\documentclass[a4paper,12pt]{article}
\usepackage[all]{xy} 
\usepackage{amssymb}
\usepackage{epsfig}
\usepackage{amsfonts}
\usepackage{amsmath}
\usepackage{euscript}
\usepackage{amscd}
\usepackage{amsthm}
\DeclareMathAlphabet{\mathpzc}{OT1}{pzc}{m}{it}

\newtheorem{thm}{Theorem}[section]
\newtheorem{lem}[thm]{Lemma}
\newtheorem{prop}[thm]{Proposition} 
\newtheorem{cor}[thm]{Corollary}
\newtheorem{rem}[thm]{Remark}
\newtheorem{ex}[thm]{Example}

\newcommand{\m}{\mathpzc{m}}
\newcommand{\p}{\mathpzc{p}}

\newcommand{\bQ}{\mathbb Q}

\newcommand{\bC}{\mathbb C}

\newcommand{\hgt}{\operatorname{ht}}

\setlength{\textwidth 6.6in} \setlength{\textheight 8.8in}
\voffset -0.7in \hoffset -0.6in
\newcommand{\noi}{\noindent}

\newtheorem{Q}[thm]{Question}
\numberwithin{equation}{section}

\newcommand{\q}{\mathfrak{q}}
\newcommand{\n}{\mathfrak{n}}


\begin{document}


\title{On finite generation of Noetherian algebras
over two-dimensional regular local rings}
\author{ Amartya Kumar Dutta$^\star$, Neena Gupta$^{\star\star}$ and Nobuharu Onoda$^\dagger$\footnote{{Present address: Fukui Study Center, The Open University of
Japan, AOSSA 7F, Teyose 1-4-1, Fukui 910-0858, Japan}}\\
{\small{\it $^\star$, $^{\star\star}$ Stat-Math Unit, Indian Statistical Institute,}}\\
{\small{\it 203 B.T. Road, Kolkata 700 108, India.}}\\
{\small{\it e-mail : amartya.28@gmail.com and neenag@isical.ac.in}}\\
{\small{\it $^\dagger$Department of Mathematics, University of Fukui,}}\\
{\small{\it Bunkyo 3-9-1, Fukui 910-8507, Japan}}\\
{\small{\it e-mail :  onoda@u-fukui.ac.jp}}
}
\maketitle
\date{}


\begin{abstract}
Let $R$ be a complete regular local ring with an algebraically closed residue
field and let $A$ be a Noetherian $R$-subalgebra of the polynomial
ring $R[X]$. It has been shown in \cite{DO2} that if $\dim R=1$, then 
$A$ is necessarily finitely generated over $R$. In this paper, we 
give necessary and  sufficient conditions for $A$ 
to be finitely generated over $R$ when $\dim R=2$ and 
present an example of a Noetherian normal non-finitely 
generated $R$-subalgebra of $R[X]$ 
over $R= \bC[[u, v]]$.

\smallskip

\noindent
{\small {{\bf Keywords} Finite generation, subalgebra of polynomial algebra, 
dimension formula, Nagata ring, complete local ring, regular local ring, Krull domain, excellent ring.}}

\noindent
{\small {{\bf 2010 MSC}. Primary:\,13E15, Secondary:\,13F20, 13J10, 13H05}}

\end{abstract}


%
%

\section{Introduction}
Let $R$ be a Noetherian ring and $A$ 
a Noetherian $R$-subalgebra of $R[X]$, where $R[X]$ is the polynomial
ring in one indeterminate $X$ over $R$.
Then $A$ need not be a finitely generated $R$-algebra, in general. 
In fact, by an example of Eakin \cite[p. 79]{Ea},  
even when $R$ is the polynomial ring $\bC[t]$, 
there exist non-finitely generated
Noetherian rings $A$ satisfying $R \subseteq A\subseteq R[X]$. 
However, it has been shown in \cite{DO2}
that when $R$ is a complete discrete valuation ring 
with an algebraically closed residue field
(e.g., when $R= \bC[[t]]$) then any Noetherian $R$-subalgebra $A$ of $R[X]$
must be finitely generated.
A precise version of the result is quoted below (\cite[Theorem 4.2]{DO2}).

\begin{thm}\label{thmB}
Let $(R,\pi)$ be a complete discrete valuation ring 
with residue field $k$ and field of fractions $K$.
Suppose that the algebraic closure $\bar{k}$ of $k$ 
is a finite extension of $k$. Let $A$ be a Noetherian domain containing
$R$ such that $A[\pi^{-1}]$ is a finitely generated $K$-algebra and
${\rm tr.deg}_R A = 1$. Then $A$ is finitely generated over $R$. 
In particular, if $A$ is a Noetherian $R$-subalgebra of $R[X]$, then $A$ is 
finitely generated over $R$.
\end{thm}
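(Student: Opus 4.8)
The plan is to reduce the whole theorem to a single finiteness statement: that $A$ is a finite module over some finitely generated $R$-subalgebra. First I would dispose of the case where $\pi$ is a unit in $A$; then $A \supseteq R[\pi^{-1}] = K$ and $A$ is finitely generated over $K$, which is finitely generated over $R$, so there is nothing to prove. Hence assume $\pi$ is a nonunit nonzerodivisor, so that $\bigcap_n \pi^n A = 0$ by Krull's intersection theorem for the Noetherian domain $A$. Write $A' := A[\pi^{-1}]$; it is finitely generated over $K$ with $\text{tr.deg}_K A' = \text{tr.deg}_R A = 1$, hence an affine $K$-domain of dimension $1$. Choosing $K$-algebra generators of $A'$ inside $A$ and clearing $\pi$-denominators yields a finitely generated $R$-subalgebra $B_0 \subseteq A$ with $B_0[\pi^{-1}] = A'$ and $\Frac(B_0) = \Frac(A) =: L$. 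The point of the reduction is that it suffices to enlarge $B_0$ to a finitely generated $R$-subalgebra $B$ over which $A$ is module-finite: then $A$ is generated over $R$ by $B$ together with a finite module basis, hence finitely generated.

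Given this reduction, the mechanism I would use to produce $B$ is the associated graded ring for the $\pi$-adic filtration. Suppose we know — this is the hard input, discussed below — that the closed fibre $A/\pi A$ is a finitely generated algebra over $k := R/\pi R$. Lifting a finite set of $k$-algebra generators of $A/\pi A$ into $A$ and adjoining them to $B_0$, I obtain a finitely generated $R$-subalgebra $B$ with $B[\pi^{-1}] = A'$ and with $B \twoheadrightarrow A/\pi A$; this surjectivity says precisely $A = B + \pi A$. Multiplying by $\pi^i$ and using $\pi \cdot (\pi^i A/\pi^{i+1}A) = \pi^{i+1}A/\pi^{i+2}A$, one checks that $\gr_\pi B \to \gr_\pi A$ is surjective, i.e. $\gr_\pi A$ is a cyclic $\gr_\pi B$-module, and therefore a finite $\gr_\pi B$-module since $B$ is Noetherian. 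Passing to $\pi$-adic completions $\hat A, \hat B$, which have the same associated graded rings, and using that $\hat A$ is $\pi$-adically separated and complete, the standard "finiteness of $\gr$ lifts to finiteness" lemma shows $\hat A$ is a finite $\hat B$-module. Because the generic fibres already agree, $A[\pi^{-1}] = B[\pi^{-1}]$, a descent argument along the faithfully flat cover $\Spec \hat B \sqcup \Spec B[\pi^{-1}] \to \Spec B$ (gluing the formal and generic loci, matching $A$ on each) then yields that $A$ is module-finite over $B$. It is here that completeness of $R$ is indispensable.

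The genuine obstacle is the input used above: that $A/\pi A$ is finitely generated as a $k$-algebra. Noetherianity of $A$ alone does not suffice, since a one-dimensional Noetherian $k$-algebra need not be of finite type (e.g. $k[[t]]$), and it is exactly at this point that Eakin's non-complete counterexamples break down. Here I would exploit both the completeness of $R$ and the hypothesis $[\bar k : k] < \infty$. By Krull's principal ideal theorem there are only finitely many height-one primes $\mathfrak p_1, \dots, \mathfrak p_s$ of $A$ containing $\pi$, each contracting to the maximal ideal $\pi R$; these are the generic points of the components of the closed fibre. Passing to the normalization (a Krull domain, and Noetherian since $\dim A \le 2$), localizing at the height-one primes over $\pi$ and completing, I would read off the local structure over the complete base $R$ via the Cohen structure theorem. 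By Artin–Schreier the hypothesis forces $k$ to be algebraically closed or real closed, so the finite extensions of $k$ are severely restricted and the closed points of the special fibre are rational over a fixed finite extension of $k$; this rigidity is what lets the local data at the $\mathfrak p_i$ be assembled into a finite-type description of $A/\pi A$, preventing components of uncontrolled arithmetic complexity from appearing. I expect this special-fibre analysis to be the main technical burden of the proof.

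Finally, for the "in particular" clause, let $A$ be a Noetherian $R$-subalgebra of $R[X]$. If $\text{tr.deg}_R A = 0$, every element of $A$ is algebraic over $K$ and lies in $R[X]$, hence in $K \cap R[X] = R$, so $A = R$ is trivially finitely generated. Otherwise $\text{tr.deg}_R A = 1$, so $A[\pi^{-1}]$ is a $K$-subalgebra of $R[X][\pi^{-1}] = K[X]$ containing a nonconstant polynomial; then $K[X]$ is module-finite over $A[\pi^{-1}]$, and by the Artin–Tate lemma $A[\pi^{-1}]$ is finitely generated over $K$. Thus the hypotheses of the theorem are met and the conclusion follows.
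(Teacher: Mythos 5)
First, note that the paper does not prove this statement itself: Theorem~\ref{thmB} is quoted verbatim from \cite[Theorem 4.2]{DO2}, and the present paper only builds on it (Propositions~\ref{3.4} and \ref{4.2} generalise it, and their proofs ultimately reduce back to the cited result). So your proposal can only be measured against what such a proof must contain, as reflected in the machinery the paper does develop (Lemmas~\ref{easy}, \ref{cohen}, \ref{lem2}, Theorem~\ref{onoda}, Propositions~\ref{3.4} and \ref{toya}).

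There is a genuine gap, and you have located it yourself: the claim that $A/\pi A$ is a finitely generated $k$-algebra is asserted as ``the hard input'' and then only gestured at (normalisation, localise at the height-one primes over $\pi$, complete, Cohen structure theorem, ``rigidity''). This claim is essentially the whole theorem, not a lemma one can defer. For a height-one prime $P\supseteq\pi A$ with ${\rm tr.deg}_k\,A/P=1$, the quotient $A/P$ is a Noetherian domain of transcendence degree one over $k$, and such rings need not be of finite type over $k$; proving that these particular quotients are of finite type is exactly the content of \cite[Proposition 3.4]{DO2} (Theorem~\ref{prop34} here), and it is a local-to-global argument resting on Theorem~\ref{onoda}, which your sketch never invokes. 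Knowing the completed local rings at the closed points of the special fibre, via Cohen's structure theorem, does not assemble into global finite generation --- that is precisely the local-to-global difficulty. Moreover, the actual role of the hypothesis $[\bar k:k]<\infty$ is not the ``rigidity of closed points'' you describe: it enters in the complementary case ${\rm tr.deg}_k\,A/P=0$, where a domain algebraic over $k$ is a field inside $\bar k$ and hence a \emph{finite} $k$-module, so that completeness of $R$ and \cite[Theorem 8.4]{M} force the relevant local rings of $A$ to be finite $R$-modules (this is the one-dimensional antecedent of Lemma~\ref{lem2} and Proposition~\ref{lNoeth}(1)). Your sketch conflates these two cases and supplies an argument for neither.

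A secondary, smaller concern: your gluing step (from $\gr_\pi B\twoheadrightarrow\gr_\pi A$ and $B[\pi^{-1}]=A[\pi^{-1}]$ to $A$ module-finite over $B$) is in the right spirit --- it is what Lemma~\ref{easy}, Lemma~\ref{cohen} and Proposition~\ref{toya} accomplish --- but as stated it has a circularity: Beauville--Laszlo descent identifies the glued module with $A$ only if one already knows $\widehat A\cong A\otimes_B\widehat B$, which is what finiteness of $A$ over $B$ would give. The paper's route avoids this by working with the explicit criteria $tA\cap B=tB$ (Lemma~\ref{easy}) and the locality argument in Proposition~\ref{toya}. Your treatment of the ``in particular'' clause and the reduction of $A[\pi^{-1}]$ via Artin--Tate is correct and matches Lemma~\ref{integral}.
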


Over a complete discrete valuation ring, the following result 
(\cite[Proposition 3.4]{DO2}) relates the finite 
generation of a Noetherian normal $R$-subalgebra of $R[X]$
with the transcendence degree of certain fibres. 

\begin{thm}\label{prop34}
Let $(R,\pi)$ be a complete discrete valuation ring 
with residue field $k$ and field of fractions $K$.
Let $A$ be a Krull domain such that $R\subseteq A$,  
$A[\pi^{-1}]$ is a finitely generated $K$-algebra and
${\rm tr.deg}_R A = 1$. Then $A$ is finitely generated over $R$ if 
${\rm tr.deg}_{k}\, A/P >0$
for each associated prime ideal $P$ of $\pi A$. 
\end{thm}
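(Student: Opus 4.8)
The plan is to show that $A$ is module-finite over a suitable finitely generated normal $R$-subalgebra, with the entire difficulty concentrated over $\pi$, and then to conclude finite generation. First I would produce a finitely generated normal model below $A$. Since $A[\pi^{-1}]$ is a finitely generated $K$-algebra, each of its algebra generators has the form $a_j/\pi^{m_j}$ with $a_j\in A$; putting $B_0=R[a_1,\dots,a_s]$ and letting $B$ be the integral closure of $B_0$ in $L:=\Frac(A)$, the ring $B$ is a finitely generated normal $R$-subalgebra of $A$ (finite over $B_0$ because $R$, being complete local, is a Nagata ring), and $B[\pi^{-1}]=A[\pi^{-1}]$. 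As $A$ is a Krull domain one has the decomposition
\[
A=B[\pi^{-1}]\cap\bigcap_{P\in\Ass(\pi A)}A_P ,
\]
where the associated primes $P$ of $\pi A$ are finite in number and each $A_P$ is a discrete valuation ring; away from $\pi$ the height-one primes of $A$ and of $B$ coincide, with equal localizations.

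Second, I would pin down the valuations $v_P$ for $P\in\Ass(\pi A)$ by combining two classical inequalities. Since $R$ is complete, hence excellent and universally catenary, the dimension formula applies to the finitely generated domain $B$: writing $Q:=P\cap B$ (so $Q\cap R=\pi R$) it gives $\hgt Q+\td_k\kappa(Q)=2$, forcing $\hgt Q\in\{1,2\}$. On the other hand $A_P$ is a rank-one discrete valuation ring dominating the Noetherian local domain $B_Q$, so Abhyankar's inequality yields $\td_{\kappa(Q)}\kappa(P)\le\hgt Q-1$. Adding these, and using additivity of transcendence degree along $k\subseteq\kappa(Q)\subseteq\kappa(P)$, gives $\td_k A/P=\td_k\kappa(P)\le 1$. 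The hypothesis $\td_k A/P>0$ then forces equality throughout: $\td_k A/P=1$ and $\td_{\kappa(Q)}\kappa(P)=\hgt Q-1$. In other words every $v_P$ is an Abhyankar, hence divisorial, valuation of $L/k$, centred on a prime $Q$ of $B$ of height $1$ or $2$.

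Third, since the valuations $v_{P}$ are finitely many and each is divisorial, I would realize them simultaneously on a finitely generated normal model. Adjoining to $B$ finitely many elements of $A$ cutting out these divisors (equivalently, blowing up the finitely many height-two centres in $\Spec B$ and normalizing, which stays of finite type because $R$ is Nagata), I obtain $B\subseteq B'\subseteq A$ with $B'$ finitely generated and normal, $B'[\pi^{-1}]=A[\pi^{-1}]$, and each $P$ contracting to a height-one prime of $B'$, so that $A_P=B'_{P\cap B'}$. Thus $A=B'[\pi^{-1}]\cap\bigcap_{P}B'_{P\cap B'}$.

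Finally, I would prove $A=B'$, and I expect this to be the main obstacle. The only possible discrepancy comes from height-one primes of $B'$ over $\pi$ that are \emph{not} of the form $P\cap B'$; the content is to show that such ``extra'' special-fibre components are either dominated by $A$ (and so do not enlarge the intersection) or can be inverted away, so that $A$ is module-finite over a finitely generated subalgebra and therefore finitely generated over $R$. Controlling the special fibre of the constructed model so that no essential valuation of $A$ is lost is exactly where the hypothesis $\td_k A/P>0$ is indispensable: it is what makes every $v_P$ divisorial and hence realizable on a finite model, whereas a component with $\td_k A/P=0$ would yield a non-divisorial essential valuation that no finitely generated model can capture, obstructing finite generation (consistently, the dimension formula shows $\td_k A/P=1$ is \emph{forced} once $A$ is known to be finitely generated). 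Alternatively, once the argument shows $A$ to be Noetherian, finite generation follows at once from Theorem~\ref{thmB}.
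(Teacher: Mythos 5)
Your steps 1 and 2 are sound: the finitely generated normal model $B\subseteq A$ with $B[\pi^{-1}]=A[\pi^{-1}]$, the Krull decomposition $A=A[\pi^{-1}]\cap\bigcap_P A_P$, and the computation forcing ${\rm tr.deg}_k\, A/P=1$ together with the dimension equality are all correct (the paper obtains the latter more directly from the dimension inequality applied to $R\subseteq A$, as in Lemma \ref{DE}, without passing through $B$ and Abhyankar's inequality). Step 3, however, is not justified as written: the affine charts of the blow-up of a height-two centre consist of fractions $y/x$ that lie in $A_P$ but need not lie in $A$ (they may have poles along other height-one primes of $A$), so there is no reason the resulting model sits between $B$ and $A$. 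The fix is much simpler and needs no valuation theory: for each $P$ adjoin to $B$ an element $t_P\in A$ whose image in $A/P$ is transcendental over $k$ (exactly what the hypothesis supplies) and normalize; then ${\rm tr.deg}_k\, B'/(P\cap B')\ge 1$ forces $\hgt (P\cap B')=1$ by the dimension formula, and $B'_{P\cap B'}$ is a DVR contained in $A_P$ with the same fraction field, hence equal to it.

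The genuine gap is step 4, which you yourself flag as ``the main obstacle'' and then do not carry out; this is precisely the content of the theorem, not a loose end. After step 3 one has $A=B'[\pi^{-1}]\cap\bigcap_P B'_{P\cap B'}$, whereas $B'=B'[\pi^{-1}]\cap\bigcap_{\q}B'_{\q}$ over \emph{all} minimal primes $\q$ of $\pi B'$; when extra fibral components are present, $A$ is the ring of functions regular off a vertical divisor on the normal surface $\Spec B'$, and finite generation of such intersections is a substantive theorem, not an automatic consequence of the setup. You offer no argument that the extra components ``do not enlarge the intersection or can be inverted away.'' The paper's machinery (the proof of Theorem \ref{Main} via Proposition \ref{toya}, and the proof in \cite{DO2} which this paper cites for the present statement) closes exactly this gap by additionally proving that $A/\pi A$ is a finitely generated $k$-algebra (via Lemma \ref{lemma_add}, reducing to each $A/P$) and then running the completion/local--global argument of Proposition \ref{toya}; you establish neither ingredient nor a substitute. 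Finally, your fallback ``once $A$ is Noetherian, apply Theorem \ref{thmB}'' is doubly unavailable: Noetherianity of $A$ is not proved, and Theorem \ref{thmB} assumes $[\bar k:k]<\infty$, a hypothesis absent from the present statement.
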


In this paper, we explore two-dimensional analogues of the above results.
Note that the integral domain $A$ is flat over the discrete valuation ring $R$ in Theorem \ref{thmB} and 
this leads us to the following question:

\begin{Q}\label{q1}
{\rm 
Let $R$ be a complete regular local ring of 
dimension two with algebraically closed residue field. Let $A$  
be a Noetherian  $R$-subalgebra of $R[X]$. Is $A$ 
finitely generated over $R$, say when $A$ is normal and flat over $R$?
}
\end{Q}

Recall that if the ring $A (\neq R)$ in the above question 
is factorial, then $A \cong_R R[X]$
by a result of Abhyankar-Eakin-Heinzer \cite[Theorem 4.1]{AEH}. 

Example \ref{ex1}, the main example of this paper, provides 
a counter example to the above question.
However, our main theorem (Theorem \ref{Main}) shows that 
under an additional fibre condition  on $A$ 
(similar to the criterion in Theorem \ref{prop34}),
we do have a two-dimensional analogue of Theorem \ref{thmB}. 
We quote a consequence of our main result (cf. Theorem \ref{Main} and 
Corollary \ref{rx}):

\medskip

\noi
{\bf Theorem I.}
{\it Let $(R, \m)$ be a complete two-dimensional regular local ring
with residue field $k$. Suppose that the algebraic closure
$\bar{k}$ of $k$ is a finite extension of $k$. 
Let $A$ be a Noetherian domain such that $A$ is a flat $R$-algebra
with ${\rm tr.deg}_R A = 1$. 
Suppose that there exists $\pi \in \m$ such that $A[\pi^{-1}]$ 
is a finitely generated $R[\pi^{-1}]$-algebra
and ${\rm tr.deg}_{R/(P\cap R)}\, A/P >0$  
for each $P\in {\rm Ass}_A(A/\pi A)$.  
Then $A$ is finitely generated over $R$.
}

\medskip

Regarding Question \ref{q1},
it has been shown in \cite[Lemma 3.3]{DO2} that 
over a one-dimensional Noetherian domain $R$, 
any Krull domain which is an $R$-subalgebra of $R[X]$ is Noetherian.
We give an example (Example \ref{ex2}) to show that 
this result does not hold when
${\rm dim}~R>1$, not even when $R$ is a complete regular local 
domain with an algebraically closed residue field.

We now give a layout of the paper. In Section 2, we recall 
a few results which we shall use to prove our statements.
In Section 3, as a step towards Theorem I, we first prove 
generalisations of Theorems \ref{thmB} and \ref{prop34} 
(Propositions \ref{4.2} and \ref{3.4}). 
Then we consider the case where the base ring $R$ is a two-dimensional
Noetherian local domain, and establish a few criteria, including Theorem I,
for Noetherian property 
and finite generation of $R$-subalgebras of 
a finitely generated $R$-algebra. 
We also give a sufficient condition for the ring $A$ of Theorem I to be 
Noetherian under some fibre conditions on $A$ (Theorem \ref{noeth}).
In Section 4, we demonstrate our examples. These examples are based on
the methodology of Lemmas \ref{l1}--\ref{l3}. In Appendix, 
we establish a condition for finite generation of
algebras over excellent rings. This result was earlier established by 
the third author in \cite{Ot},
over fields and the proof is essentially the same.

\section{Preliminaries}
Throughout the paper $R$ will denote a commutative ring with unity. The
notation $A= R^{[n]}$ will denote that $A$ is 
a polynomial ring in $n$ variables over $R$.  For an element $c$ in $A$, the 
notation $A_c$ will denote the ring $T^{-1}A$, where $T$ is the multiplicatively closed set $\{c^n~|~n \ge 0\}$.

\smallskip

\noindent
{\bf Definition}. A Noetherian ring $R$ is said to be a Nagata ring 
(or a pseudo-geometric ring) if, 
for every prime ideal $\p$ of $R$ and for every finite algebraic extension 
field $L$ of the field of fractions $k(\p)$ of $R/\p$, 
the integral closure of $R/\p$ in $L$ is a finite module over $R/\p$.

Any Noetherian complete local ring is a Nagata ring (\cite[p. 234, Corollary 2]{Ma})
and any finitely generated algebra over a Nagata ring is a Nagata ring (\cite[p. 240, Theorem 72]{Ma}).

\smallskip

We first recall the following version of dimension 
inequality (cf. \cite[Theorem 2]{C} 
\begin{thm}\label{C}
Let $R$ be a Noetherian integral domain and $B$ an integral domain
containing $R$. Let $P$ be a prime ideal of $B$ and $\p= P \cap  R$.
Then 
\begin{equation}\label{de}
\hgt P + {\rm tr.deg}_{R/\p} \, B/P \le \hgt \p + {\rm tr.deg}_R \, B.
\end{equation}
\end{thm}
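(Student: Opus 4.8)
The statement is the classical dimension inequality, and the plan is to prove it by reducing successively to the simplest possible situation. First I would dispose of the trivial case $\td_R B = \infty$, for which there is nothing to prove, and so assume $\td_R B$ finite. The core of the argument is the \emph{one-generator case} $B = R[x]$; from it the general case follows by two reductions: a reduction to the case where $B$ is a finitely generated $R$-algebra, followed by an induction on the number of generators.

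For the one-generator case I would write $B = R[x] = R[T]/I$, where $R[T]$ is a polynomial ring and $I = \ker(R[T]\to B)$ is a prime ideal with $I \cap R = 0$ (since $R \hookrightarrow B$). If $I = 0$, then $B = R[T]$ and, for the prime $P$ lying over $\p$, the precise height formula for primes of a polynomial ring over a Noetherian domain gives $\hgt P = \hgt \p$ with $\td_{R/\p} B/P = 1$ when $P = \p R[T]$, and $\hgt P = \hgt \p + 1$ with $\td_{R/\p} B/P = 0$ otherwise; since $\td_R R[T] = 1$, the inequality holds with equality in both subcases. If $I \neq 0$, then $I$ is a nonzero prime contracting to $(0)$ in $R[T]$, hence $\hgt I = 1$ and $\td_R B = 0$. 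Writing $Q \supseteq I$ for the prime of $R[T]$ corresponding to $P$, I would combine the equality just obtained for $(R[T], Q)$, namely $\hgt Q + \td_{R/\p}(R[T]/Q) = \hgt \p + 1$, with the elementary chain inequality $\hgt(Q/I) + \hgt I \le \hgt Q$ (obtained by concatenating saturated chains) together with the identification $B/P = R[T]/Q$; as $\hgt_B P = \hgt(Q/I)$, this yields $\hgt_B P + \td_{R/\p} B/P \le (\hgt Q - 1) + \td_{R/\p}(R[T]/Q) = \hgt \p$, as required.

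To pass to a finitely generated $B = R[x_1,\dots,x_n]$, I would induct along the tower $R \subseteq R[x_1] \subseteq \cdots \subseteq B$, each intermediate ring being Noetherian by the Hilbert basis theorem. Applying the one-generator inequality to the top step and the inductive hypothesis to the bottom, and adding, the intermediate height cancels while the two residue transcendence degrees add to $\td_{R/\p} B/P$ and the two relative transcendence degrees add to $\td_R B$ (additivity of transcendence degree in a tower of fields), giving the stated inequality. Finally, to reduce an arbitrary $B$, I would fix a finite chain $P_0 \subsetneq \cdots \subsetneq P_h = P$ in $B$ together with finitely many elements of $B/P$ whose images are algebraically independent over $R/\p$, choose for each step an element of $P_i \setminus P_{i-1}$, and let $B_0$ be the $R$-subalgebra generated by these finitely many elements together with lifts of the chosen transcendence basis. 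Then the contracted chain stays strict, so $\hgt_{B_0}(P \cap B_0) \ge h$, the chosen elements remain algebraically independent over $R/\p$ inside $\Frac(B_0/(P\cap B_0))$, and $\td_R B_0 \le \td_R B$; applying the finitely generated case to $(B_0, P \cap B_0)$ and taking the supremum over all such chains and transcendence bases yields the inequality for $B$.

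I expect the main obstacle to be the bookkeeping in the one-generator polynomial subcase, where one must invoke the exact height formula $\hgt Q \in \{\hgt\p,\ \hgt\p+1\}$ for primes of $R[T]$ and correctly pair each value with the corresponding residue transcendence degree; the reduction to the finitely generated case, while routine, also requires care to ensure that the height data and the transcendence data survive contraction to $B_0$ simultaneously.
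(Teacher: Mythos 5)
The paper gives no proof of this statement: Theorem \ref{C} is quoted as a known classical result, with a citation to Cohen \cite{C} (it is also \cite[Theorem 15.5]{M}). Your argument is correct and is essentially the standard textbook proof of that cited result: the flat dimension formula settles the polynomial-ring case $B=R[T]$ with equality, the chain inequality $\hgt I+\hgt(Q/I)\le \hgt Q$ handles a one-generator quotient, additivity of $\hgt$ and of $\operatorname{tr.deg}$ along the tower $R\subseteq R[x_1]\subseteq\cdots\subseteq B$ gives the finitely generated case, and the passage to arbitrary $B$ by contracting a finite chain and a finite algebraically independent set to a finitely generated subalgebra $B_0$ is sound. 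There is nothing in the paper to compare it against, and I see no gap.
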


Let the notation and assumptions be the same as in Theorem \ref{C}. 
Then we say that $P$  satisfies the dimension equality relative to $R$
if the equality holds in (\ref{de}), and 
we say that the dimension formula holds between $R$ and $B$ if every
prime ideal $P$ in $B$ satisfies the dimension equality relative to $R$.
It is known that if $R$ is universally catenary
and $B$ is finitely generated over $R$, then the dimension formula
holds between $R$ and $B$ (cf. \cite[Theorem 15.6]{M}). For later use
we note the following.

\begin{lem}\label{DE}
Let $R\subseteq B$ be integral domains such that $R$ is Noetherian. 
Let $P$ be a prime ideal in $B$
and let $\p=P\cap R$. Suppose that $\hgt \p=1$. 
Then ${\rm tr.deg}_{R/\p} B/P \leq{\rm tr.deg}_RB$, where the equality holds 
if and only if $\hgt P=1$ and 
$P$ satisfies the dimension equality relative to $R$.
In particular, the equality holds if $\hgt P=1$, $R$ is universally catenary,
and $B$ is finitely generated over $R$.
\end{lem}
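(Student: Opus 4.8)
The plan is to derive everything directly from the dimension inequality of Theorem~\ref{C}. The key preliminary observation is that, since $\hgt\p=1$, we have $\p\neq 0$, and as $P\cap R=\p$ this forces $P\neq 0$; because $B$ is a domain, it follows that $\hgt P\geq 1$. This single inequality is what upgrades Theorem~\ref{C} into the sharp statement we want. Indeed, putting $\hgt\p=1$ into~(\ref{de}) and using $\hgt P\geq 1$ gives the chain
\[
1+\td_{R/\p}B/P\ \le\ \hgt P+\td_{R/\p}B/P\ \le\ 1+\td_R B,
\]
where the left inequality comes from $\hgt P\geq 1$ and the right is Theorem~\ref{C}. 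Cancelling the outer summands yields $\td_{R/\p}B/P\le\td_R B$, the asserted inequality.

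For the equality criterion I would simply track when the two inequalities in the chain become equalities. The two extreme terms coincide exactly when $\td_{R/\p}B/P=\td_R B$, which happens if and only if both displayed inequalities are equalities. The first is an equality precisely when $\hgt P=1$; and, granting $\hgt P=1$, the second reads $\hgt P+\td_{R/\p}B/P=\hgt\p+\td_R B$, which is by definition the assertion that $P$ satisfies the dimension equality relative to $R$. This establishes the stated equivalence.

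The final clause is then immediate: if $R$ is universally catenary and $B$ is finitely generated over $R$, the dimension formula holds between $R$ and $B$ (cf.~\cite[Theorem~15.6]{M}), so every prime satisfies the dimension equality; combined with the hypothesis $\hgt P=1$, the criterion just proved yields equality. Since each step is a direct manipulation of Theorem~\ref{C}, no serious obstacle arises; the only point requiring care is the initial claim $\hgt P\geq 1$, which is exactly what makes the sandwich tight and is therefore the crux of the argument.
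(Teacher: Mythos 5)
Your argument is correct and follows essentially the same route as the paper's proof: a direct application of Theorem~\ref{C} with $\hgt\p=1$, sharpened by the observation that $\hgt P\geq 1$. In fact you make explicit the step $\p\neq 0\Rightarrow P\neq 0\Rightarrow\hgt P\geq 1$, which the paper's chain of inequalities uses only implicitly.
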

\begin{proof}
Since $\hgt \p=1$, we have 
\[
{\rm tr.deg}_{R/\p} B/P \leq 1-\hgt P +{\rm tr.deg}_RB \leq
{\rm tr.deg}_RB
\]
by Theorem \ref{C}. From this it follows that 
${\rm tr.deg}_{R/\p} B/P \leq {\rm tr.deg}_RB$, and 
the equality holds if and only if
$\hgt P=1$ and $P$ satisfies the dimension equality relative to $R$. 
\end{proof}

For convenience, we now quote a few other known results which will be needed in our arguments.
We first state an easy lemma.

\begin{lem}\label{easy}
Let $B \subseteq A$ be integral domains. Suppose that there exists a nonzero
element $t$ in $B$ such that $B[t^{-1}] = A[t^{-1}]$ and $tA\cap B=tB$. Then $B = A$.
In particular, if $t$ is a nonzero prime element in $B$, then (by letting
$A=B[t^{-1}]\cap B_{(tB)}$), we have $B=B[t^{-1}]\cap B_{(tB)}$.
\end{lem}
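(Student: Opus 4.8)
The plan is to establish the first assertion by a descent argument on the smallest power of $t$ needed to clear denominators, and then to deduce the ``in particular'' statement by checking the two hypotheses for the concrete overring $A = B[t^{-1}] \cap B_{(tB)}$.

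For the main statement I would fix an arbitrary $a \in A$ and show $a \in B$. Since $B[t^{-1}] = A[t^{-1}]$, the element $a$ lies in $B[t^{-1}]$, so $t^n a \in B$ for some integer $n \geq 0$. I would take $n$ to be minimal with this property and argue that $n = 0$. Assuming $n \geq 1$, set $b := t^n a \in B$; since $t^{n-1} a \in A$, one has $b = t\cdot(t^{n-1}a) \in tA$, hence $b \in tA \cap B = tB$. Writing $b = tb'$ with $b' \in B$ gives $t(t^{n-1}a - b') = 0$, and cancelling $t$ (valid as $A$ is a domain and $t \neq 0$) yields $t^{n-1}a = b' \in B$, contradicting minimality of $n$. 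Thus $n=0$ and $a \in B$, so $B = A$.

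For the ``in particular'' part I would put $A = B[t^{-1}] \cap B_{(tB)}$, a domain containing $B$ as an intersection of overrings of $B$ inside $\Frac(B)$. From $B \subseteq A \subseteq B[t^{-1}]$ one gets $A[t^{-1}] = B[t^{-1}]$ after localizing, so the first hypothesis holds. For the second I would verify $tA \cap B = tB$: the inclusion $tB \subseteq tA \cap B$ is immediate, and conversely, given $x = ta \in B$ with $a \in A$, I would write $a = c/s$ with $c \in B$ and $s \in B \setminus tB$ (using $a \in B_{(tB)}$), so that $sx = tc \in tB$; since $t$ is prime, $tB$ is a prime ideal and $s \notin tB$ forces $x \in tB$. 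The first part of the lemma then gives $B = A = B[t^{-1}] \cap B_{(tB)}$.

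The argument is entirely elementary and I do not anticipate a genuine obstacle; the proof is really just the descent step combined with cancellation in a domain. The only point requiring a moment's care is the verification of the contraction condition $tA \cap B = tB$ in the special case, where one must invoke precisely the primality of $t$ to pass from $sx \in tB$ with $s \notin tB$ to $x \in tB$.
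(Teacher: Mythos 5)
Your proof is correct and complete; the paper itself states Lemma \ref{easy} without proof (labelling it ``an easy lemma''), and your descent on the minimal $n$ with $t^n a\in B$, together with the cancellation in the domain $A$ and the primality of $tB$ for the contraction condition $tA\cap B=tB$, is exactly the routine argument the authors leave to the reader.
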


The following result, giving a criterion for an integral domain to be Noetherian, 
is proved in \cite[Lemma 2.8]{DO2}. 

\begin{lem}\label{cohen}
Let $D$ be an integral domain. Suppose that there exists a nonzero element $t$ in $D$ such that
\begin{enumerate}
\item[{\rm (I)}] $D[t^{-1}]$ is a Noetherian ring.
\item[{\rm (II)}] $tD$ is a maximal ideal of $D$.
\item[{\rm (III)}] $\hgt (tD)=1$
(or, equivalently, $\displaystyle{\bigcap_{n \ge 1}}\,t^nD=(0)$).
\end{enumerate}
Then $D$ is a Noetherian ring.
\end{lem}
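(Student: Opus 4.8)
The plan is to verify the Noetherian property through Cohen's theorem, by which $D$ is Noetherian as soon as every prime ideal of $D$ is finitely generated. I would therefore fix a prime ideal $P\subseteq D$ and split the analysis according to whether or not $t\in P$. If $t\in P$, then $P\supseteq tD$, and since $tD$ is maximal by (II) we get $P=tD=(t)$, which is finitely generated. Thus the entire content lies in the primes $P$ with $t\notin P$.

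For such $P$, the extension $PD[t^{-1}]$ is a prime of the Noetherian ring $D[t^{-1}]$ by (I), hence finitely generated; since $t$ is a unit in $D[t^{-1}]$, I may clear denominators and choose generators $a_1,\dots,a_n\in P$ with $PD[t^{-1}]=(a_1,\dots,a_n)D[t^{-1}]$. The naive guess $P=(a_1,\dots,a_n)D$ is precisely what must be proved, and this is where condition (III) enters. If $P=(0)$ there is nothing to do; otherwise I claim $P\not\subseteq tD$. Indeed, $\hgt(tD)=1$ forces any prime strictly contained in $tD$ to be $(0)$, so a nonzero prime $P$ with $t\notin P$ cannot sit inside $tD$. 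Hence I may pick $p\in P$ with $p\notin tD$ and set $I:=(a_1,\dots,a_n,p)D\subseteq P$, noting that $ID[t^{-1}]=PD[t^{-1}]$ still holds.

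It remains to prove $P\subseteq I$, which I regard as the main obstacle. Given $x\in P$, the equality $ID[t^{-1}]=PD[t^{-1}]$ yields $t^k x\in I$ for some $k\ge 0$, while $px\in I$ holds trivially. Since $p\notin tD$ and $tD$ is maximal, $p$ is invertible modulo $tD$, so $(p,t)=D$ and hence $(p,t^k)=D$; writing $up+wt^k=1$ with $u,w\in D$ gives $x=u(px)+w(t^k x)\in I$, so $P=I$ is finitely generated. Conceptually this comaximality step is the ideal-theoretic shadow of the decomposition $D=D[t^{-1}]\cap D_{(tD)}$ supplied by Lemma \ref{easy} (valid because $t$ is prime, $tD$ being maximal): the generators $a_i$ control $P$ away from $tD$, while the adjoined unit $p$ controls it at $tD$. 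The only delicate point is the production of this element $p$, for which condition (III) is indispensable; conditions (I) and (II) are used exactly as above, so all three hypotheses are genuinely needed.
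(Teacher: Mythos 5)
Your argument is correct and complete: the case split on whether $t\in P$, the production of $p\in P\setminus tD$ from $\hgt(tD)=1$, and the comaximality $(p,t^k)=D$ (which does follow from $(p,t)=D$ since $1=(up+wt)^k\in(p,t^k)$) together show every prime of $D$ is finitely generated, so Cohen's theorem applies. The paper itself does not prove this lemma but only cites \cite[Lemma 2.8]{DO2}; your proof is the standard Cohen-theorem argument such statements receive there, so there is nothing to fault and no genuinely different route to compare.
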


For ready reference, we state below a comaximality criterion for a ring to be Noetherian, or an algebra to be
finitely generated, which can be proved easily.

\begin{lem}\label{noethl}
Let $a$ and $b$ be two regular elements of a ring $B$ such that $(a,b)B=B$. 
Then the following statements hold.
\begin{enumerate}
\item[{\rm (1)}] If $B_a$ and $B_b$ are Noetherian, then $B$ is Noetherian.   
\item[{\rm (2)}] If $B$ is an $R$-algebra such that $B_a$ and $B_b$ are finitely generated $R$-algebras,
then $B$ is a finitely generated $R$-algebra.
\end{enumerate}
\end{lem}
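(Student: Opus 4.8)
The plan is to treat both parts as local--global (gluing) statements, exploiting that the hypothesis $(a,b)B=B$ makes $a$ and $b$ a pair of comaximal elements cutting out a covering $\Spec B=D(a)\cup D(b)$. The single fact that drives everything is: if $M$ is any $B$-module with $M_a=0$ and $M_b=0$, then $M=0$. To see this I would fix $m\in M$; since $M_a=0$ there is an integer $n$ with $a^n m=0$, and since $M_b=0$ there is $\ell$ with $b^\ell m=0$. Because $(a,b)B=B$ forces $(a^n,b^\ell)B=B$ (every monomial $a^ib^j$ with $i+j=n+\ell$ lies in $(a^n,b^\ell)$, so $B=(a,b)^{n+\ell}\subseteq(a^n,b^\ell)$), I may write $1=ua^n+vb^\ell$ and conclude $m=ua^n m+vb^\ell m=0$. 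I note in passing that regularity of $a$ already gives the inclusion $B\hookrightarrow B_a$, a convenient sanity check though not strictly needed below.

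For part (1), let $I\subseteq B$ be an ideal. Since $B_a$ is Noetherian, $IB_a$ is finitely generated, so I can choose finitely many elements of $I$ whose images generate $IB_a$; likewise choose finitely many elements of $I$ generating $IB_b$ over $B_b$. Let $J$ be the finitely generated ideal of $B$ generated by all these chosen elements, so that $J\subseteq I$ with $JB_a=IB_a$ and $JB_b=IB_b$. Applying the displayed fact to $M=I/J$, whose localizations at $a$ and at $b$ both vanish, gives $I=J$. Thus every ideal of $B$ is finitely generated and $B$ is Noetherian.

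For part (2), I would first clear denominators in the given generators. Writing each $R$-algebra generator of $B_a$ as $\beta_i/a^{m_i}$ with $\beta_i\in B$ and adjoining $a^{-1}$, I get $B_a=R[\beta_1,\dots,\beta_p,a^{-1}]$; similarly $B_b=R[\gamma_1,\dots,\gamma_q,b^{-1}]$ with $\gamma_j\in B$. Fix $s,t\in B$ with $sa+tb=1$ and set $B_0=R[a,b,s,t,\beta_1,\dots,\beta_p,\gamma_1,\dots,\gamma_q]\subseteq B$, a finitely generated $R$-subalgebra. Localizing, $(B_0)_a$ contains $R$, all the $\beta_i$, and $a^{-1}$, hence $(B_0)_a=B_a$, and symmetrically $(B_0)_b=B_b$. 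Therefore the $B$-module $B/B_0$ has vanishing localizations at $a$ and $b$, so $B=B_0$ by the displayed fact, and $B$ is finitely generated over $R$.

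There is no serious obstacle here; the whole content lies in the comaximality-of-powers step, after which both parts reduce to a one-line application of the implication $M_a=M_b=0\Rightarrow M=0$. The only points needing a little care are the bookkeeping in part (1) (selecting finitely many elements of $I$ that localize to generators of $IB_a$ and $IB_b$) and, in part (2), the observation that the generators of $B_a$ and $B_b$ can be taken with numerators in $B$, so that the auxiliary ring $B_0$ is genuinely a subalgebra of $B$.
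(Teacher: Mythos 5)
Your proof is correct; the paper gives no argument for this lemma (it is dismissed as one ``which can be proved easily''), and your patching argument via $(a^n,b^\ell)B=B$ together with the vanishing criterion $M_a=M_b=0\Rightarrow M=0$ is the standard proof the authors evidently have in mind. The only cosmetic point: in part (2), $B/B_0$ is a $B_0$-module rather than a $B$-module, but since you arranged $s,t\in B_0$ so that $(a,b)B_0=B_0$, the vanishing criterion applies verbatim over $B_0$ and the argument is unaffected.
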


We state below another elementary result on finite generation
(\cite[p.\,201]{N2}).

\begin{lem}\label{integral}
Let $R$ be a Noetherian domain and $B$ an $R$-subalgebra of a finitely
generated $R$-algebra $C$. 
If $C$ is integral over $B$, then $B$ is finitely generated over $R$.
\end{lem}

For a proof of the following result on finite generation, see \cite[2.1]{G} or \cite[Proposition 2.11]{O}.
\begin{prop}\label{giral}
Let $R$ be a Noetherian domain and $A$ a subalgebra of a finitely generated $R$-algebra.
Then there exists a nonzero element $f$ in $A$ such that $A[f^{-1}]$ is a finitely generated $R$-algebra.
\end{prop}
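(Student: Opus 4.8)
The plan is to reduce everything to a finitely generated, birational subalgebra and then bound $A$ directly inside a rank-one free module, thereby avoiding any appeal to finiteness of normalizations (which may fail when $R$ is merely Noetherian). Write $C$ for the finitely generated $R$-algebra containing $A$, say $C=R[c_1,\dots,c_n]$, and set $F=\Frac R$, $K=\Frac A$, $L=\Frac C$. Since $C$ is finitely generated over $R$, the field $L$ is a finitely generated extension of $F$; hence so is the intermediate field $K$ (a standard fact for subextensions of finitely generated field extensions). First I would choose $a_1,\dots,a_m\in A$ with $K=F(a_1,\dots,a_m)$ and set $A_0=R[a_1,\dots,a_m]\subseteq A$. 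Then $A_0$ is finitely generated over $R$ (so Noetherian), $\Frac A_0=K=\Frac A$, and $C=A_0[c_1,\dots,c_n]$ is finitely generated over $A_0$. It therefore suffices to produce a nonzero $g\in A_0$ with $A[g^{-1}]=A_g$ finitely generated over $A_0$.

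Next I would perform a generic Noether normalization over the domain $A_0$. Let $d=\td_K L$. Applying Noether normalization to the finitely generated $K$-algebra $C\otimes_{A_0}K$ and clearing denominators, I obtain elements $y_1,\dots,y_d\in C$ forming a transcendence basis of $L$ over $K$ and a nonzero $g\in A_0$ such that, writing $P=(A_0)_g[y_1,\dots,y_d]$, the ring $C_g$ is a finite $P$-module. Here $\Frac P=K(y_1,\dots,y_d)$, and I set $e=[L:\Frac P]<\infty$. Note that $A_g\subseteq C_g$ and $A_g\subseteq K\subseteq\Frac P$.

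The heart of the argument is a bounding step. Fix a $\Frac P$-basis $\omega_1=1,\omega_2,\dots,\omega_e$ of $L$ with all $\omega_i\in C_g$. The resulting coordinate map is an injective $P$-linear embedding $C_g\hookrightarrow(\Frac P)^{e}$, and since $C_g$ is a finite $P$-module its image lies in $\tfrac1\delta P^{e}$ for a common denominator $0\ne\delta\in P$. Now take any $a\in A_g\subseteq K$. Viewed in $L$ we have $a=a\,\omega_1$, so its coordinate vector is $(a,0,\dots,0)$; hence $a=p/\delta$ with $p\in P$. Writing $\delta=\sum_\alpha\delta_\alpha y^\alpha$ and $p=\sum_\alpha p_\alpha y^\alpha$ with $\delta_\alpha,p_\alpha\in(A_0)_g$ (multi-index $\alpha$), and reading the identity $a\delta=p$ as an equality of polynomials in $y_1,\dots,y_d$ with coefficients in $K$, I get $a\,\delta_\alpha=p_\alpha\in(A_0)_g$ for every $\alpha$. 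Choosing $\alpha_0$ with $\delta_{\alpha_0}\ne0$ yields $a\in\tfrac1{\delta_{\alpha_0}}(A_0)_g$.

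Thus $A_g$ is contained in the cyclic $(A_0)_g$-module $\tfrac1{\delta_{\alpha_0}}(A_0)_g\cong(A_0)_g$. Since $(A_0)_g$ is Noetherian, $A_g$ is a finite $(A_0)_g$-module, a fortiori a finitely generated $(A_0)_g$-algebra, and hence finitely generated over $R$; as $g\in A\setminus\{0\}$, this proves the claim. The hard part is precisely this last bounding step: the naive route would pass to the integral closure of $A_0$ in $K$ and finish with Lemma~\ref{integral}, but for a general Noetherian $R$ that normalization need not be finite, so I instead confine $A_g$ directly to a rank-one free $(A_0)_g$-module using the common denominator $\delta$ and the first coordinate. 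Everything else — the reduction to fraction fields, the choice of $A_0$, and the generic Noether normalization — is routine.
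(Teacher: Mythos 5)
The paper does not prove this proposition; it simply cites Giral~\cite[2.1]{G} and Onoda~\cite[Proposition 2.11]{O}, so there is no in-paper argument to compare against. Your proof is correct and self-contained, and it follows the standard strategy of those sources: pick a finitely generated $A_0\subseteq A$ with $\Frac A_0=\Frac A$ (using that an intermediate field of a finitely generated field extension is finitely generated), perform a generic Noether normalization to make $C_g$ module-finite over $P=(A_0)_g[y_1,\dots,y_d]$, and then trap $A_g$ inside a finitely generated $(A_0)_g$-module. Your ``first coordinate plus common denominator'' device --- reading off $a\delta_\alpha=p_\alpha$ by comparing coefficients in $K[y_1,\dots,y_d]$ to get $A_g\subseteq \delta_{\alpha_0}^{-1}(A_0)_g$ --- is a clean way to carry out the trapping step without invoking finiteness of integral closures, which, as you rightly note, is unavailable over a general Noetherian base; this is exactly the point where Onoda's version needs care. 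All the intermediate claims check out: the elements $y_i$ and the module generators of $S^{-1}C$ over $K[y]$ can indeed be cleared into $C$ at the cost of one $g\in A_0\setminus\{0\}$, and a $\Frac P$-basis of $L$ with $\omega_1=1$ and $\omega_i\in C_g$ exists because $L=(P\setminus\{0\})^{-1}C_g$ once $C_g$ is a finite $P$-module. The only caveat is that your argument (like the cited ones) tacitly assumes $A$ and the ambient finitely generated algebra are integral domains, which the paper's bare statement does not say but which is the only case it ever uses (e.g.\ for the field $D/uD$ in Example~\ref{ex1}); it would be worth stating that hypothesis explicitly at the outset.
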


We recall below the local-global result  \cite[Theorem 2.20]{O} which reduces the question 
of finite generation of a subalgebra of a polynomial algebra
to the local situation. Recall that an integral domain $C$ containing $R$ is said to be a locality
(or essentially of finite type) over $R$ if there exists a finitely generated $R$-algebra $B$ and a prime ideal $Q$ of
$B$ such that $C=B_Q$.

\begin{thm}\label{onoda}
Let $R$ be a Noetherian domain and $B$ an integral domain containing $R$ 
such that there exists a nonzero $f \in B$ for which 
$B[f^{-1}]$ is a finitely generated $R$-algebra. Then the following statements hold.
\begin{enumerate}
\item[\rm{(1)}] If $B_M$ is a locality over $R$ for every maximal ideal $M$ of $B$, then $B$ is
a finitely generated $R$-algebra. 
\item[\rm{(2)}] If $B_{\m}$ is a finitely generated $R_{\m}$-algebra for each
maximal ideal $\m$ of $R$, then $B$ is a finitely generated $R$-algebra.
\end{enumerate}
\end{thm}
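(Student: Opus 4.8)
The plan is to reduce the whole statement to a single uniform finiteness assertion along the closed set $V(f)$, and to build, out of the local data, one finitely generated subalgebra that captures $B$ there.

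\textbf{First reductions.} Since $B[f^{-1}]$ is a finitely generated $R$-algebra, I would first choose finitely many $b_1,\dots,b_n\in B$ with $f\in R[b_1,\dots,b_n]=:B_0$ and $B_0[f^{-1}]=B[f^{-1}]$; then $B_0$ is Noetherian and $B_0\subseteq B\subseteq B_0[f^{-1}]$. For any prime $P$ of $B$ with $f\notin P$, the ring $B_P$ is a localization of the finitely generated $R$-algebra $B[f^{-1}]$, hence a locality over $R$, so both the hypotheses and the discrepancy between $B$ and $B_0$ are concentrated on $V(f)$. I would also observe that statement (2) follows from (1): given a maximal ideal $M$ of $B$, put $\p=M\cap R$ and pick a maximal ideal $\m\supseteq\p$ of $R$; since $R\setminus\m\subseteq B\setminus M$, one has $B_M=(B_\m)_{MB_\m}$, and writing $B_\m=S^{-1}D$ with $S=R\setminus\m$ and $D=R[b_1',\dots,b_k']\subseteq B$ a finitely generated $R$-subalgebra, one gets $B_M=D_Q$ for a prime $Q$ of $D$, i.e.\ a locality over $R$. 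Thus it suffices to prove (1).

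\textbf{Reduction to module-finiteness.} I claim it is enough to produce a finitely generated $R$-subalgebra $C$ with $B_0\subseteq C\subseteq B$ and an integer $N$ with $f^N B\subseteq C$. Indeed, $C$ is then Noetherian, $B=\bigcup_{m\ge0}\bigl(B\cap f^{-m}C\bigr)$ is an increasing union of finitely generated $C$-modules, and $f^N B\subseteq C$ forces $B\subseteq f^{-N}C$ to be a finitely generated $C$-module; being module-finite over the finitely generated $R$-algebra $C$, the ring $B$ is then a finitely generated $R$-algebra. So the entire problem becomes the construction of such a pair $(C,N)$.

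\textbf{Local models and globalization.} For each maximal ideal $M$ of $B$ containing $f$, the hypothesis that $B_M$ is a locality over $R$ lets me pick finitely many elements of $B$ whose images exhibit $B_M$ as a localization of a finitely generated $R$-algebra; adjoining them (and $B_0$) gives a finitely generated $R$-subalgebra $C_M\subseteq B$ with $C_M[f^{-1}]=B[f^{-1}]$ and $(C_M)_{M\cap C_M}=B_M$, so that the $B$-module $B/C_M$ is $f$-power torsion with $M\notin\operatorname{Supp}_B(B/C_M)\subseteq V(f)$. The plan is then to run a Noetherian induction on the Noetherian closed set $V(fB_0)\subseteq\Spec B_0$: starting from its finitely many generic points, use the local models to trivialize the discrepancy along each component, descend through closed subsets, and amalgamate the finitely many $C_M$ so produced into a single finitely generated $C$, simultaneously extracting a uniform exponent $N$ with $f^N B\subseteq C$. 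Here Lemma~\ref{easy} is convenient for certifying the relevant local identifications of $C$ with $B$, after which one concludes as in the previous paragraph.

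\textbf{Main obstacle.} The genuine difficulty is exactly this globalization, namely upgrading the pointwise identities $(C_M)_{M\cap C_M}=B_M$ to one finitely generated $C$ together with a uniform bound $f^N B\subseteq C$. The trouble is that $B/C_M$ is merely $f$-power torsion over $C_M$ and need not be a finitely generated $C_M$-module, so its support can fail to be closed and a naive spreading-out or quasi-compactness argument over $\Spec B$ is unavailable; this is precisely the point where the locality (essentially-finite-type) hypothesis, rather than mere Noetherianity of the local rings, must be invoked to keep the local models algebraic enough to patch. I expect the uniform control of the $f$-torsion to be the technical heart, requiring one to work through the finitely many height-one primes of $B_0$ lying over $f$ and an Artin--Rees/Krull-intersection estimate to bound the discrepancy along each irreducible component of $V(f)$.
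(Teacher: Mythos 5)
First, a point of reference: the paper does not prove this statement at all --- Theorem \ref{onoda} is quoted from \cite[Theorem 2.20]{O} and used as a black box --- so there is no in-paper proof to measure your attempt against. Judged on its own terms, your proposal contains a genuine gap. The preliminary steps are sound: the derivation of (2) from (1) by writing $B_{\m}=S^{-1}D$ with $D=R[b_1',\dots,b_k']\subseteq B$ and identifying $B_M=D_Q$ is correct, and so is the observation that it would suffice to produce a finitely generated $R$-subalgebra $C\subseteq B$ and an integer $N$ with $f^NB\subseteq C$ (then $B$ sits inside the Noetherian $C$-module $f^{-N}C$). But that ``reduction'' is an exact restatement of the conclusion rather than progress towards it: such a pair $(C,N)$ exists if and only if $B$ is a finitely generated $R$-algebra (take $C=B$, $N=0$), so the entire difficulty is untouched.

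That difficulty --- the local-to-global passage --- is precisely what you leave as a ``plan.'' You correctly build, for each maximal ideal $M$ containing $f$, a finitely generated $C_M\subseteq B$ with $(C_M)_{M\cap C_M}=B_M$, and you correctly diagnose the obstruction: $B/C_M$ is only $f$-power torsion and not a finitely generated $C_M$-module, so its support need not be closed and no quasi-compactness argument over $\operatorname{Spec} B$ (which may contain infinitely many maximal ideals in $V(f)$, each a priori demanding new generators) produces finitely many local models covering $V(f)$, let alone a uniform exponent $N$. Invoking ``Noetherian induction on $V(fB_0)$'' and an ``Artin--Rees/Krull-intersection estimate'' names a hoped-for mechanism without supplying one; nothing in the proposal explains why the pointwise identities $(C_M)_{M\cap C_M}=B_M$ should propagate along a whole irreducible component of $V(f)$ rather than hold only at the single closed point $M$. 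Since you yourself flag this globalization as ``the technical heart'' and do not carry it out, what you have written is a reformulation of the problem plus correct scaffolding, not a proof; the substance of the theorem remains to be established, and for that one must consult the argument in \cite{O}.
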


We now state a criterion for a torsion-free module over a two-dimensional regular local ring
to be flat. For the lack of a ready reference, we give below a proof.

\begin{lem}\label{cflat}
Let $R$ be a two-dimensional regular local ring and $\{\pi_1, \pi_2 \}$ a regular system of
parameters in $R$.
Let $M$ 
be a torsion-free $R$-module such that $\pi_2$ is  $(M/\pi_1M)$-regular.  
Then $M$ is flat over $R$. 
\end{lem}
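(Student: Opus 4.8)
The plan is to verify flatness through the vanishing of $\operatorname{Tor}_1^R(R/\p, M)$ for every prime ideal $\p$ of $R$. Recall that an $R$-module $M$ is flat precisely when $\operatorname{Tor}_1^R(R/I, M) = 0$ for every ideal $I$, and that, over the Noetherian ring $R$, each cyclic module $R/I$ carries a finite filtration whose successive quotients have the form $R/\p$ with $\p$ prime. A routine induction along such a filtration, using the long exact sequence of $\operatorname{Tor}$, reduces the whole assertion to showing $\operatorname{Tor}_1^R(R/\p, M) = 0$ for each prime $\p$. This reduction is exactly what lets us bypass finite generation of $M$: the customary route through a finite projective resolution and the Auslander--Buchsbaum formula is unavailable, since $M$ is not assumed to be a finite $R$-module.

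I would then split into cases according to $\hgt \p$. For $\p = (0)$ the module $R/\p = R$ is free and there is nothing to prove. For $\hgt \p = 1$, I use that the regular local ring $R$ is a UFD, so $\p = (f)$ is principal; resolving $R/\p$ by $0 \to R \xrightarrow{f} R \to R/\p \to 0$ identifies $\operatorname{Tor}_1^R(R/\p, M)$ with the submodule $\{m \in M : fm = 0\}$, which vanishes because $M$ is torsion-free and $f \neq 0$. Thus torsion-freeness alone disposes of every prime of height at most $1$.

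The decisive case is $\p = \m = (\pi_1, \pi_2)$. Here I would first record that $\pi_1, \pi_2$ is an $M$-regular sequence: multiplication by $\pi_1$ is injective on $M$ since $M$ is torsion-free and $\pi_1 \neq 0$, while $\pi_2$ is regular on $M/\pi_1 M$ by hypothesis. As $\pi_1, \pi_2$ is a regular sequence in $R$, the Koszul complex $K_\bullet(\pi_1, \pi_2; R)$ is a free resolution of $R/\m$, so that $\operatorname{Tor}_i^R(R/\m, M) \cong H_i(K_\bullet(\pi_1, \pi_2; M))$. The vanishing of Koszul homology in positive degrees for an $M$-regular sequence then gives $\operatorname{Tor}_1^R(R/\m, M) = 0$. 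Equivalently, since $\pi_1$ is a nonzerodivisor on both $R$ and $M$, change of rings yields $\operatorname{Tor}_1^R(R/\m, M) \cong \operatorname{Tor}_1^{R/\pi_1 R}(R/\m, M/\pi_1 M)$, and the latter is the $\bar{\pi}_2$-torsion of the $(R/\pi_1 R)$-module $M/\pi_1 M$, which is torsion-free over the discrete valuation ring $R/\pi_1 R$ by hypothesis and so has no such torsion.

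Combining the three cases gives $\operatorname{Tor}_1^R(R/\p, M) = 0$ for all primes $\p$, whence $M$ is flat over $R$. The point to guard against throughout is the possible non-finite-generation of $M$; the argument is arranged so that each $\operatorname{Tor}$ computation is carried out against a fixed cyclic module $R/\p$ and draws on the hypotheses only through the torsion-freeness of $M$ and the regularity of the sequence $\pi_1, \pi_2$ on $M$, both of which are insensitive to finite generation.
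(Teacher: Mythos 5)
Your proposal is correct and follows essentially the same route as the paper: both reduce flatness to a prime-by-prime check (injectivity of $P\otimes_R M\to M$, i.e.\ $\operatorname{Tor}_1^R(R/P,M)=0$), split into cases by $\hgt P$, dispose of heights $0$ and $1$ by torsion-freeness together with the UFD property of $R$, and use the hypothesis that $\pi_1,\pi_2$ is a (weak) $M$-regular sequence exactly at the maximal ideal. The only difference is presentational: where you invoke Koszul homology vanishing (or change of rings over $R/\pi_1R$), the paper carries out the equivalent element computation, showing that $\pi_1 m_1+\pi_2 m_2=0$ forces $\xi=\pi_1\otimes m_1+\pi_2\otimes m_2$ to vanish, and it cites \cite{BG2} for the reduction to primes that you obtain via the standard prime-filtration argument.
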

\begin{proof}
 By \cite[Lemma 4.1]{BG2}, it suffices to show that for every prime ideal $P$ of $R$,
the natural map 
\begin{equation}\label{injective}
\phi:  P \otimes_R M \longrightarrow M
\end{equation}
 is injective.  

Fix a prime ideal $P$ of $R$. 
Suppose $\hgt (P)=0$. Then $P= 0$ as $R$ is an integral domain and 
hence the natural map $\phi$ in (\ref{injective}) is trivially injective.  

Next, suppose that $\hgt (P)=1$. Since $R$ is a regular local ring and hence a 
UFD, $P$ is a principal ideal. 
Since $M$ is torsion-free, it then follows that the natural map $\phi$ in (\ref{injective})
is injective. 

Finally, we consider the case
$\hgt (P)=2$, i.e., $P=(\pi_1, \pi_2)R$, the unique maximal ideal of $R$. 
Then, any element $\xi$ in $P \otimes_R M$ 
can be expressed as 
\begin{equation}\label{xi}
\xi= \pi_1 \otimes m_1+ \pi_2 \otimes m_2
\end{equation}
for some $m_1,\ m_2 \in M$.
Suppose that $\phi(\xi)=0$, i.e., 
\begin{equation}\label{regular}
\pi_1 m_1 + \pi_2 m_2 = 0. 
\end{equation}
Since  $\pi_2$ is ($M/\pi_1M$)-regular, we have $m_2=\pi_1 m$ for some $m \in M$.
Since $M$ is torsion-free, from (\ref{regular}), we have $m_1 = -\pi_2 m$. 
Substituting in (\ref{xi}), we see that $\xi=0$, i.e., 
$\phi$ is injective. 
\end{proof}

\section{Main Results}

In this section we shall prove our main theorem
and record a few auxiliary observations. 

For the proof of our main theorem, we first record two 
results on finite generation over one-dimensional Nagata domains, 
which are generalisations of Theorems \ref{prop34}
and \ref{thmB}, respectively. 
\begin{prop}\label{3.4}
Let $R$ be a one-dimensional Nagata domain, and
let $B$ be a Krull domain such that 
$R\subseteq B$ with ${\rm tr.deg}_R B = 1$. Suppose that there
exists $\pi\in R$ such that $B[\pi^{-1}]$ 
is a finitely generated $R[\pi^{-1}]$-algebra.
Then the following statements are equivalent.
\begin{enumerate}
\item[{\rm(i)}] $B$ is a finitely generated $R$-algebra. 
\item[{\rm(ii)}] ${\rm tr.deg}_{R/P\cap R} B/P >0$ for every minimal prime ideal 
$P$ of $\pi B$.
\end{enumerate}
\end{prop}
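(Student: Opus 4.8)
I would treat the two implications separately; the forward one is short and the converse carries all the weight.

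\medskip
\noi
\emph{For (i) $\Rightarrow$ (ii)}, assume $B$ is finitely generated over $R$ and let $P$ be a minimal prime of $\pi B$ (we may take $\pi$ to be a nonzero non-unit, the remaining cases being trivial). Then $B$ is Noetherian, so $\hgt P=1$ by Krull's principal ideal theorem, while $\p:=P\cap R$ contains the nonzero element $\pi$ and hence has $\hgt\p=1$ since $R$ is a one-dimensional domain. A one-dimensional Noetherian domain is Cohen--Macaulay and therefore universally catenary, so the equality case of Lemma \ref{DE} applies and gives ${\rm tr.deg}_{R/\p}B/P={\rm tr.deg}_RB=1>0$, which is precisely (ii).

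\medskip
\noi
\emph{For (ii) $\Rightarrow$ (i)}, I would run a chain of reductions and then invoke the complete case, Theorem \ref{prop34}. First, $B[\pi^{-1}]$ is finitely generated over $R[\pi^{-1}]$ and hence over $R$, so the standing hypothesis of Theorem \ref{onoda} holds with $f=\pi$ and it suffices to argue locally on the base. Since $B$ is a Krull domain it is integrally closed in $\Frac(B)$, so it contains the integral closure $\bar R$ of $R$ in $\Frac(R)$; as $R$ is Nagata, $\bar R$ is module-finite over $R$, and by transitivity of finite generation it is enough to prove that $B$ is finitely generated over $\bar R$. The fibre condition (ii) persists, because $\bar R/(P\cap\bar R)$ is algebraic over $R/(P\cap R)$ and transcendence degree is insensitive to algebraic ground extensions. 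Now $\bar R$ is a one-dimensional normal, hence locally Dedekind, domain, so by Theorem \ref{onoda}(2) it remains to show that $B_{\n}:=B\otimes_{\bar R}\bar R_{\n}$ is finitely generated over the discrete valuation ring $S:=\bar R_{\n}$ for each maximal ideal $\n$ of $\bar R$. When $\pi\notin\n$ this is immediate, as $B_{\n}=B[\pi^{-1}]\otimes_{\bar R[\pi^{-1}]}S$ is a base change of a finitely generated algebra; and condition (ii) transfers to the minimal primes of $\pi B_{\n}$ exactly as above.

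\medskip
\noi
This leaves the crucial case $\pi\in\n$, where $S$ is a discrete valuation ring with $S[\pi^{-1}]=K:=\Frac(S)$, and $B_{\n}$ is a Krull domain over $S$ with ${\rm tr.deg}_SB_{\n}=1$, with $B_{\n}[\pi^{-1}]$ finitely generated over $K$, and with (ii) holding along the minimal primes of $\pi B_{\n}$. Here I would pass to the completion $\hat S$, a complete discrete valuation ring with the same residue field as $S$, and set $\hat B:=B_{\n}\otimes_S\hat S$. Using that $S$ is excellent (automatic in characteristic zero, and available in the intended application, where the base is complete), base change along $S\to\hat S$ preserves normality, so $\hat B$ is a finite product of Krull domains, each containing $\hat S$. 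For a uniformizer $t$ of $S$ one has $\hat B/t\hat B=B_{\n}/tB_{\n}$, so the special fibre, hence the list of minimal primes of $\pi\hat B$ together with their residue rings, is unchanged; since completion does not alter the residue field, condition (ii) survives verbatim, and the transcendence degree and finite generation of $\hat B[\pi^{-1}]$ over $\Frac(\hat S)$ follow by base change. Theorem \ref{prop34} then applies to each factor and yields that $\hat B$ is finitely generated over $\hat S$. Finally I would descend along the faithfully flat map $S\to\hat S$, using the finite generation criterion over excellent rings from the Appendix, to conclude that $B_{\n}$ is finitely generated over $S$.

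\medskip
\noi
The hard part is this last step. The genuine content is already isolated in Theorem \ref{prop34} --- that over a complete discrete valuation ring positivity of the residual transcendence degree along the special fibre forces finite generation --- so the real obstacles are delicate bookkeeping: verifying that $\hat B$ is normal (so that each component is Krull and Theorem \ref{prop34} is applicable), checking that the fibre data and the transcendence and finite-generation hypotheses are transported faithfully to the completion, and securing the faithfully flat descent of finite generation, for which the excellence of $S$ and the Appendix criterion are essential. I expect the normality of $\hat B$ and the descent, together with the attendant excellence hypothesis, to demand the most care.
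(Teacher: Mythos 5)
Your proof of (i) $\Rightarrow$ (ii) is correct and is essentially the paper's argument (the paper gets $\hgt P=1$ from $B$ being a Krull domain rather than from the principal ideal theorem, and cites Matsumura for the universal catenarity of $R$, but these are cosmetic differences). For (ii) $\Rightarrow$ (i), your reductions also coincide with the paper's: pass to the normalisation $\bar R$, which lies in $B$ and is finite over $R$ by the Nagata hypothesis, note that the fibre condition survives the integral extension of residue rings, and then localise at the maximal ideals of $\bar R$ via Theorem \ref{onoda}(2), disposing of the maximal ideals not containing $\pi$.

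The gap is in your final step. The paper does not pass to the completion: having reduced to the case where the base is a discrete valuation ring which is also a Nagata ring, it invokes \cite[Proposition 3.4]{DO2} directly --- that result is proved in \cite{DO2} for Nagata DVRs, and the statement quoted as Theorem \ref{prop34} in the introduction is only its complete special case. Your substitute, base change to $\hat S$ followed by descent, leaves the genuinely hard points unproved. First, $\hat B=B_{\n}\otimes_S\hat S$ must be a Krull domain (or a finite product of such) for Theorem \ref{prop34} to apply, but $B_{\n}$ is only assumed Krull, not Noetherian, so the standard ``flat base change with geometrically normal fibres preserves normality'' theorems do not apply; even the reducedness of $\hat B$ already requires separability of $\Frac(\hat S)$ over $\Frac(S)$, and exhibiting $\hat B$ as an intersection of $\hat B[\pi^{-1}]$ with DVRs at the minimal primes of $\pi\hat B$ is real work, not bookkeeping. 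Second, you lean on excellence of $S$, which is not among the hypotheses: the proposition concerns an arbitrary one-dimensional Nagata domain, with no restriction on the characteristic and no completeness of the base, so ``automatic in characteristic zero, and available in the intended application'' does not cover the statement being proved; one would have to show that a Nagata DVR has geometrically reduced generic formal fibre (true, but a theorem you would need to quote or prove, not a standing assumption). Third, the Appendix result (Proposition \ref{toya}) is a criterion for finite generation of a normal domain in terms of its behaviour at a single element $f$; it is not a faithfully flat descent theorem and cannot be used to descend finite generation along $S\to\hat S$. Elementary faithfully flat descent of generators would do that particular job, so the third point is reparable; the first two are where the argument is actually incomplete, and they are exactly the difficulties the paper avoids by citing the DVR form of \cite[Proposition 3.4]{DO2}.
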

\begin{proof}
(i) $\Rightarrow$ (ii).
Let $P$ be a minimal prime ideal of $\pi B$. Then $\hgt P =1$ as $B$ is a Krull domain and
$\hgt (P \cap R) =1$ as ${\rm ~dim~}(R)=1$ and $P \cap R \ne 0$. By \cite[p.\,255, Corollary 2]{M},
$R$ is universally catenary.  Hence,  by the dimension formula
Lemma \ref{DE}, we have ${\rm tr.deg}_{R/P\cap R} B/P =1$.

\smallskip

(ii) $\Rightarrow$ (i).
Let $\bar{R}$ denote the normalisation of $R$. 
Then $\bar{R}\subseteq B$, and $\bar{R}$ is a finite $R$-module because $R$ is 
a Nagata domain. Hence it is enough to show that 
$B$ is a finitely generated $\bar{R}$-algebra. 

If $P$ is a minimal prime ideal of $\pi B$, 
$\p=P\cap R$ and $\p'=P\cap \bar{R}$,  
then $\bar{R}/\p'$ is integral over $R/\p$,
so that ${\rm tr.deg}_{\bar{R}/\p'}B/P={\rm tr.deg}_{R/\p}B/P>0$.
Thus, replacing $R$ by $\bar{R}$, we assume that $R$ is a Dedekind Nagata domain.

Since $B[\pi^{-1}]$ is a finitely generated $R[\pi^{-1}]$-algebra, 
by Theorem \ref{onoda}(2), it suffices  to show that 
$B_{\m} (= B \otimes_{R} {R}_{\m})$ is a 
finitely generated ${R}_{\m}$-algebra
for every maximal ideal $\m$ of ${R}$ with $\pi\in\m$. 

Fix a maximal ideal $\m$ of $R$ containing $\pi$ and set $k:=R/\m=R_{\m}/\m R_{\m}$.
Now note that for any $P\in{\rm Spec} B$, if $PB_{\m}$ is
a minimal prime ideal of $\pi B_{\m}$,
then $P$ is a minimal prime ideal of $\pi B$. Moreover, $PB_{\m} \cap R_{\m} =\m R_{\m}$ 
and, from our hypothesis, it follows that ${\rm tr.deg}_kB_\m/PB_\m>0$. 
Thus, replacing $R$ by $R_{\m}$, we may further assume that 
$R$ is a discrete valuation ring which is also a Nagata ring. 

The desired result now follows from \cite[Proposition 3.4]{DO2}.
\end{proof}
\begin{prop}\label{4.2}
Let $(R, \m)$ be a complete one-dimensional Noetherian local domain
with residue field $k$ and field of fractions $K$.
Suppose that the algebraic closure $\bar{k}$ of $k$ is a 
finite extension of $k$. 
Let $B$ be a Noetherian integral domain such that 
$R\subseteq B$, ${\rm tr.deg}_R B \le 1$, and  
$B\otimes_R K$ is a finitely generated $K$-algebra. 
Then $B$ is a finitely generated $R$-algebra.  
\end{prop}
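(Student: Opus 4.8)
The plan is to reduce to the discrete valuation ring case already settled in Theorem \ref{thmB}, by passing to the normalisation of $R$. First I would let $\bar R$ denote the integral closure of $R$ in $K$. Since $R$ is a complete Noetherian local domain it is a Nagata ring, so $\bar R$ is a finite $R$-module; being a one-dimensional normal Noetherian domain that is module-finite over the Henselian local ring $R$, it is a local ring, in fact a complete discrete valuation ring, with maximal ideal generated by a uniformiser $\pi$. Its residue field $k'$ is finite over $k$, so the algebraic closure $\bar k=\overline{k'}$ is finite over $k'$ as well; thus $\bar R$ satisfies all the hypotheses imposed on the base ring in Theorem \ref{thmB}.

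Next, assuming first that $\td_R B=1$, I would form the compositum $C:=B[\bar R]\subseteq\Frac(B)$. As each generator of $\bar R$ over $R$ is integral over $R\subseteq B$, the ring $C$ is a finite $B$-module, hence a Noetherian domain integral over $B$, and $\td_{\bar R}C=\td_R B=1$. Writing $S=R\setminus\{0\}$ and noting $\bar R\subseteq K=S^{-1}R\subseteq S^{-1}B$, I get $S^{-1}C=S^{-1}B=B\otimes_R K$; and because $\pi$ is a uniformiser of the DVR $\bar R$, every $s\in S$ is a unit of $\bar R$ times a power of $\pi$, which yields $C[\pi^{-1}]=S^{-1}C=B\otimes_R K$, a finitely generated $K=\bar R[\pi^{-1}]$-algebra. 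Theorem \ref{thmB} then applies to the pair $\bar R\subseteq C$ and shows that $C$ is finitely generated over $\bar R$, hence over $R$ (since $\bar R$ is finite over $R$). Finally, since $B\subseteq C$ with $C$ integral over $B$ and $C$ finitely generated over $R$, Lemma \ref{integral} gives that $B$ is finitely generated over $R$.

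It remains to dispose of the degenerate case $\td_R B=0$. Here $B\otimes_R K=S^{-1}B$ is a finitely generated $K$-algebra which is a domain of transcendence degree $0$, hence a field $L$ finite over $K$, with $L=\Frac(B)$. I would let $\tilde R$ be the integral closure of $R$ in $L$; as above it is a complete DVR, finite over $R$, with $\Frac(\tilde R)=L$. The compositum $C':=B[\tilde R]$ lies between $\tilde R$ and its fraction field $L$, and for a DVR the only rings in that range are $\tilde R$ and $L$ itself, both of which are finitely generated over $R$. Since $C'$ is again a finite $B$-module, integral over $B$, Lemma \ref{integral} yields finite generation of $B$ over $R$, completing this case.

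The main obstacle is the reduction step: one must verify that the normalisation is genuinely a complete discrete valuation ring satisfying the residue-field condition of Theorem \ref{thmB}, and carry out the localisation bookkeeping showing $C[\pi^{-1}]=B\otimes_R K$ so that the hypotheses transfer correctly. The branch $\td_R B=0$, though quantitatively trivial, cannot be fed directly into Theorem \ref{thmB} (which requires transcendence degree exactly one) and instead needs the separate observation on overrings of a discrete valuation ring.
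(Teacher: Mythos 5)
Your proof is correct, and it follows the same overall strategy as the paper --- reduce to the complete DVR $\bar R$ and invoke Theorem \ref{thmB}, then descend to $B$ via Lemma \ref{integral} --- but the auxiliary ring you feed into Theorem \ref{thmB} is genuinely different. The paper first shows $\dim B\le 2$ via the dimension inequality and then passes to the full normalisation $\bar B$ of $B$, which requires Nagata's theorem that the normalisation of a Noetherian domain of dimension $\le 2$ is again Noetherian. You instead use the compositum $C=B\cdot\bar R$, which is module-finite over $B$ by construction and hence automatically a Noetherian domain; this sidesteps both the dimension computation and the Mori--Nagata-type input, at the cost of having to verify the localisation identity $C[\pi^{-1}]=B\otimes_RK$ (which you do correctly, using that every nonzero element of $R$ is a unit times a power of the uniformiser in $\bar R$). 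Your separate treatment of the case ${\rm tr.deg}_RB=0$ is also a genuine point: Theorem \ref{thmB} as quoted demands transcendence degree exactly one, and the paper's proof applies it with ``${\rm tr.deg}_{\bar R}\bar B\le 1$'' without comment (implicitly relying on the version in \cite{DO2} covering the algebraic case); your observation that the compositum $B\cdot\tilde R$ is an overring of a DVR, hence equal to $\tilde R$ or to its fraction field, disposes of this branch cleanly and makes the argument self-contained relative to the statement of Theorem \ref{thmB} given here.
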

\begin{proof}
We note that $\dim B \le 2$. Indeed, if $P$ is a prime ideal of $B$, 
then setting $\p := P \cap R$, we have $\hgt \p \le 1$, so that,
by Theorem \ref{C},
\[
\hgt P \le \hgt \p +{\rm tr.deg}_R B -{\rm tr.deg}_{R/\p} B/P\le 2,
\]
as claimed. 
Let $\bar{R}$ be the normalisation of $R$ and 
$\bar{B}$ the normalisation of $B$. 
Then $\bar{R}$ is a complete discrete valuation ring
which is a finite $R$-module (cf. \cite[p.\,263]{M}). 
Let $k'$ denote the residue field of $\bar{R}$. 
Then $k'$ is algebraic over $k$ and 
hence $\bar{k}$ is a finite algebraic extension of $k'$. 
Since $\dim B \le 2$, $\bar{B}$ is a Noetherian domain (cf. \cite[Theorems 33.2, 33.12]{N}). 
Note that $K=R[\pi^{-1}]$ for any $\pi (\ne 0) \in \m$. 
Since $\bar{R} \subseteq \bar{B}$, ${\rm tr.deg}_{\bar{R}} \bar{B} \le 1$ and 
$\bar{B} \otimes_{\bar{R}} K$ is a finitely generated $K$-algebra, 
it follows from \cite[Theorem 4.2]{DO2} that 
$\bar{B}$ is a finitely generated $\bar{R}$-algebra.
Hence $\bar{B}$ is a finitely generated $R$-algebra, 
because $\bar{R}$ is a finite $R$-module. 
Since $\bar{B}$ is integral over $B$, $B$  
is a finitely generated $R$-algebra by Lemma \ref{integral}. 
\end{proof}

We also need the following technical result in 
the proof of our main theorem.

\begin{prop}\label{avfin}
Let $(R, \m)$ be a two-dimensional Nagata local domain with
residue field $k$, and let $A$ be an integral domain such that 
$R\subseteq A$ with ${\rm tr.deg}_R A = 1$. For $P \in {\rm Spec}~A$, let $\p$ denote $P\cap R$.
Set $\Delta:=\{P\in {\rm Ass}_A(A/\pi A)~|~\hgt \p=1\}$. Suppose that $A$ satisfies the
following hypotheses.
\begin{enumerate}
\item[\rm (I)] $A$ is either a Krull domain or a Noetherian domain.
\item[\rm (II)] There exists a nonzero element $\pi \in \m$ such that $A[\pi^{-1}]$ is finitely generated over $R[\pi^{-1}]$.
\item[\rm (III)] ${\rm tr.deg}_{R/\p} A/P >0$ for every $P\in \Delta$.
\end{enumerate}  
Then the following statements hold.
\begin{enumerate}
\item[\rm (1)]
$A[f^{-1}]$ is finitely generated over $R[f^{-1}]$ for every
nonzero $f \in \m$.
\item[\rm (2)] Every $P \in \Delta$ has the following properties.  
\begin{enumerate}
\item[\rm(a)] ${\rm tr.deg}_{R/\p}A/P=1$.
\item[\rm(b)] $P$ satisfies the dimension equality relative to $R$. 
\item[\rm(c)] $A/P \otimes_{R/\p} k(\p)$ is finitely generated over $k(\p)$, 
where $k(\p)=R_\p/\p R_\p$ is the field of fractions of $R/\p$.
\end{enumerate}
\end{enumerate}
\end{prop}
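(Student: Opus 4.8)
The plan is to reduce everything to the one-dimensional situation already handled in Propositions \ref{3.4} and \ref{4.2}, and to the dimension formula of Lemma \ref{DE}. First I would establish part (1). Fix a nonzero $f \in \m$. By Proposition \ref{giral}, $A[g^{-1}]$ is finitely generated over $R$ for some nonzero $g \in A$; combined with hypothesis (II), Lemma \ref{noethl}(2) lets me reduce finite generation of $A[f^{-1}]$ to finite generation after inverting a single well-chosen element, so the genuine content is local. I would pass to the localization $R' := R_{\p}$ at a height-one prime $\p = fR \cap R$ (more precisely, at the minimal primes of $fR$ in $R$), where $R'$ is a one-dimensional Nagata \emph{local} domain since $R$ is two-dimensional Nagata and localization preserves the Nagata property. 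Setting $B := A \otimes_R R'$, I would verify that $B$ is a Krull (resp. Noetherian) domain with $\td_{R'} B = 1$, that $B[\pi^{-1}]$ is finitely generated over $R'[\pi^{-1}]$, and that the fibre condition (III) transfers: the minimal primes $P$ of $\pi B$ contract to height-one primes of $R$, so they lie in $\Delta$, whence $\td_{R'/P\cap R'} B/P > 0$. Then Proposition \ref{3.4} (in the Krull case) or Proposition \ref{4.2} (after passing to completion in the Noetherian case) gives that $B$ is finitely generated over $R'$. A patching argument across the finitely many height-one primes $\p$ containing $f$, using Theorem \ref{onoda}, then yields finite generation of $A[f^{-1}]$ over $R[f^{-1}]$.

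For part (2), let $P \in \Delta$, so $\hgt \p = 1$. The key is that I have just shown $A[f^{-1}]$ is finitely generated over $R[f^{-1}]$ for a suitable $f \in \p \setminus \{0\}$ with $f \notin P$ chosen so that $P$ survives in the localization; replacing $R, A$ by $R[f^{-1}], A[f^{-1}]$ I may assume $A$ is finitely generated over $R$. Since $R$ is Nagata and two-dimensional, it is universally catenary (a Nagata local domain is excellent, hence universally catenary), so the dimension formula holds between $R$ and the finitely generated $R$-algebra $A$. I would then invoke Lemma \ref{DE}: because $\hgt \p = 1$ and $A$ is universally catenary and finitely generated over $R$, the inequality $\td_{R/\p} A/P \le \td_R A = 1$ is an equality, giving (a), and the equality clause of Lemma \ref{DE} simultaneously yields $\hgt P = 1$ together with the dimension equality relative to $R$, which is (b). Finally, (c) follows from (a) and (b): finite generation of $A$ over $R$ localizes to give that $A/P \otimes_{R/\p} k(\p)$ is a finitely generated $k(\p)$-algebra, and it is of transcendence degree $1$ over the field $k(\p)$ by (a).

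The main obstacle I anticipate is ensuring that hypothesis (III), stated over the two-dimensional base $R$, transfers correctly to the one-dimensional localizations, and that the associated-prime structure behaves well under localization in both the Krull and Noetherian cases simultaneously. Specifically, I must check that a minimal prime $P$ of $\pi B$ over $R' = R_{\p}$ pulls back to a $P \in \Delta$ and not merely to some prime with $\hgt(P \cap R) = 2$; this requires tracking that contraction to $R$ gives height exactly one, which is where the condition $\hgt \p = 1$ in the definition of $\Delta$ is doing real work. The Noetherian case of part (1) additionally requires passing to the completion of $R'$ to apply Proposition \ref{4.2}, and I must confirm that completion preserves the relevant transcendence-degree and finite-generation hypotheses and that $\td_{R'} B \le 1$ is not disturbed — this faithfully-flat descent step is routine but needs the residue-field hypothesis on $\bar{k}$ to remain available after localization, which it does since residue fields of localizations of $R$ at height-one primes still have algebraically closed (or finite-over-closed) algebraic closures inherited appropriately.
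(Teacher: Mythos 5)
There are genuine gaps. The most serious is your treatment of the Noetherian case of part (1): you propose to pass to a one-dimensional local base and then ``pass to completion ... to apply Proposition \ref{4.2}'', but Proposition \ref{4.2} requires the residue field of the complete one-dimensional local base to satisfy $[\bar{k}:k]<\infty$, and Proposition \ref{avfin} imposes \emph{no} hypothesis on the residue field of $R$. Worse, even if $R/\m$ were algebraically closed, the residue field of $R_\p$ at a height-one prime is the fraction field $k(\p)$ of $R/\p$ (e.g.\ $\bC((v))$ for $R=\bC[[u,v]]$, $\p=uR$), whose algebraic closure is an infinite extension; so the hypothesis is not ``inherited appropriately'' as you assert. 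The paper avoids Proposition \ref{4.2} entirely here: in the Noetherian case it replaces $A$ by its normalisation $D$, which is Krull, checks that $D$ inherits hypotheses (II) and (III) (using that $A[\pi^{-1}]$ is Nagata, so $D[\pi^{-1}]$ is a finite module over it, and Nagata's theorem that minimal primes of $\pi D$ contract to associated primes of $\pi A$), applies the Krull case, and descends by Lemma \ref{integral}. Your reduction for part (1) also localises at the wrong primes: the minimal primes of $fR$ contain $f$ and therefore die in $R[f^{-1}]$; the relevant maximal ideals of the one-dimensional ring $S=R[f^{-1}]$ are the height-one primes of $R$ \emph{not} containing $f$ (and one only needs those containing $\pi$). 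The paper simply applies Proposition \ref{3.4} with base $S$, which performs this localisation internally.

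Part (2) is argued backwards. The ``in particular'' clause of Lemma \ref{DE} that you invoke needs $\hgt P=1$ as an \emph{input}, and this is not known a priori: when $A$ is merely Noetherian, an associated prime of $\pi A$ may be embedded of height $2$, and indeed deciding that $\hgt P=1$ is part of assertion (2b). Your argument also never uses hypothesis (III), which is essential (without it one could have ${\rm tr.deg}_{R/\p}A/P=0$ and $\hgt P=2$, consistent with the dimension formula). The paper's route is shorter and sound: the inequality of Lemma \ref{DE} (which needs no catenarity and no finite generation) gives ${\rm tr.deg}_{R/\p}A/P\le{\rm tr.deg}_RA=1$, hypothesis (III) gives $\ge 1$, hence equality; the equality clause of Lemma \ref{DE} then \emph{delivers} $\hgt P=1$ and the dimension equality as conclusions. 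Two further slips: your claim that a Nagata local domain is excellent, hence universally catenary, is false in general (Ogoma's normal Nagata local domain is not catenary); and in (2c) you ask for $f\in\p$ with $f\notin P$, which is impossible since $\p\subseteq P$ --- one must take $f\in\m\setminus\p$, as the paper does, so that $R'[f^{-1}]=k(\p)$ and (c) follows from (1).
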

\begin{proof}
(1) Fix $f (\ne 0) \in \m$. Set $S:= R[f^{-1}]$ and $C:= A[f^{-1}]$.
Note that $S$ is a one-dimensional Nagata domain.
Also note that $A[\pi^{-1}]$ is a Nagata domain because $A[\pi^{-1}]$
is finitely generated over $R[\pi^{-1}]$ and $R[\pi^{-1}]$ is 
a Nagata domain.

\smallskip

Now, if $f \in \sqrt{\pi A}$, then $f^n=\pi a$ for some $n>0$ and
$a\in A$, so that $C=A[f^{-n}]=A[\pi^{-1}, a^{-1}]$ is 
finitely generated over $S$ and we are through. So we assume that $f \notin \sqrt{\pi A}$,
i.e., $\pi C$ is a proper ideal of $C$.  

\smallskip

\noindent
{\bf Case (i).} First, we consider the case where $A$ is a Krull domain.  
In this case, $C$ is a Krull domain such that
$C[\pi^{-1}]$ is finitely generated over $S[\pi^{-1}]$.
Let $Q$ be a minimal prime ideal of $\pi C$. Set
$P:=Q\cap A$ and $\q:=P\cap S=Q\cap S$. Then $P$ is
a minimal prime ideal of $\pi A$ and $\p=P\cap R=\q\cap R$.
Note that $\hgt \q=1$ as $\dim S=1$, and hence, 
$\hgt \p=1$. Therefore, ${\rm tr.deg}_{S/\q}C/Q$ ($={\rm tr.deg}_{R/\p}A/P$) $>0$, 
by our hypothesis. Hence, $C$ is finitely generated over $S$ by Proposition \ref{3.4}.

\smallskip

\noindent
{\bf Case (ii).} Next, we consider the case where $A$ is a Noetherian domain.
Let $D$ denote the normalisation of $A$. Then $D$ is a Krull domain 
(cf. \cite[Theorem 33.10]{N}). We shall verify that $D$ too satisfies all the hypotheses given for $A$.
 
Since $D[\pi^{-1}]$ is the normalisation
of the Nagata domain $A[\pi^{-1}]$, it follows that $D[\pi^{-1}]$ is
a finite $A[\pi^{-1}]$-module, and hence $D[\pi^{-1}]$
is finitely generated over $R[\pi^{-1}]$.

Let $Q$ be a minimal prime ideal of $\pi D$ such that
$\hgt (Q\cap R)=1$. Let $P= Q \cap A$. Then $\p=P\cap R=Q\cap R$. 
By \cite[Theorem 33.11]{N}, $P$ is an associated prime ideal of $\pi A$
 so that ${\rm tr.deg}_{R/\p}A/P>0$
by our hypothesis. Since $D/Q$ is integral over 
$A/P$, it follows that 
${\rm tr.deg}_{R/\p} D/Q>0$. 

Thus, by Case (i), $D[f^{-1}]$ is finitely generated over $R[f^{-1}]$.
Since $D[f^{-1}]$ is integral over $A[f^{-1}]$, we conclude, by Lemma \ref{integral},  
that $A[f^{-1}]$ is finitely generated over $R[f^{-1}]$.

\medskip

(2) Fix $P \in \Delta$. 

\smallskip

\noindent
(2a) Since $\hgt\p=1$, we have ${\rm tr.deg}_{R/\p}A/P\le {\rm tr.deg}_RA$
by Lemma \ref{DE}. But, by our hypotheses, ${\rm tr.deg}_RA=1$ and ${\rm tr.deg}_{R/\p}A/P \ge 1$.
Thus, ${\rm tr.deg}_{R/\p}A/P={\rm tr.deg}_RA=1$.

\smallskip

\noindent
(2b) Since the equality ${\rm tr.deg}_{R/\p}A/P={\rm tr.deg}_RA$ holds, $P$
satisfies the dimension equality relative to $R$ again by Lemma \ref{DE}. 

\smallskip

\noindent
(2c) Set $R':=R/\p$ and $B:=A/P$.
 Then $R'$ is a one-dimensional local domain. 
Hence, for any $f (\ne 0) \in \m\setminus\p$, we have
$R'[f^{-1}]=k(\p)$, so that $B\otimes_{R'}k(\p)=A[f^{-1}]/PA[f^{-1}]$.
Thus, the assertion is an immediate consequence of (1).
\end{proof}

We have the following consequence for the case $R$ is complete and the residue field of $R$
is an algebraically closed field or a real closed field.

\begin{cor}\label{avfincor}
Let $(R, \m)$ be a two-dimensional complete local domain with
residue field $k$ such that $[\bar{k}:k]<\infty$, where $\bar{k}$ denotes the
algebraic closure of $k$. Let $A$ be an integral domain satisfying all the hypotheses
of Proposition \ref{avfin}. Let $\Delta$ be as in Proposition \ref{avfin}.
Suppose that $A/P$ is Noetherian for some $P \in \Delta$.
Then $A/P$ is finitely generated over $R/\p$, where $\p=P \cap R$ as in Proposition \ref{avfin}.
\end{cor}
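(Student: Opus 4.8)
The plan is to reduce Corollary \ref{avfincor} to Proposition \ref{4.2} by passing to the one-dimensional base ring $R/\p$ and the quotient domain $A/P$. Fix the $P \in \Delta$ for which $A/P$ is Noetherian, and set $R':=R/\p$ and $B:=A/P$. Since $P \in \Delta$ we have $\hgt \p =1$, and because $R$ is a two-dimensional complete local domain, $R'$ is a one-dimensional complete local domain (completeness and the local property descend to quotients). Its residue field is again $k$, and $[\bar k : k]<\infty$ by hypothesis, so the residue-field condition of Proposition \ref{4.2} is met. Thus $R'$ satisfies all the hypotheses imposed on the base ring in Proposition \ref{4.2}.

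Next I would verify the hypotheses on $B$. By assumption $B=A/P$ is a Noetherian domain containing $R'$. By Proposition \ref{avfin}(2a) we have ${\rm tr.deg}_{R'} B = {\rm tr.deg}_{R/\p} A/P = 1$, so the transcendence-degree condition ${\rm tr.deg}_{R'} B \le 1$ holds. The remaining hypothesis is that $B \otimes_{R'} K'$ be a finitely generated $K'$-algebra, where $K'$ is the field of fractions of $R'$. This is exactly where Proposition \ref{avfin}(2c) is used: since $R'$ is a one-dimensional local domain, for any nonzero $f \in \m \setminus \p$ one has $R'[f^{-1}]=k(\p)=K'$, and part (2c) asserts precisely that $B \otimes_{R'} k(\p)$ is finitely generated over $k(\p)=K'$. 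Hence the fiber hypothesis of Proposition \ref{4.2} is verified.

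With all hypotheses in place, I would apply Proposition \ref{4.2} to the pair $R' \subseteq B$ to conclude that $B=A/P$ is finitely generated over $R'=R/\p$, which is the desired statement.

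The substance of the argument is almost entirely bookkeeping: the real content was already extracted in Proposition \ref{avfin}, whose part (2c) secures the crucial generic-fiber finiteness and whose part (2a) pins down the transcendence degree. The only point requiring a moment's care is confirming that $R/\p$ inherits completeness and the finiteness of $[\bar k:k]$ so that Proposition \ref{4.2} genuinely applies; once that is checked, the corollary follows immediately. I do not expect any serious obstacle here, since the heavy lifting has been relegated to the preceding proposition.
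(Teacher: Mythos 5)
Your argument is correct and is essentially the paper's own proof: the paper likewise reduces to Proposition \ref{4.2} applied to $R/\p \subseteq A/P$, citing Proposition \ref{avfin}(2) for the transcendence degree and the finite generation of $A/P \otimes_{R/\p} k(\p)$. Your write-up just makes explicit the routine verifications (completeness and one-dimensionality of $R/\p$, identification of $k(\p)$ with the fraction field) that the paper leaves implicit.
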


\begin{proof}
Since $R/\p$ is a one-dimensional complete local domain with
residue field $k$, the assertion follows from Proposition \ref{avfin}(2) and
Proposition \ref{4.2}.
\end{proof}

We also need the following easy lemma.

\begin{lem}\label{lemma_add}
Let $R$ be a Noetherian domain and $A$ a Krull domain containing $R$. 
Let $\pi$ be a nonzero element in $A$ with $\pi A\ne A$.
Then the following assertions hold.
\begin{enumerate}
\item[\rm(1)]
If $A/P$ is finitely generated over $R$ for every
$P\in{\rm Ass}_A(A/\pi A)$, then $A/\pi A$ is 
finitely generated over $R$.
\item[\rm(2)]
If $A/P$ is a finite $R$-module for every $P\in{\rm Ass}_A(A/\pi A)$,
then $A/\pi A$ is a finite $R$-module.
\end{enumerate}
\end{lem}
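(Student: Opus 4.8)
The plan is to pass from $A/\pi A$ to the finitely many primary components of $\pi A$, and then to dissect each primary component through its symbolic-power filtration, so that the whole problem is reduced to a single height-one prime together with the given hypothesis on the rings $A/P$. The Krull structure of $A$ is what makes the first reduction clean, while the delicate point will be a finite-generation statement for the graded pieces of the symbolic filtration.

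First I would record the Krull-theoretic setup. Since $\pi\neq 0$ and $\pi A\neq A$, the set $\Ass_A(A/\pi A)=\{P_1,\dots,P_r\}$ is finite and consists exactly of the height-one primes minimal over $\pi A$, and one has the primary decomposition $\pi A=\bigcap_{i=1}^{r}Q_i$ with $Q_i=P_i^{(n_i)}$, $n_i=v_{P_i}(\pi)$, a $P_i$-primary divisorial ideal. This yields a natural injection $A/\pi A\hookrightarrow C:=\prod_{i=1}^{r}A/Q_i$, whose kernel is $\bigl(\bigcap_i Q_i\bigr)/\pi A=0$. Each factor $A/Q_i$ is a \emph{cyclic} $A/\pi A$-module, being a quotient ring of $A/\pi A$, so $C$ is a finite $A/\pi A$-module, and in particular integral over $A/\pi A$.

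Granting for the moment the desired property for each $A/Q_i$, the two assertions follow. For (2), if every $A/Q_i$ is a finite $R$-module then so is $C$, and $A/\pi A$, being an $R$-submodule of the finite module $C$ over the Noetherian ring $R$, is itself a finite $R$-module. For (1), if every $A/Q_i$ is a finitely generated $R$-algebra then $C$ is a finitely generated $R$-algebra integral over $A/\pi A$, and Lemma~\ref{integral} gives that $A/\pi A$ is finitely generated over $R$. Thus both reduce to treating a single component $A/Q_i=A/P_i^{(n_i)}$; write $P=P_i$, $Q=Q_i$, $n=n_i$. The symbolic-power filtration $A/Q\supseteq P/Q\supseteq P^{(2)}/Q\supseteq\cdots\supseteq P^{(n)}/Q=0$ has successive quotients the $A/P$-modules $M_j:=P^{(j)}/P^{(j+1)}$ for $0\le j\le n-1$ (note $P\cdot P^{(j)}\subseteq P^{(j+1)}$). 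I would show each $M_j$ is a finitely generated $A/P$-module; granting this, for (2) each $M_j$ is then a finite $R$-module (as $A/P$ is), so the iterated extension $A/Q$ is a finite $R$-module, and for (1) the nilpotent ideal $P/Q$ of $A/Q$ becomes a finitely generated $A/Q$-module while $(A/Q)/(P/Q)=A/P$ is a finitely generated $R$-algebra, whence $A/Q$ is a finitely generated $R$-algebra (adjoin lifts of algebra generators of $A/P$ and module generators of $P/Q$).

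The main obstacle is exactly this last point: the finite generation of $M_j=P^{(j)}/P^{(j+1)}$ as an $A/P$-module. When $A$ is Noetherian it is immediate, since $P^{(j)}$ is then a finitely generated ideal and $M_j$ is a finitely generated $A$-module annihilated by $P$. In the general Krull case $P^{(j)}$ need not be finitely generated, and more care is needed. My approach would be to localise at $P$: as $A_P$ is a discrete valuation ring, choosing $t\in P$ with $v_P(t)=1$ produces, via multiplication by $t^j$, an injection $A/P\hookrightarrow M_j$ with torsion cokernel, and $M_j$ is torsion-free of rank one, hence embeds into $\kappa(P)=\Frac(A/P)$ as an $A/P$-submodule containing $A/P$. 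The crux is then to prove that this submodule has bounded denominators, i.e. that $M_j$ is a genuine fractional ideal of $A/P$; since $A/P$ is Noetherian (being finite, resp. finitely generated, over the Noetherian ring $R$), every fractional ideal is finitely generated, which is all that is required. Controlling these denominators from the valuation structure of $A_P$ and the Noetherianity of $A/P$ alone, \emph{without} assuming $P$ finitely generated, is the delicate heart of the argument, and is where I expect the real work to lie.
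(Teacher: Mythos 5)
Your opening reduction is exactly the paper's: the primary decomposition $\pi A=\bigcap_i P_i^{(e_i)}$, the finite integral embedding $A/\pi A\hookrightarrow\prod_i A/P_i^{(e_i)}$, and the passage back via Lemma~\ref{integral} in case (1) and via ``submodule of a finite module over a Noetherian ring'' in case (2) all match. The divergence, and the gap, is in what happens to a single component $A/P^{(e)}$. The paper notes that $A/P$ is Noetherian under either hypothesis and then simply invokes the Mori--Nishimura theorem (\cite[Theorem 12.7]{M}) to conclude that $A/P^{(e)}$ is Noetherian, after which the step from the reduced quotient $A/P$ to $A/P^{(e)}$ is a cited lemma of \cite{DO2}. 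You instead undertake to reprove the relevant half of Mori--Nishimura by hand through the symbolic filtration, correctly isolate the crux --- finite generation of $M_j=P^{(j)}/P^{(j+1)}$ as an $A/P$-module --- and then leave it unproved, saying you ``expect the real work to lie'' there. As written, the proposal is therefore not a complete proof: the one step that is not formal is exactly the one you defer.

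Moreover, the route you sketch for closing that gap would not succeed as stated. You propose to bound denominators ``from the valuation structure of $A_P$ and the Noetherianity of $A/P$ alone''; this cannot work, because a torsion-free rank-one $A/P$-submodule of $\Frac(A/P)$ need not be a fractional ideal (compare $\bZ[1/2]\subset\bQ$ over $\bZ$), and nothing seen from $A_P$ alone excludes this. The correct argument uses the \emph{global} Krull structure of $A$: choose $t$ with $v_P(t)=j$, let $Q_1,\dots,Q_s$ be the other height-one primes containing $t$, with $m_i=v_{Q_i}(t)$, and pick $d\in\bigcap_i Q_i^{(m_i)}\setminus P$ (possible since each $Q_i^{(m_i)}\not\subseteq P$ and $P$ is prime, so the product of these ideals is not contained in $P$). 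For $x\in P^{(j)}$ one checks $v_Q(dx/t)\ge 0$ at every essential valuation of $A$, so $dx/t\in A$, and $x\mapsto \overline{dx/t}$ gives a well-defined injective $A/P$-linear map $M_j\to A/P$ (its kernel is computed by $v_P$, which forces $x\in P^{(j+1)}A_P\cap A=P^{(j+1)}$). Thus $M_j$ is isomorphic to an ideal of the Noetherian ring $A/P$, hence finitely generated, and the rest of your filtration argument then goes through. With this supplied, your proof is a self-contained, slightly longer variant of the paper's; without it, the proposal rests on an unestablished claim that is precisely the content of the theorem the paper cites.
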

\begin{proof}
Let ${\rm Ass}_A(A/\pi A)=\{P_1,\dots,P_n\}$, and let
$\pi A= {P_1}^{(e_1)}\cap{P_2}^{(e_2)}\cap \cdots \cap {P_n}^{(e_n)}$
be the primary decomposition of $\pi A$. Then
\begin{equation}\label{deco}
A/{\pi A} \hookrightarrow B:=A/{P_1}^{(e_1)} \times A/{P_2}^{(e_2)} 
\times \cdots \times A/{P_n}^{(e_n)}
\end{equation}
is a finite integral extension. Hence if $B$ is finitely generated over
$R$, then so is $A/\pi A$ by Lemma \ref{integral}, and if $B$ is a finite $R$-module,
then so is $A/\pi A$ as $R$ is Noetherian.

Now, let $P\in {\rm Ass}_A(A/\pi A)$ and let $e$ be a positive
integer. Note that in both the cases (1) and (2), $A/P$ is
Noetherian, and hence so is $A/P^{(e)}$ by \cite[Theorem 12.7]{M}.
Note also that, setting $C:=A/P^{(e)}$, we have $C/\sqrt{(0)}=
A/P$. Therefore if $A/P$ is finitely generated over $R$
(resp. a finite $R$-module), then $A/P^{(e)}$ is also
finitely generated over $R$ (resp. a finite $R$-module)
(cf. \cite[Lemma 4.1]{DO2}). Thus the above ring $B$ in (\ref{deco}) 
is finitely generated over $R$ for the case (1), and is a finite $R$-module
for the case (2). This completes the proof.
\end{proof}

We are now ready to prove our main theorem. 

\begin{thm}\label{Main}
Let $(R, \m)$ be a complete two-dimensional Noetherian local domain
whose residue field $k$ satisfies the condition 
$[\bar{k}:k]<\infty$, where $\bar{k}$ is the algebraic closure of $k$.
Let $A$ be a Krull domain such that $R\subseteq A$ with ${\rm tr.deg}_R A = 1$.
Suppose $A$ satisfies the following hypotheses.
\begin{enumerate}
\item[\rm (I)] There exists a nonzero element $\pi\in\m$ such that 
$A[\pi^{-1}]$ is finitely generated over $R[\pi^{-1}]$.
\item[\rm (II)] $\hgt (P \cap R)=1$ for every $P\in{\rm Ass}_A(A/\pi A)$.
\end{enumerate} 
Then the following conditions are equivalent.
\begin{enumerate}
\item [\rm (i)] $A$ is finitely generated over $R$.
\item [\rm (ii)] $A$ is Noetherian and ${\rm tr.deg}_{R/\p}\, A/P >0$ 
for each $P\in{\rm Ass}_A(A/\pi A)$.
\item[\rm (iii)] $A/\pi A$ is Noetherian and ${\rm tr.deg}_{R/\p}\, A/P >0$ 
for each $P\in{\rm Ass}_A(A/\pi A)$.
\item [\rm (iv)] $A/P$ is Noetherian and ${\rm tr.deg}_{R/\p}\, A/P >0$ 
for each $P\in{\rm Ass}_A(A/\pi A)$. 
\end{enumerate}
\end{thm}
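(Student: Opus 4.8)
The plan is to establish the cycle $(\mathrm{i})\Rightarrow(\mathrm{ii})\Rightarrow(\mathrm{iii})\Rightarrow(\mathrm{iv})\Rightarrow(\mathrm{i})$. Observe at the outset that hypothesis (II) says precisely that every $P\in\Ass_A(A/\pi A)$ has $\hgt(P\cap R)=1$, so that in the notation of Proposition \ref{avfin} we have $\Delta=\Ass_A(A/\pi A)$. For $(\mathrm{i})\Rightarrow(\mathrm{ii})$, finite generation over the Noetherian ring $R$ gives that $A$ is Noetherian; and for each $P\in\Ass_A(A/\pi A)$ we have $\hgt P=1$, since $P$ is an associated prime of the principal ideal $\pi A$ in the Krull domain $A$, while $\hgt\p=1$ by (II). As $R$ is a complete local domain it is excellent, hence universally catenary, so the final clause of Lemma \ref{DE} gives ${\rm tr.deg}_{R/\p}A/P={\rm tr.deg}_RA=1>0$. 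The implications $(\mathrm{ii})\Rightarrow(\mathrm{iii})\Rightarrow(\mathrm{iv})$ are then immediate weakenings: if $A$ is Noetherian so is its quotient $A/\pi A$, and if $A/\pi A$ is Noetherian then so is each further quotient $A/P$ (note $\pi\in P$); the transcendence-degree condition is common to all three statements and is simply carried along.

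The substance lies in $(\mathrm{iv})\Rightarrow(\mathrm{i})$. First I would invoke Corollary \ref{avfincor}. Its hypotheses hold: $A$ is a Krull domain meeting conditions (I)--(III) of Proposition \ref{avfin} --- with $\Delta=\Ass_A(A/\pi A)$ by (II) and the fibre positivity ${\rm tr.deg}_{R/\p}A/P>0$ supplied by (iv) --- and $A/P$ is Noetherian for each such $P$, again by (iv). The corollary therefore yields that $A/P$ is finitely generated over $R/\p$, hence over $R$, for every $P\in\Ass_A(A/\pi A)$. Feeding this into Lemma \ref{lemma_add}(1), applied to the Krull domain $A$ and the element $\pi$, I conclude that $A/\pi A$ is a finitely generated $R$-algebra.

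It remains to lift the two facts now in hand --- $A[\pi^{-1}]$ finitely generated over $R[\pi^{-1}]$ (hypothesis (I)) and $A/\pi A$ finitely generated over $R$ --- to the conclusion that $A$ itself is finitely generated over $R$; this is the main obstacle. The natural mechanism is to pick a finitely generated $R$-subalgebra $A_0\subseteq A$ that both generates $A[\pi^{-1}]$ upon inverting $\pi$ and surjects onto $A/\pi A$, and then to force $A_0=A$ through Lemma \ref{easy} with $t=\pi$, for which one must verify $\pi A\cap A_0=\pi A_0$. Here the difficulty is genuine: the surjection $A_0\twoheadrightarrow A/\pi A$ gives only $A_0+\pi A=A$, and upgrading this to $\pi A\cap A_0=\pi A_0$ (equivalently, to an isomorphism $A_0/\pi A_0\cong A/\pi A$) is not formal, the Krull condition $\bigcap_n\pi^nA=0$ alone being insufficient. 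It is exactly at this junction that the completeness --- equivalently, the excellence --- of $R$ must enter, and I would close the argument by applying the finite-generation criterion for algebras over excellent rings established in the Appendix (the analogue for excellent base rings of the result of \cite{Ot}); proving that criterion is the crux of the whole theorem.
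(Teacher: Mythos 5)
Your route is the paper's route: (i)$\Rightarrow$(ii) via the Krull property and Lemma \ref{DE}, the trivial weakenings (ii)$\Rightarrow$(iii)$\Rightarrow$(iv), and then, for (iv)$\Rightarrow$(i), Corollary \ref{avfincor} plus Lemma \ref{lemma_add}(1) to get $A/\pi A$ finitely generated over $R$, followed by the Appendix criterion over excellent rings (Proposition \ref{toya}) applied to $\pi$. Your intermediate digression about choosing $A_0$ and using Lemma \ref{easy} is a dead end that you correctly abandon, so it does no harm. The one real omission is that you never verify hypothesis (III) of Proposition \ref{toya}: that every minimal prime ideal $P$ of $\pi A$ has $\hgt P=1$ \emph{and satisfies the dimension equality relative to $R$}. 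The height statement is immediate since $A$ is a Krull domain, but the dimension-equality statement is a genuine hypothesis without which the Appendix proposition cannot be invoked; it is supplied by Proposition \ref{avfin}(2b), whose hypotheses you have already checked when invoking Corollary \ref{avfincor}, so the gap is easily closed --- but it must be stated, since this is precisely where the fibre condition ${\rm tr.deg}_{R/\p}A/P>0$ and hypothesis (II) of the theorem do work beyond producing finite generation of $A/\pi A$.
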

\begin{proof}
(i) $\Rightarrow$ (ii).
$A$ is Noetherian by Hilbert Basis Theorem.  Let $P\in{\rm Ass}_A(A/\pi A)$
and set $\p:=P \cap R$. Then $\hgt P=1$ because $A$ is Krull. 
Therefore, since the complete local ring $R$ is universally
catenary (cf. \cite[Theorem 29.4(ii)]{M}) and $A$ is finitely generated over $R$, 
by Lemma \ref{DE}, we have ${\rm tr.deg}_{R/\p} \, A/P ={\rm tr.deg}_RA=1$. 

\smallskip

(ii) $\Rightarrow$ (iii) and (iii) $\Rightarrow$ (iv) are trivial.

\smallskip

(iv) $\Rightarrow$ (i). 
Note that the Noetherian complete local ring $R$ is an excellent local ring (\cite[p. 260, 34.B]{Ma}).
We now verify that the element $\pi$ in the Krull domain $A$ satisfies all the hypotheses of
Proposition \ref{toya} in Section 5.

By hypothesis (I), $A[{\pi}^{-1}]$ is a finitely generated $R$-algebra. 

The hypothesis (II) and the conditions in (iv) show, by Corollary \ref{avfincor}, that
$A/P$ is finitely generated over $R/(P \cap R)$ for every $P\in{\rm Ass}_A(A/\pi A)$. 
Therefore, by Lemma \ref{lemma_add}, $A/\pi A$ is finitely generated over $R$. 

Finally, let $P$ be a minimal prime ideal of $\pi A$.
Then $\hgt P=1$ as $A$ is a Krull domain.
Also, by Proposition \ref{avfin}(2b), $P$ satisfies the dimension equality relative to $R$. 

Therefore, applying Proposition \ref{toya}, we conclude that $A$ is 
finitely generated over $R$.
\end{proof}

\begin{cor}\label{Mainc}
Let $(R,\m)$ and $k$ be as in Theorem \ref{Main} and let $A$ be
a Noetherian domain such that $R\subseteq A$ with ${\rm tr.deg}_R A = 1$.
Suppose $A$ satisfies the following hypotheses.
\begin{enumerate}
\item[\rm (I)] There exists a nonzero element $\pi\in\m$ such that 
$A[\pi^{-1}]$ is finitely generated over $R[\pi^{-1}]$.
\item[\rm (II)] $\hgt (P \cap R)=1$
 and ${\rm tr.deg}_{R/(P\cap R)}\, A/P >0$ for every $P\in{\rm Ass}_A(A/\pi A)$.
\end{enumerate}
Then $A$ is finitely generated over $R$.
\end{cor}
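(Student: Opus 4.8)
The plan is to deduce the statement from Theorem \ref{Main} by passing to the normalisation, thereby reducing the Noetherian case to the Krull case already treated there. Let $\bar A$ denote the integral closure of $A$ in its field of fractions. By the Mori--Nagata theorem (cf. \cite[Theorem 33.10]{N}), $\bar A$ is a Krull domain, and since $\bar A$ is integral over $A$ we have $R\subseteq\bar A$ with ${\rm tr.deg}_R\bar A={\rm tr.deg}_RA=1$. I would verify that $\bar A$ satisfies the hypotheses (I) and (II) together with the equivalent condition (iv) of Theorem \ref{Main}, for the same element $\pi$. Theorem \ref{Main} would then give that $\bar A$ is finitely generated over $R$, and since $\bar A$ is integral over $A$ and $A\subseteq\bar A$, Lemma \ref{integral} would yield that $A$ itself is finitely generated over $R$.

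For hypothesis (I) applied to $\bar A$: localisation commutes with normalisation, so $\bar A[\pi^{-1}]$ is the integral closure of $A[\pi^{-1}]$; as $A[\pi^{-1}]$ is finitely generated over the Nagata domain $R[\pi^{-1}]$ it is itself Nagata, whence $\bar A[\pi^{-1}]$ is a finite $A[\pi^{-1}]$-module and thus finitely generated over $R[\pi^{-1}]$. For hypothesis (II), let $Q\in{\rm Ass}_{\bar A}(\bar A/\pi\bar A)$; since $\bar A$ is Krull we have $\hgt Q=1$, and putting $P:=Q\cap A$ one gets $P\in{\rm Ass}_A(A/\pi A)$ by \cite[Theorem 33.11]{N}, so that $\hgt(Q\cap R)=\hgt(P\cap R)=1$ by hypothesis (II) of the corollary. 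The positivity of transcendence degree demanded in (iv) is then immediate, for $\bar A/Q$ is integral over $A/P$ and hence ${\rm tr.deg}_{R/\p}\bar A/Q={\rm tr.deg}_{R/\p}A/P>0$, where $\p=P\cap R$.

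The main obstacle, and the only delicate point, is the remaining assertion in (iv): that $\bar A/Q$ is Noetherian. Since $A$ is Noetherian, $A/P$ is Noetherian; moreover hypothesis (II) of the corollary forces $\Delta={\rm Ass}_A(A/\pi A)$, so $A$ satisfies all hypotheses of Proposition \ref{avfin}, and Corollary \ref{avfincor} applies to show that $A/P$ is finitely generated over $R/\p$. As $R/\p$ is a one-dimensional complete local domain, it is excellent and Nagata, so $A/P$ is a Nagata domain. Using Proposition \ref{avfin}(2a) (which gives ${\rm tr.deg}_{R/\p}A/P=1$) and the dimension inequality of Lemma \ref{DE}, one checks that $\hgt P=1$; thus $A_P$ is a one-dimensional Noetherian local domain dominated by the discrete valuation ring $\bar A_Q$ inside the common fraction field $\Frac(A)$. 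The Krull--Akizuki theorem (cf. \cite{M}) then shows that the residue extension $\kappa(Q)/\kappa(P)$ is finite, where $\kappa(P)=\Frac(A/P)$ and $\kappa(Q)=\Frac(\bar A/Q)$. Consequently $\bar A/Q$ is contained in the integral closure of $A/P$ in the finite extension $\kappa(Q)$, which is a finite $A/P$-module because $A/P$ is Nagata; hence $\bar A/Q$ is a finite module over the Noetherian ring $A/P$ and is therefore Noetherian. This verifies (iv), and the reduction outlined above then completes the argument. I expect the finiteness of $\kappa(Q)/\kappa(P)$ to be the crux, and it is precisely where the completeness of $R$ enters, through the excellence of $R/\p$ that propagates the Nagata property from $A/P$ to $\bar A/Q$.
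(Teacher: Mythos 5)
Your proof is correct and follows essentially the same route as the paper: pass to the normalisation $\bar A$, verify hypotheses (I), (II) and condition (iv) of Theorem \ref{Main} using Proposition \ref{avfin}, Corollary \ref{avfincor} and the Nagata property, and then descend via Lemma \ref{integral}. The only (harmless) local difference is that you obtain the finiteness of $\Frac(\bar A/Q)$ over $\Frac(A/P)$ from the Krull--Akizuki theorem applied to $A_P\subseteq \bar A_Q$, whereas the paper deduces it from the module-finiteness of $\bar A[f^{-1}]$ over the Nagata ring $A[f^{-1}]$ for $f\in\m\setminus\p$.
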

\begin{proof}
Let $D$ be the normalisation of $A$. $D$ is a Krull domain by \cite[Theorem 33.10]{N}.
Since $D$ is integral over $A$, by
Lemma \ref{integral}, it suffices to show that $D$ is finitely generated over $R$.
We verify that the Krull domain
$D$ satisfies all the general hypotheses and condition (iv) 
of Theorem \ref{Main}.
Let $Q\in{\rm Ass}_D(D/\pi D)$, $P=Q \cap A$ and $\p=Q\cap R=P \cap R$.
We will show:
\begin{enumerate}
\item[\rm(a)] $\hgt \p =1$.
\item[\rm(b)] For every $f \in \m \setminus \p$,  $D[f^{-1}]$ is finitely generated over $R[f^{-1}]$.
\item[\rm(c)] $D/Q$ is Noetherian.
\item[\rm(d)] ${\rm tr.deg}_{R/\p}D/Q>0$.
\end{enumerate}  

(a) By \cite[Theorem 33.10]{N}, $P\in{\rm Ass}_A(A/\pi A)$ and hence $\hgt \p =1$ by hypothesis (II).

(b) Fix $f \in \m \setminus \p$.  By Proposition \ref{avfin} (1), $A[f^{-1}]$
is finitely generated over the Nagata ring $R[f^{-1}]$ and hence $A[f^{-1}]$ is also a Nagata ring.
Thus $D[f^{-1}]$, being the normalisation of the Nagata ring $A[f^{-1}]$, is a finite $A[f^{-1}]$-module.
Hence, $D[f^{-1}]$ is finitely generated over $R[f^{-1}]$.

(c) By Corollary \ref{avfincor}, $A/P$ is finitely generated over the Nagata ring $R$ and hence
$A/P$ is a Nagata ring. Since $D[f^{-1}$ is a finite $A[f^{-1}]$-module, it follows 
that the field of fractions of $D/Q$ is a finite
extension of the field of fractions of $A/P$. Since $D/Q$ is integral over the Nagata ring $A/P$,
it then follows that $D/Q$
is a finite $A/P$-module; in particular $D/Q$ is Noetherian.

(d) Since $D/Q$ is a finite $A/P$-module, ${\rm tr.deg}_{R/\p}D/Q = {\rm tr.deg}_{R/(P\cap R)}\, A/P >0$
by hypothesis (II).

\smallskip

Thus, $D$ is finitely generated over $R$ by Theorem \ref{Main} and hence $A$ is finitely generated over $R$
by Lemma \ref{integral}.
\end{proof}

In the next result, we will see that
the hypothesis ``$\hgt(P \cap R)=1$ for $P\in{\rm Ass}_A(A/\pi A)$''
in the above result can be replaced by the condition ``$A$ is $R$-flat''.

\begin{cor}\label{rx}
Let $(R, \m)$ be a complete two-dimensional regular local domain
with residue field $k$. Suppose that the algebraic closure
$\bar{k}$ of $k$ is a finite extension of $k$. 
Let $A$ be a Noetherian domain such that $A$ is a flat $R$-algebra
with ${\rm tr.deg}_R A = 1$. 
Suppose that there exists $\pi \in \m$ such that $A[\pi^{-1}]$ 
is a finitely generated $R[\pi^{-1}]$-algebra
and ${\rm tr.deg}_{R/(P\cap R)}\, A/P >0$  
for each $P\in {\rm Ass}_A(A/\pi A)$.  
Then $A$ is finitely generated over $R$.
\end{cor}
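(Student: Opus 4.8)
The plan is to deduce Corollary~\ref{rx} from Corollary~\ref{Mainc} by showing that the flatness hypothesis forces the height condition ``$\hgt(P\cap R)=1$ for every $P\in\operatorname{Ass}_A(A/\pi A)$'' required by the latter. Since $R$ is regular local of dimension two, flatness of $A$ over $R$ is precisely the tool that controls how associated primes of $\pi A$ sit over $R$, so the entire content is a local-algebra argument about these primes. Once this height statement is verified, the remaining hypotheses of Corollary~\ref{Mainc}---namely that $A$ is a Noetherian domain containing $R$ with $\td_R A = 1$, that $A[\pi^{-1}]$ is finitely generated over $R[\pi^{-1}]$, and that $\td_{R/(P\cap R)}A/P>0$ for each such $P$---are all either directly assumed in the statement of Corollary~\ref{rx} or follow immediately, and finite generation of $A$ over $R$ is then concluded.

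The heart of the argument is thus the following claim: if $A$ is $R$-flat and $P\in\operatorname{Ass}_A(A/\pi A)$, then $\hgt(P\cap R)=1$. First I would observe that $\p:=P\cap R\ne 0$, since $\pi\in\p$ and $\pi\ne 0$; hence $\hgt\p\in\{1,2\}$, and one must rule out $\hgt\p=2$, i.e.\ $\p=\m$. The key point is that flatness of $A$ over $R$ means $\pi$ behaves like a regular element with good depth properties. Concretely, because $R$ is a two-dimensional regular local ring, I can choose a regular system of parameters and exploit that $A$, being flat over $R$, inherits a relationship between associated primes and the fibre structure. The natural mechanism is that flatness gives $\operatorname{Ass}_A(A/\pi A)$ lying over height-one primes of $R$: since $A$ is $R$-flat and $\pi$ is $R$-regular, $\pi$ is $A$-regular, so $\hgt P=1$ as $A$ is Krull; the flatness then propagates regularity of a second parameter downward to the fibre $A/\p A$ so that $\m$ cannot be an associated prime. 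I would make this precise by using the going-down property of the flat extension $R\hookrightarrow A$ together with the fact that $A_P$ is flat over $R_\p$, forcing $\hgt(P\cap R)\le\hgt P=1$ via the dimension relation for flat local homomorphisms.

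The main obstacle I anticipate is establishing rigorously that $\p\ne\m$, that is, excluding the possibility that some embedded or minimal prime of $\pi A$ contracts to the maximal ideal of $R$. The clean way to handle this is to invoke the flat local homomorphism dimension formula: for the induced flat map $R_\p\to A_P$ one has $\dim A_P=\dim R_\p+\dim(A_P/\p A_P)$, and since $A$ is Krull with $\hgt P=1$ we get $\dim A_P=1$, forcing $\dim R_\p\le 1$, hence $\hgt\p=1$. Alternatively, one can argue via depth: flatness gives that $\pi,\pi_2$ is a regular sequence on $A$ for a suitable second parameter $\pi_2$, so $\operatorname{depth} A_\m\ge 2$ while any $P$ with $P\cap R=\m$ would force $\pi$ alone to generate a primary component of full codimension, contradicting that $P$ is associated to the principal ideal $\pi A$ in the Krull domain setting. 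Having secured $\hgt(P\cap R)=1$, every hypothesis of Corollary~\ref{Mainc} is in place, and finite generation of $A$ over $R$ follows at once.
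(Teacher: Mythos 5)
Your overall reduction is the same as the paper's: everything comes down to showing that flatness forces $\hgt(P\cap R)=1$ for every $P\in\operatorname{Ass}_A(A/\pi A)$, after which Corollary \ref{Mainc} applies. But your primary mechanism for that step has a gap. You write ``since $A$ is Krull with $\hgt P=1$ we get $\dim A_P=1$''; in Corollary \ref{rx}, however, $A$ is only assumed to be a Noetherian domain --- the Krull hypothesis belongs to Theorem \ref{Main}, and Corollary \ref{Mainc} exists precisely to remove it by passing to the normalisation. For a Noetherian domain that is not normal, $\pi A$ may have embedded associated primes of height $\ge 2$, and Corollary \ref{Mainc} demands the height condition for \emph{all} of $\operatorname{Ass}_A(A/\pi A)$, embedded primes included. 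For such a $P$ the flat local dimension formula $\dim A_P=\dim R_\p+\dim(A_P/\p A_P)$ only yields $\dim R_\p\le 2$, which does not exclude $\p=\m$.

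The correct invariant is depth, not dimension, and your ``alternative'' sketch gestures at it without landing: you again invoke ``the Krull domain setting,'' the object $A_\m$ is not what you want, and the claimed contradiction (``$\pi$ alone generating a primary component of full codimension'') is not a precise statement. The paper's argument is one line: $P\in\operatorname{Ass}_A(A/\pi A)$ means $\operatorname{depth}(A_P/\pi A_P)=0$, hence $\operatorname{depth}A_P=1$ because $\pi$ is $A_P$-regular; flatness of $R_\p\to A_P$ gives $\operatorname{depth}R_\p\le\operatorname{depth}A_P=1$; and since $\p\ne 0$ (as $\pi\in\p$) and $R_\p$ is Cohen--Macaulay ($R$ being regular), $\hgt\p=\operatorname{depth}R_\p=1$. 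This works uniformly for minimal and embedded associated primes. Replace your dimension computation by this depth computation and the proof is complete.
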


\begin{proof}
Let $\pi (\ne 0) \in\m$, $P \in {\rm Ass}_A(A/\pi A)$ and $\p = P \cap R$.
Since $A$ is $R$-flat, we have ${\rm depth}R_\p\le {\rm depth}A_P=1$. Hence 
${\rm depth}R_\p=1$, which implies that $\hgt\p=1$, because $R$ is regular.
The result now follows from Corollary \ref{Mainc}.
\end{proof}

\begin{rem}\label{equals1}
{\em The respective proofs show that the condition ``${\rm tr.deg}_{R/(P\cap R)}\, A/P >0$ for every $P\in{\rm Ass}_A(A/\pi A)$''
occurring in the above results may be replaced by the equivalent condition 
``${\rm tr.deg}_{R/(P\cap R)}\, A/P =1$ for every $P\in{\rm Ass}_A(A/\pi A)$''.
}
\end{rem}

With the same notation and assumptions as in Theorem \ref{Main}, 
we shall now give sufficient conditions for the ring 
$A$ to be Noetherian in the case where
${\rm tr.deg}_{R/\p} A/P=0$ for every $P\in{\rm Ass}_A(A/\pi A)$ (Theorem \ref{noeth}).
We begin by recording an auxiliary result. 

\begin{lem}\label{lem2}
Let $(S, \n)$ be a complete one-dimensional Noetherian local domain
whose residue field $k$ satisfies the condition that
$[\bar{k}:k]<\infty$. Suppose that $B$ is a Noetherian domain 
such that $S\subseteq B$, ${\rm tr.deg}_S B= 0$ and $\n B \neq B$. 
Then $B$ is a finite $S$-module. 
\end{lem}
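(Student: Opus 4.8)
The plan is to show that $B$ embeds into its $\pi$-adic completion $\widehat{B}$ for a suitable $\pi$, that $\widehat{B}$ is a finite $S$-module, and then to conclude that $B$, being an $S$-submodule of the finite module $\widehat{B}$ over the Noetherian ring $S$, is itself finite over $S$. First I would fix a nonzero $\pi\in\n$; since $\dim S=1$ the ideal $\pi S$ is $\n$-primary, so $\n^a\subseteq \pi S\subseteq\n$ for some $a$, the $\pi$-adic and $\n$-adic topologies coincide, and $S$ is $\pi$-adically complete. The whole problem then reduces to proving that $B/\pi B$ is a finite-dimensional $k$-vector space.

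The next step is to pin down the ideal-theoretic structure of $B$. For any prime $P$ of $B$ with $\p=P\cap S$, Theorem \ref{C} gives $\hgt P\le \hgt\p+{\rm tr.deg}_S B-{\rm tr.deg}_{S/\p}B/P\le\hgt\p\le 1$, so $\dim B\le 1$ and every height-one prime of $B$ is maximal. Moreover, if a nonzero prime $P$ did not contain $\pi$, then $P\cap S$ would avoid $\pi$, forcing $P\cap S=(0)$ (the only primes of $S$ are $(0)$ and $\n$), and then Theorem \ref{C} with $\p=(0)$ yields $\hgt P+{\rm tr.deg}_{\Frac S}\Frac(B/P)\le 0$, whence $P=(0)$, a contradiction. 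Hence every nonzero prime of $B$ contains $\pi$, so $B/\pi B$ is a zero-dimensional Noetherian, i.e. Artinian, ring; in particular it decomposes as a finite product $\prod_i (B/\pi B)_{M_i}$ over the (finitely many) maximal ideals $M_1,\dots,M_n$ of $B$, each containing $\pi$.

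The crux, and the place where $[\bar{k}:k]<\infty$ is indispensable, is to show each factor $(B/\pi B)_{M_i}$ is finite over $k$. For each $i$, applying Lemma \ref{DE} with $\p=M_i\cap S=\n$ gives ${\rm tr.deg}_k (B/M_i)\le{\rm tr.deg}_S B=0$, so the residue field $\ell_i:=B/M_i$ is algebraic over $k$, hence embeds into $\bar{k}$ and therefore satisfies $[\ell_i:k]\le[\bar{k}:k]<\infty$. Since $\dim B_{M_i}=1$, the ideal $\pi B_{M_i}$ is $M_iB_{M_i}$-primary, so $(B/\pi B)_{M_i}$ has finite length with every composition factor isomorphic to $\ell_i$; thus its $k$-dimension equals $(\text{length})\cdot[\ell_i:k]<\infty$. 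Taking the finite product, $B/\pi B$ is finite over $k$. I expect this residue-field step to be the main obstacle: it is exactly here that the argument collapses without the hypothesis, as the example $\mathbb{F}_p[[t]]\subset\overline{\mathbb{F}_p}[[t]]$ illustrates.

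Finally I would pass to completions. Because the $\pi$-adic and $\n$-adic topologies on $B$ agree, the completion $\widehat{B}=\varprojlim B/\pi^n B$ is $\n$-adically complete and separated, and $\widehat{B}/\n\widehat{B}=B/\n B$ is a quotient of $B/\pi B$, hence finite over $k$. The complete form of Nakayama's lemma over the complete local ring $S$ (lifting a $k$-basis and approximating) then shows $\widehat{B}$ is a finite $S$-module. On the other hand, $\pi$ lies in the Jacobson radical of the Noetherian domain $B$, so by the Krull intersection theorem $\bigcap_n\pi^n B=(0)$ and the canonical map $B\to\widehat{B}$ is injective. Therefore $B$ is an $S$-submodule of the finite $S$-module $\widehat{B}$, and since $S$ is Noetherian, $B$ is finite over $S$, as required.
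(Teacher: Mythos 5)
Your proof is correct and follows essentially the same route as the paper's: both use the dimension inequality together with $[\bar{k}:k]<\infty$ to show that the fibre of $B$ over the closed point ($B/\n B$ in the paper via the primary decomposition of $\n B$, $B/\pi B$ for you via the Artinian decomposition and composition series) is a finite $k$-module, and then invoke the completeness of $S$ to lift finiteness to $B$. Your final detour through the $\pi$-adic completion $\widehat{B}$ is harmless but redundant---the paper applies the same completeness criterion (\cite[Theorem 8.4]{M}) directly to $B$, which is $\n$-adically separated because it is a Noetherian domain with $\n B\neq B$.
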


\begin{proof}
Since $B$ is a Noetherian domain, $B$ is separated for the $\n$-adic
topology. Thus, by \cite[Theorem 8.4]{M}, it suffices to show that 
$B/\n B$ is a finite $k$-module.

Let $\n B = Q_1 \cap Q_2\cap \cdots\cap Q_n$ be an irredundant 
primary decomposition of $\n B$ in $B$ and
let $P_i=\sqrt{Q_i}$ for $i=1,\dots,n$. 
Then $P_i \cap S = \n$ for each $i$, because $\dim S=1$.
Since ${\rm tr.deg}_S B = 0$, it then follows from
the dimension inequality that
\[
\hgt P_i + {\rm tr.deg}_k B/P_i \le \hgt \n + {\rm tr.deg}_S B=1
\]
for each $i$. From this we have that $\hgt P_i = 1$ and 
${\rm tr.deg}_k B/P_i = 0$, so that $B/P_i$ is a finite $k$-module
for each $i$, because $[\bar{k}:k]<\infty$.
Since $P_i^r\subseteq Q_i$ for some $r>0$, it thus follows that
each $B/Q_i$ is a finite $k$-module. Note that
\[
B/\n B \hookrightarrow B/Q_1\times B/Q_2 \times \cdots\times B/Q_n
\]
is a finite extension. Therefore $B/\n B$ is a finite $k$-module,
as desired. 
\end{proof}

\begin{lem}\label{field} Let $(S, \n)$ be a one-dimensional local domain and $B$ 
an integral domain containing $S$ such that $\n B=B$ and  ${\rm tr.deg}_S B =0$.
Then $B$ is a field.
\end{lem}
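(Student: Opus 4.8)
My plan is to show that the domain $B$ has no nonzero prime ideal, which is equivalent to $B$ being a field. The argument splits into a statement about how primes of $B$ contract to $S$ (driven by $\n B = B$) and a statement about what ${\rm tr.deg}_S B = 0$ forces once the contraction is $(0)$.

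The first step is to pin down $\Spec S$: since $(S,\n)$ is a one-dimensional local domain, its only primes are $(0)$ and $\n$ (any prime other than $(0)$ lies in $\n$ by locality, and a strict chain $(0)\subsetneq\p\subsetneq\n$ would contradict $\dim S=1$). Next I would use the hypothesis $\n B = B$ to rule out primes of $B$ lying over $\n$: if a prime $Q$ of $B$ satisfied $Q\cap S=\n$, then $\n\subseteq Q$ would give $B=\n B\subseteq Q$, contradicting that $Q$ is proper. Hence every prime $Q$ of $B$ contracts to $(0)$, i.e.\ $Q\cap S=(0)$.

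The heart of the proof is to upgrade ``$Q\cap S=(0)$'' to ``$Q=(0)$''. For this I would pass to $K=\Frac(S)$ and form the localization $B':=(S\setminus\{0\})^{-1}B = B\otimes_S K$. This is again a domain, and because ${\rm tr.deg}_S B=0$ every element of $B$, and therefore every element of $B'$, is algebraic over $K$. A domain that is algebraic over a field is itself a field: for $0\neq d\in B'$, the subring $K[d]$ is a finite-dimensional $K$-algebra which is a domain, hence a field, so $d^{-1}\in K[d]\subseteq B'$. Thus $B'$ is a field, whose only prime is $(0)$. Since the primes of $B$ with $Q\cap S=(0)$ are exactly those surviving in the localization $B'$, and $B'$ has only the zero prime, every such $Q$ must be $(0)$. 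Combining the two steps, $\Spec B=\{(0)\}$, so $B$ is a field.

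The main (and really only nontrivial) point is the passage to $B'$ together with the observation that a domain algebraic over a field is a field; the rest is bookkeeping on prime contractions. I would emphasize that this route uses only ${\rm tr.deg}_S B=0$ and requires no Noetherian hypothesis on $S$ or $B$, so it matches the stated generality of the lemma. Alternatively, if one is willing to assume $S$ Noetherian, the same conclusion follows even more directly from the dimension inequality (Theorem~\ref{C}): for a prime $Q$ with $Q\cap S=(0)$ it gives $\hgt Q + {\rm tr.deg}_{\Frac(S)} B/Q \le \hgt(0) + {\rm tr.deg}_S B = 0$, whence $\hgt Q=0$ and $Q=(0)$ as $B$ is a domain.
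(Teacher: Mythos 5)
Your proof is correct and follows essentially the same route as the paper's: both first use $\n B=B$ and the fact that $\Spec S=\{(0),\n\}$ to force every prime of $B$ to contract to $(0)$, and both then conclude via the observation that a domain algebraic over a field is a field (the paper applies this to $B_P$, noting $K\subseteq B_P$, while you apply it to $(S\setminus\{0\})^{-1}B$ --- an immaterial difference). No gaps.
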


\begin{proof}
Let $P$ be an arbitrary prime ideal of $B$.
Since $(0)$ and $\n$ are the only prime ideals of the one-dimensional
local domain $(S, \n)$ and $\n B=B$, we have $P\cap S=(0)$. 

Let $K$ denote the field of fractions of $S$. Since $P\cap S=(0)$,
we have $K\subseteq B_P$.  Since ${\rm tr.deg}_{S} B = 0$, it then follows that
$B_P$ is algebraic over the field $K$. Hence $B_P$ is a field, which
implies that $P=(0)$. Thus, $B$ is a field.
\end{proof}

\begin{cor}\label{fieldcor}
Let $(R, \m)$ be a two-dimensional local domain and $A$ a Krull domain such that $R \subseteq A$,
$\m A=A$ and ${\rm tr.deg}_R A >0$. Let $\pi$ be a nonzero element in $\m$ and $P$ a minimal prime ideal
of $\pi A$. Suppose that ${\rm tr.deg}_{R/(P \cap R)} A/P=0$. Then $A/P$ is a field.
\end{cor}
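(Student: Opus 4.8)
The plan is to reduce the statement directly to Lemma~\ref{field} by passing to the quotient $R/\p$, where $\p := P\cap R$. First I would record two elementary facts about $\p$. Since $P$ is a minimal prime of $\pi A$, we have $\pi\in P$, hence $\pi\in\p$; as $\pi\ne 0$ this gives $\p\ne 0$. On the other hand $P$ is a proper prime ideal of $A$, so the hypothesis $\m A=A$ forces $\m\not\subseteq P$ (otherwise $\m A\subseteq P\subsetneq A$), and therefore $\m\not\subseteq\p$; that is, $0\subsetneq\p\subsetneq\m$.

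Next I would pin down the dimension of $R/\p$. Any prime chain in $R/\p$ lifts to a prime chain in $R$ starting at $\p$; prepending $0\subsetneq\p$ shows its length is at most $\dim R-1=1$, while the chain $\p\subsetneq\m$ realises length $1$. Hence $\hgt\p=1$ and $(R/\p,\ \m/\p)$ is a one-dimensional local domain. Moreover, since $\p=P\cap R\subseteq P$, the inclusion $R\hookrightarrow A$ induces a map $R/\p\to A/P$ of integral domains whose kernel is $(P\cap R)/\p=0$, so $R/\p\hookrightarrow A/P$.

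With $S:=R/\p$, $\n:=\m/\p$ and $B:=A/P$, I would then verify the two remaining hypotheses of Lemma~\ref{field}. The identity $\m A=A$ descends to $\n B=B$: indeed the image of $\m A$ in $A/P$ is on one hand all of $A/P$ (because $\m A=A$) and on the other hand exactly the ideal of $B$ generated by the image of $\m$, namely $\n B$. Furthermore $\td_S B=\td_{R/\p}A/P=0$ is precisely the standing hypothesis. Lemma~\ref{field} then yields that $B=A/P$ is a field, which is the assertion.

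The argument is essentially a verification of hypotheses, so there is no serious obstacle; the only point requiring care is the dimension bookkeeping in the second step, namely deducing $\dim R/\p=1$ from $0\subsetneq\p\subsetneq\m$ and $\dim R=2$ without invoking catenarity, which follows because any longer chain through $\p$ would violate $\dim R=2$. I would also note that the Krull hypothesis on $A$ and the assumption $\td_R A>0$ enter only insofar as they guarantee that $P$ is a proper prime containing the nonzero element $\pi$; the core of the proof uses neither beyond this.
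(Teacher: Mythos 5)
Your proof is correct and follows essentially the same route as the paper: set $S=R/\p$, $\n=\m/\p$, $B=A/P$, check $S\hookrightarrow B$, $\n B=B$ and ${\rm tr.deg}_SB=0$, and apply Lemma~\ref{field}. The only difference is that you spell out the verification that $(S,\n)$ is a one-dimensional local domain (via $0\subsetneq\p\subsetneq\m$ and the chain-lifting bound), which the paper leaves implicit.
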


\begin{proof}
Set $\p:=P\cap R$, $S:=R/\p$, $\n:=\m/\p$ and $B:=A/P$. Then $S \hookrightarrow B$ and identifying $S$ with its image in $B$,
we may assume that $S \subseteq B$.
Since $\m A=A$, we have $\n B=B$. The result now follows from Lemma \ref{field}.  
\end{proof}

\begin{prop}\label{lNoeth}
Let $(R, \m)$ be a two-dimensional complete Noetherian 
local domain whose residue field $k$ satisfies 
the condition that $[\bar{k}:k]<\infty$,
and let $\pi$ be a nonzero prime element of $\m$.
Let $D$ be an integral domain containing $R$ such that 
$D/\pi D$ is a Noetherian domain, $\bigcap_{n \ge 0}\pi^n D = 0$ 
and $\pi D \cap R= \pi R$. 
Then the following assertions hold.
\begin{enumerate}
\item [\rm (1)] If ${\rm tr.deg}_{R/ \pi R} D/\pi D = 0$ and $\m D \neq D$,
then $D$ is a finite $R$-module.
\item [\rm (2)] If ${\rm tr.deg}_R D >0$, then either 
${\rm tr.deg}_{R/ \pi R} D/\pi D > 0$ or $\m D = D$.
\item [\rm (3)] If ${\rm tr.deg}_{R/ \pi R} D/\pi D = 0$ and
$\m D=D$, then $D/\pi D$ is a field. 
In addition, if $D[\pi^{-1}]$ is Noetherian, 
then $D$ is Noetherian.
\end{enumerate}
\end{prop}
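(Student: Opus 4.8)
The plan is to reduce everything modulo $\pi$ and feed the resulting one-dimensional picture into Lemmas \ref{lem2}, \ref{field}, and \ref{cohen}. Set $S:=R/\pi R$ and $\n:=\m/\pi R$. Since $\pi$ is a nonzero prime element of the two-dimensional complete local domain $R$ (which is catenary), $S$ is a complete \emph{one}-dimensional Noetherian local domain whose residue field is again $k$, so the hypothesis $[\bar k:k]<\infty$ is inherited by $S$. The condition $\pi D\cap R=\pi R$ says precisely that the induced map $S=R/\pi R\to D/\pi D$ is injective, so I may regard $S\subseteq B$, where $B:=D/\pi D$ is a Noetherian domain by assumption. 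Finally, since $\pi\in\m$ one checks that $\n B=\m D/\pi D$, whence $\n B\neq B$ (resp. $\n B=B$) is equivalent to $\m D\neq D$ (resp. $\m D=D$).

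For (1), assume $\td_S B=0$ and $\m D\neq D$, so $\n B\neq B$. Then $(S,\n)$ and $B$ satisfy all the hypotheses of Lemma \ref{lem2}, which gives that $B=D/\pi D$ is a finite $S$-module, hence a finite $R$-module. To pass from $D/\pi D$ to $D$ I would invoke the topological Nakayama Lemma \cite[Theorem 8.4]{M}, exactly as in the proof of Lemma \ref{lem2} but now for the ideal $\pi R$ rather than the maximal ideal: lifting a finite generating set of $D/\pi D$ over $R$ to elements $\omega_1,\dots,\omega_m\in D$, these generate $D/\pi D$ over $R$, and since $D$ is $\pi$-adically separated (by the hypothesis $\bigcap_n\pi^n D=0$) while $R$ is complete in the $\pi R$-adic topology, they generate $D$ over $R$. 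I expect this to be the one genuinely delicate step, because it uses completeness of $R$ with respect to a \emph{non-maximal} ideal; this is justified by the standard fact that a complete Noetherian local ring is complete in the $I$-adic topology for every ideal $I\subseteq\m$ (separatedness follows from $\bigcap_n\pi^nR\subseteq\bigcap_n\m^n=0$, and convergence from the facts that a $\pi R$-adic Cauchy sequence is $\m$-adic Cauchy and that each $\pi^nR$ is $\m$-adically closed). This yields that $D$ is a finite $R$-module.

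Part (2) is then the contrapositive of (1). Assume $\td_R D>0$; if both $\td_{R/\pi R}D/\pi D=0$ and $\m D\neq D$ held, then by (1) $D$ would be module-finite, hence integral, over $R$, forcing $\td_R D=0$ and contradicting $\td_R D>0$. Therefore at least one of the alternatives $\td_{R/\pi R}D/\pi D>0$ or $\m D=D$ must hold.

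For (3), assume $\td_S B=0$ and $\m D=D$, so $\n B=B$. Applying Lemma \ref{field} to the one-dimensional local domain $(S,\n)\subseteq B$ shows that $B=D/\pi D$ is a field; equivalently, $\pi D$ is a maximal ideal of $D$. If in addition $D[\pi^{-1}]$ is Noetherian, then $\pi$ (a nonzero element of $D$) satisfies all three hypotheses of Lemma \ref{cohen}, namely (I) $D[\pi^{-1}]$ is Noetherian, (II) $\pi D$ is maximal, and (III) $\bigcap_n\pi^n D=0$; hence $D$ is Noetherian. Aside from the completeness-based lifting in (1), every step is a direct application of the quoted lemmas, so the main work is organizing the reductions modulo $\pi$ correctly and verifying the transfer of the finiteness and fibre hypotheses to $S$ and $B$.
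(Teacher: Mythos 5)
Your proposal is correct and follows essentially the same route as the paper: reduce modulo $\pi$ to the one-dimensional complete local domain $S=R/\pi R$, apply Lemma \ref{lem2} plus the $\pi$-adic version of \cite[Theorem 8.4]{M} for (1), deduce (2) as the contrapositive, and use Lemma \ref{field} (the paper cites its Corollary \ref{fieldcor}) together with Lemma \ref{cohen} for (3). The extra care you take in justifying $\pi$-adic completeness of $R$ and separatedness of $D$ is exactly the point the paper also flags, so there is nothing to add.
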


\begin{proof}
Set $S=R/\pi R$, $\n=\m/\pi R$ and $B=D/\pi D$.
Then, by assumptions, $(S,\n)$ is a complete one-dimensional Noetherian local
domain with residue field $k$ satisfying the condition that 
$[\bar{k}:k]<\infty$, $B$ is a Noetherian
domain and $S \hookrightarrow B$ so that,  identifying $S$ with its image in $B$,
we may assume that $S \subseteq B$.

\smallskip

(1) Note that $\n B\ne B$ because $\m D\ne D$. 
Since ${\rm tr.deg}_{S} B = 0$, it follows from
Lemma \ref{lem2} that $B$ is a finite $S$-module. 
On the other hand, since $R$ is $\m$-adically complete
and $\pi\in \m$, $R$ is $\pi$-adically complete, too. 
Since $\bigcap_{n \ge 0} \pi^n D= 0$, it now follows from
\cite[Theorem 8.4]{M} that $D$ is a finite $R$-module.

\smallskip

(2) The assertion follows from (1).

\smallskip

(3) The assertions follow from Corollary \ref{fieldcor} and Lemma \ref{cohen} respectively. 
\end{proof}

We now state our result giving criteria for the ring $A$ to be Noetherian
when $A/P$ is algebraic over $R/(P\cap R)$ for every minimal prime $P$ of $\pi A$.

\begin{thm}\label{noeth}
Let $(R, \m)$ be a two-dimensional complete Noetherian 
local domain whose residue field $k$ satisfies 
the condition that $[\bar{k}:k]<\infty$.
Let $A$ be a Krull domain such that $R\subseteq A$ with ${\rm tr.deg}_R A >0$.
Suppose that the following conditions hold.
\begin{enumerate}
\item[\rm(I)] There exists $\pi\in\m$ such that $A[\pi^{-1}]$ is Noetherian.
\item[\rm(II)] For every minimal prime ideal $P$ of $\pi A$, 
$\hgt (P \cap R)=1$ and ${\rm tr.deg}_{R/(P \cap R)} A/P=0$. 
\end{enumerate}
Then the following statements are equivalent.
\begin{enumerate}
\item[\rm(i)] $A$ is Noetherian.
\item[\rm(ii)] $A/\pi A$ is Noetherian.
\item[\rm(iii)] $\m A=A$.
\end{enumerate}
\end{thm}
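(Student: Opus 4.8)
The plan is to establish the cycle (i) $\Rightarrow$ (ii) $\Rightarrow$ (iii) $\Rightarrow$ (i). Throughout I write $P_1,\dots,P_n$ for the minimal primes of $\pi A$; since $A$ is a Krull domain these are finite in number, each has height $1$, and $\Ass_A(A/\pi A)=\{P_1,\dots,P_n\}$. For each $i$ set $\p_i=P_i\cap R$, $S_i=R/\p_i$ and $B_i=A/P_i$. By hypothesis (II), $\hgt\p_i=1$, so $S_i$ is a complete one-dimensional local domain with residue field $k$ satisfying $[\bar k:k]<\infty$, and $\td_{S_i}B_i=0$. The implication (i) $\Rightarrow$ (ii) is immediate, as $A/\pi A$ is a quotient of the Noetherian ring $A$.

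For (iii) $\Rightarrow$ (i), I would first invoke Corollary \ref{fieldcor}: when $\m A=A$, each $B_i$ is a field, so every $P_i$ is a maximal ideal of height $1$, and these are the only primes of $A$ containing $\pi$. I then split into two cases. If there is a nonunit $b\in A$ with $b\notin P_1\cup\dots\cup P_n$, then $\pi$ becomes a unit in $A_b$, whence $A_b=(A[\pi^{-1}])_b$ is Noetherian by hypothesis (I); moreover no maximal ideal contains both $\pi$ and $b$, so $(\pi,b)A=A$, and Lemma \ref{noethl}(1) gives that $A$ is Noetherian. Otherwise every nonunit of $A$ lies in $P_1\cup\dots\cup P_n$, which forces the $P_i$ to be all of the maximal ideals of $A$; then $\dim A=1$, so the one-dimensional Krull domain $A$ is Dedekind and hence Noetherian.

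The crux is (ii) $\Rightarrow$ (iii), and I would argue by contradiction. Suppose $A/\pi A$ is Noetherian but $\m A\ne A$, and choose a maximal ideal $M$ of $A$ with $M\supseteq\m A$. Then $M\cap R=\m$ and $M\supseteq\pi A$, so $M$ contains some $P_j$; moreover $\hgt M\ge 2$, for if $\hgt M=1$ then $M=P_j$ and $\p_j=\m$ would have height $2$, against (II). Localising at $M$, I claim $A_M$ is module-finite over $R$, which is absurd since $\td_R A_M=\td_R A>0$. To prove the claim, observe that $A_M$ is a Krull domain with $\pi A_M\ne A_M$, and its relevant primes are the $P_iA_M$ with $P_i\subseteq M$. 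Any such $P_i$ has height $1<\hgt M$, hence is not maximal, so $B_i$ is not a field; by Lemma \ref{field} this forces $\n_iB_i\ne B_i$ (where $\n_i=\m/\p_i$), and since $B_i$ is Noetherian, being a quotient of $A/\pi A$, Lemma \ref{lem2} yields that $B_i=A/P_i$ is a finite $S_i$-module, in particular a finite $R$-module. Being module-finite over the complete local domain $S_i$, $B_i$ is local with maximal ideal $M/P_i$, so $A_M/P_iA_M=B_i$. Thus every $Q\in\Ass_{A_M}(A_M/\pi A_M)$ satisfies ``$A_M/Q$ finite over $R$'', and Lemma \ref{lemma_add}(2) shows that $A_M/\pi A_M$ is a finite $R$-module. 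Finally, since $R$ is $\pi$-adically complete (it is $\m$-adically complete and $\pi\in\m$) and $\bigcap_n\pi^nA_M=0$ (as $A_M$ is Krull), the topological Nakayama lemma \cite[Theorem 8.4]{M} lifts this to show that $A_M$ is a finite $R$-module, the desired contradiction.

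The main obstacle is exactly this last implication, where $\pi$ need not be a prime element, so Proposition \ref{lNoeth} cannot be applied directly to $A$. The key manoeuvre is to pass to the minimal primes of $\pi A$ and localise at a maximal ideal $M$ lying over $\m$, which discards the ``field'' components (the $P_i$ that happen to be maximal) and keeps only the ``finite'' components over which $A_M/\pi A_M$ assembles into a finite $R$-module via Lemma \ref{lemma_add}. The delicate points I would verify with care are: that $\hgt M\ge 2$, so that no field component can lie under $M$; that $A_M/P_iA_M=B_i$ for the surviving components, which relies on each $B_i$ being local as a finite algebra over the complete ring $S_i$; and that the hypotheses of \cite[Theorem 8.4]{M} genuinely hold, so that module-finiteness propagates from $A_M/\pi A_M$ up to $A_M$.
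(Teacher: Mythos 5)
Your implication (ii) $\Rightarrow$ (iii) is correct and is essentially the paper's own argument: localise at a maximal ideal $M\supseteq\m A$, use Lemma \ref{lem2} to make each $A/P_i$ (equivalently $A_M/P_iA_M$) a finite module over $R/\p_i$, assemble these via Lemma \ref{lemma_add}(2), and lift with \cite[Theorem 8.4]{M} to contradict ${\rm tr.deg}_RA>0$. (The paper reaches $\m(A'/P')\ne A'/P'$ directly from $\m A'\subseteq MA'$ rather than through your $\hgt M\ge 2$ detour, but that is cosmetic.)

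The implication (iii) $\Rightarrow$ (i) has a genuine gap in your Case 1. You choose a nonunit $b\notin P_1\cup\dots\cup P_n$ and claim that $\pi$ becomes a unit in $A_b$. It does not: since $b\notin P_1$, the prime $P_1$ survives as the proper prime ideal $P_1A_b$ of $A_b$, and $\pi\in P_1A_b$. Indeed your two assertions are incompatible with each other: if $\pi$ were a unit in $A_b$, every prime containing $\pi$ would have to contain $b$, and then $(\pi,b)A\subseteq P_1\ne A$, contradicting the comaximality $(\pi,b)A=A$ that you (correctly) establish. More structurally, no choice of $b$ can make this strategy work with $a=\pi$ in Lemma \ref{noethl}: making $\pi$ a unit in $A_b$ forces $b\in\bigcap_iP_i$, which destroys comaximality, while any $b$ comaximal with $\pi$ leaves all the $P_i$ alive in $A_b$, so that proving $A_b$ Noetherian is no easier than proving $A$ Noetherian. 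The paper instead deduces (iii) $\Rightarrow$ (i) from the Mori--Nishimura criterion \cite[Theorem 12.7]{M}: it suffices to show $A/N$ is Noetherian for every height-one prime $N$; if $\pi\in N$ then $A/N$ is a field by Corollary \ref{fieldcor}, and if $\pi\notin N$ then $N$ is comaximal with $\pi A$ (as the $P_i$ are maximal), so $A/N=A[\pi^{-1}]/NA[\pi^{-1}]$ is Noetherian by hypothesis (I). Your Case 2 (all maximal ideals among the $P_i$, so $\dim A=1$ and $A$ is Dedekind) is fine but only covers a degenerate situation; you should replace Case 1 by an argument of the Mori--Nishimura type.
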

\begin{proof}
(i) $\Rightarrow$ (ii) is obvious. 

\smallskip

(ii) $\Rightarrow$ (iii).
Suppose on the contrary that $\m A\ne A$, and 
let $M$ be a maximal ideal of $A$ such that $\m A \subseteq M$.
Set $A' = A_M$. Then $A'$ is a Krull local domain satisfying
$\m A'\ne A'$. Let $P'\in{\rm Ass}_{A'}(A'/\pi A')$, $P=P'\cap A$
and $\p= P \cap R$. Then $P\in{\rm Ass}_A(A/\pi A)$ and $A'/P'$ is
a localisation of $A/P$, and hence 
${\rm tr.deg}_{R/\p} A'/P' =0$ and $A'/P'$ is Noetherian by
our hypothesis. 
Note that $R/\p$ is a complete one-dimensional local
domain with residue field $k$ satisfying the condition that 
$[\bar{k}:k]<\infty$. Since ${\rm tr.deg}_{R/\p} (A'/P') = 0$
and $\m(A'/P')\ne A'/P'$, it now follows from
Lemma \ref{lem2} that $A'/P'$ is a finite $R/\p$-module. 
Therefore, by Lemma \ref{lemma_add}, we know that
$A'/\pi A'$ is a finite $R$-module, which means $A'/\pi A'$ is
a finite $R/\pi R$-module because $\pi R\subseteq \pi A'\cap R$.
Note that $\cap_{n \ge 0}\pi^nA'= (0)$ because
$A'$ is a Krull domain. Note also that $R$ is complete with 
respect to  $\pi$-adic topology because $\pi\in\m$.
It thus follows from \cite[Theorem 8.4]{M} that $A'$ is 
a finite $R$-module, which contradicts the condition that
${\rm tr.deg}_R A'>0$. Therefore $\m A=A$.

\smallskip

(iii) $\Rightarrow$ (i).
To prove that $A$ is Noetherian, it suffices to prove, by the Mori-Nishimura Theorem
(\cite[Theorem 12.7]{M}) that $A/N$ is Noetherian for every prime ideal $N$ in $A$ of height one.

Fix a prime ideal $N$ in $A$ of height one.
If $\pi\in N$, then $N$ is a minimal prime ideal of $\pi A$ and hence, $A/N$ is a field by Corollary \ref{fieldcor}; 
in particular, $A/N$ is Noetherian. 

Now we consider the case $\pi\notin N$. 
Since $A$ is a Krull domain and $P$
is a maximal ideal for every $P\in{\rm Ass}_A(A/\pi A)$ by Corollary \ref{fieldcor}, it then follows
that $N$ and $\pi A$ are comaximal ideals.
Therefore, $A/N=A[\pi^{-1}]/N[\pi^{-1}]$,
so that $A/N$ is Noetherian, since $A[\pi^{-1}]$ is Noetherian.

Thus, $A$ is Noetherian.
\end{proof}

\begin{rem}
{\em 
(1) The ring $A$ in Example \ref{ex1} shows the necessity of 
the hypothesis that
``${\rm tr.deg}_{R/\p R} A/P>0$ for each $P \in {\rm Ass}_A(A/\pi A)$'' in Theorem \ref{Main}. 

\smallskip

(2) The ring $D$ in Example \ref{non-noeth} shows 
the necessity of the hypothesis ``$R$ is complete''
in Proposition \ref{lNoeth}.

\smallskip

(3) The hypothesis $[\bar{k}:k]<\infty$, occurring in most of the results in this section, is equivalent to the condition 
$[\bar{k}:k]\le 2$; this condition is satisfied if and only if $k$ is either
an algebraically closed field or a real closed field (cf. \cite[Remark 4.3(2)]{DO2})
}
\end{rem}

\section{Examples}

In this section we shall give an example (Example \ref{ex1}) of a Noetherian normal
non-finitely generated subalgebra of the polynomial ring $R[X]$ over a two-dimensional 
complete regular local ring $R$. 

We shall first give methods (Lemmas \ref{l1}--\ref{l3}) for constructing 
Noetherian normal $R$-subalgebras of $R[X]$, when 
$R$ is a Noetherian normal domain with field of fractions $K$.
Lemma \ref{l1} considers a Krull subring $D$ of $K[X]$ with certain properties,
Lemma \ref{l2} examines the ring $A:= D\cap R[X]$ and 
Lemma \ref{l3} gives a sufficient criterion for $D$ and $A$ to be Noetherian.
These results are generalisations of Lemma 5.4 in \cite{DO2}. 

Throughout this section we denote by $X$ an indeterminate over $R$.

\begin{lem}\label{l1}
Let $R$ be a Noetherian normal domain, and 
let $\pi$ be a nonzero prime element of $R$. Let 
$D$ be an integral domain containing $R$ such that 
\begin{enumerate}
\item[\rm(I)] $D[1/\pi] = R[1/\pi][X]$;
\item[\rm(II)] $\pi D$ is a prime ideal and $\pi D \cap R =\pi R$;
\item[\rm(III)] $D_{(\pi D)}$ is a discrete valuation ring.
\end{enumerate}
Then the following statements hold.
\begin{enumerate}
\item[\rm(1)] $D$ is a Krull domain.
\item[\rm(2)] If $R$ a UFD then $D$ is a UFD.
\item[\rm(3)] If $p$ is a prime element of $R$, then $p$ remains a prime element in $D$ 
and either $pR=\pi R$ or $pD_{(\pi D)}=D_{(\pi D)}$.
\end{enumerate}
\end{lem}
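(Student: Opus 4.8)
The plan rests on the decomposition supplied by Lemma \ref{easy}. Since hypothesis (II) makes $\pi$ a prime element of $D$, Lemma \ref{easy} gives
\[
D = D[\pi^{-1}] \cap D_{(\pi D)}.
\]
By hypothesis (I), $D[\pi^{-1}] = R[\pi^{-1}][X]$; as $R$ is Noetherian normal, so is its localisation $R[\pi^{-1}]$, and hence so is the polynomial ring $R[\pi^{-1}][X]$. Thus $D[\pi^{-1}]$ is a Krull domain, while $D_{(\pi D)}$ is a DVR by (III), and both sit inside $\Frac(D) = K(X)$ with $K = \Frac(R)$. These two observations drive all three parts.

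For (1), I would write the Krull representation $D[\pi^{-1}] = \bigcap_v A_v$, the intersection over its essential (height-one) valuations, each a DVR and with finite character. Intersecting with the single DVR $D_{(\pi D)}$ adjoins exactly one further valuation, so $D = D_{(\pi D)} \cap \bigcap_v A_v$ is an intersection of DVRs of finite character (any nonzero element of $D$ is a non-unit in only finitely many of the $A_v$, plus possibly in $D_{(\pi D)}$); this is the definition of a Krull domain. For (2), once (1) is in hand I would invoke Nagata's theorem on divisor class groups for the multiplicative set $\{\pi^n\}$: the kernel of the natural surjection $\operatorname{Cl}(D) \to \operatorname{Cl}(D[\pi^{-1}])$ is generated by the classes of the height-one primes of $D$ containing $\pi$. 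By (II) and (III) the only such prime is $\pi D$ itself (it is prime of height one), and it is principal, so the kernel is trivial. Since $R$ a UFD forces $D[\pi^{-1}] = R[\pi^{-1}][X]$ to be a UFD, we get $\operatorname{Cl}(D) \hookrightarrow \operatorname{Cl}(D[\pi^{-1}]) = 0$, and a Krull domain with trivial class group is a UFD.

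For (3), I would split on whether $p$ is associate to $\pi$. If $pR = \pi R$ then $pD = \pi D$ is prime by (II), giving the first alternative. If $pR \neq \pi R$, then $p \notin \pi R = \pi D \cap R$ by (II), so $p \notin \pi D$; that is, $v_\pi(p) = 0$ in the DVR $D_{(\pi D)}$, which is exactly the second alternative $pD_{(\pi D)} = D_{(\pi D)}$. It then remains to see that $p$ stays prime in $D$. First $p$ is prime in $D[\pi^{-1}] = R[\pi^{-1}][X]$: since $p \nmid \pi$ the element $p$ is prime in the normal domain $R[\pi^{-1}]$, hence in the polynomial ring over it. The crux is the identity
\[
pD = pD[\pi^{-1}] \cap D,
\]
whose inclusion $\subseteq$ is clear; for $\supseteq$ one takes $x \in D$ with $x/p \in D[\pi^{-1}]$ and uses $v_\pi(x/p) = v_\pi(x) \ge 0$ to conclude $x/p \in D_{(\pi D)}$, whence $x/p \in D[\pi^{-1}] \cap D_{(\pi D)} = D$. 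This identity produces an injection $D/pD \hookrightarrow D[\pi^{-1}]/pD[\pi^{-1}]$ into a domain, so $pD$ is prime.

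The main obstacle I anticipate is precisely this last descent of primality in (3): proving $pD = pD[\pi^{-1}] \cap D$ cleanly, which is where (II) and (III) combine, the contraction $\pi D \cap R = \pi R$ forcing $v_\pi(p) = 0$ and the resulting valuation bound letting one ``divide back'' into $D$ via the decomposition from Lemma \ref{easy}. By comparison, the finite-character bookkeeping in (1) and the identification of the height-one primes over $\pi$ in (2) are routine.
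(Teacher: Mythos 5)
Your proposal is correct and follows essentially the same route as the paper: the decomposition $D=D[\pi^{-1}]\cap D_{(\pi D)}$ from Lemma \ref{easy}, Krullness as an intersection of Krull domains, Nagata's criterion (in its class-group form) for (2), and the key identity $pD=pD[\pi^{-1}]\cap D$ for (3). The extra details you supply (finite character of the valuation family, the verification that $v_\pi(p)=0$ lets one divide back into $D$) are exactly the steps the paper leaves implicit.
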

\begin{proof}
(1) By Lemma \ref{easy}, $D = D[1/\pi] \cap D_{(\pi D)}$.
Hence $D$ is Krull, because both $D[1/\pi]$ and $D_{(\pi D)}$ are
Noetherian normal domains by (I) and (III).

\smallskip

(2) Now suppose that $R$ is a UFD. Then $D[1/\pi]$ is a UFD by (I).
Since $D$ is a Krull domain, $\pi$ is a prime element of $D$
and $D[1/\pi]$ is a UFD, it follows that $D$ is a UFD by Nagata's criterion (\cite[Corollary 7.3]{F}). 

\smallskip

(3) Let $p$ be a nonzero prime element of $R$. 
If $pR= \pi R$, then $p D=\pi D$, so that
$p$ is prime in $D$ by (II).

So we consider the case $pR \ne \pi R$. In this case, $p\notin\pi R$, and hence
$p \notin \pi D$ because of (II), which implies that 
$pD_{(\pi D)}=D_{(\pi D)}$. 

Since $D= D[1/\pi] \cap D_{(\pi D)}$, it thus follows that
$pD= pD[1/\pi] \cap D$. Note that $p$ is a prime element of
$R[1/\pi]$ because $pR\ne \pi R$, and hence $p$ is prime in $R[1/\pi][X] = D[1/\pi]$.
Thus $pD[1/\pi]$ is a prime ideal of $D[1/\pi]$, which implies
that $pD$ ($=pD[1/\pi]\cap D$) is a prime ideal of $D$, as
desired. This completes the proof.
\end{proof}

\begin{lem}\label{l2}
Let $R$, $\pi$ and $D$ be as in Lemma \ref{l1} 
{\rm (}with conditions {\rm (I)}, {\rm(II)} and {\rm (III)}{\rm )}. 
Suppose that $R[X]\nsubseteq D$ and $D \nsubseteq R[X]$.
Set $A:= R[X] \cap D$, $P_1:= \pi R[X] \cap A$ and $P_2:= \pi D \cap A$. 
Then the following assertions hold.
\begin{enumerate}
\item [\rm(1)] $A$ is a Krull domain. Further,
$A = R[X] \cap D_{(\pi D)}$ and $A[1/\pi]=
R[1/\pi][X]$ ($=D[1/\pi]$).
\item[\rm (2)] $\pi A = P_1 \cap P_2$, $P_1 \nsubseteq P_2$, 
and $P_2 \nsubseteq P_1$.
\item [\rm(3)] $A_{P_1} = R[X]_{(\pi R[X])}$ and 
$A_{P_2} = D_{(\pi D)}$.
\item [\rm(4)] For each $f \in P_1$, $A [1/f] = D[1/f]$ and for each 
$f \in P_2$, $A[1/f] = R[X][1/f]$.
\item [\rm(5)] For any prime element $p$ in $R$ with $pR\ne \pi R$, 
$pA= pR[X]\cap A$
and hence $p$ remains a prime element in $A$.
\item [\rm (6)] If $(R,\m)$ is  a two-dimensional regular local ring with $\m=(\pi, t)R$, then $A$ is a faithfully flat $R$-algebra. 
\end{enumerate}
\end{lem}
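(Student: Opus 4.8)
The plan is to work throughout inside the common fraction field $K(X)$ of $R[X]$ and $D$, where $K=\Frac(R)$, exploiting that $A$, $R[X]$ and $D$ all coincide after inverting $\pi$. For (1): since $\pi$ is a nonzerodivisor, localization commutes with the finite intersection defining $A$, so $A[1/\pi]=R[X][1/\pi]\cap D[1/\pi]=R[1/\pi][X]$ by (I). Using $D=D[1/\pi]\cap D_{(\pi D)}$ (Lemma \ref{easy}, applied to the prime element $\pi$ of $D$) together with $R[X]\subseteq R[1/\pi][X]=D[1/\pi]$, I rewrite $A=R[X]\cap D[1/\pi]\cap D_{(\pi D)}=R[X]\cap D_{(\pi D)}$. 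Since $R[X]$ is Krull (a polynomial ring over the Noetherian normal, hence Krull, ring $R$) and $D_{(\pi D)}$ is a DVR, both with fraction field $K(X)$, the intersection $A$ is Krull.

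For (2): the ideals $P_1,P_2$ are prime, being contractions of the primes $\pi R[X]$ and $\pi D$ along $A\hookrightarrow R[X]$ and $A\hookrightarrow D$. The equality $\pi A=P_1\cap P_2$ is elementary: given $x\in\pi R[X]\cap\pi D\cap A$, writing $x=\pi g=\pi h$ with $g\in R[X]$, $h\in D$ and cancelling $\pi$ in $K[X]$ yields $g=h\in R[X]\cap D=A$, so $x\in\pi A$; the reverse inclusion is clear. \textbf{The main difficulty is the incomparability} of $P_1$ and $P_2$, and this is exactly where the hypotheses $R[X]\nsubseteq D$ and $D\nsubseteq R[X]$ are used. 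Assuming $P_1\subseteq P_2$ gives $\pi A=P_1=\pi R[X]\cap A$, which amounts to the implication ``$g\in R[X]$ and $\pi g\in D\Rightarrow g\in D$''. I then run a descent on the least $N\ge0$ with $\pi^N g\in D$ (such $N$ exists as $g\in R[1/\pi][X]=D[1/\pi]$): if $N\ge1$, applying the implication to $\pi^{N-1}g\in R[X]$ gives $\pi^{N-1}g\in D$, contradicting minimality, so $N=0$ and $g\in D$; as $g\in R[X]$ was arbitrary, $R[X]\subseteq D$, a contradiction. The symmetric assumption $P_2\subseteq P_1$ gives $\pi A=\pi D\cap A$ and the implication ``$d\in D$ and $\pi d\in R[X]\Rightarrow d\in R[X]$'', and the same descent forces $D\subseteq R[X]$, again a contradiction.

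Parts (3)--(5) then follow more routinely. For (3), incomparability makes $P_1,P_2$ minimal over the principal ideal $\pi A$ (a prime strictly between $\pi A$ and $P_1$ would contain $P_1\cap P_2$, hence $P_1$ or $P_2$, both impossible), so $\hgt P_1=\hgt P_2=1$ in the Krull domain $A$ and $A_{P_1},A_{P_2}$ are DVRs; since $A\subseteq R[X]$ sends $A\setminus P_1$ into the units of $W:=R[X]_{(\pi R[X])}$, we get $A_{P_1}\subseteq W\subsetneq K(X)$, and as a DVR has no proper overring in its fraction field other than the field, $A_{P_1}=W$; symmetrically $A_{P_2}=D_{(\pi D)}$. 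For (4), given $f\in P_1$ write $f=\pi g$ with $g\in R[X]$; for $d\in D$ pick $n$ with $\pi^n d\in R[X]$ (possible since $d\in D[1/\pi]=R[1/\pi][X]$), so $f^n d=g^n(\pi^n d)\in R[X]\cap D=A$ and $d\in A[1/f]$, whence $A[1/f]=D[1/f]$; the case $f\in P_2$ is symmetric. For (5), Lemma \ref{l1}(3) gives $p\notin\pi D$, so $p$ is a unit in $D_{(\pi D)}$; if $a=pg\in pR[X]\cap A$ with $g\in R[X]$, then $g=a/p\in D_{(\pi D)}$ while $g\in R[X]\subseteq D[1/\pi]$, so $g\in D[1/\pi]\cap D_{(\pi D)}=D$, giving $g\in A$ and $a\in pA$; thus $pA=pR[X]\cap A$, which is prime as a contraction of $pR[X]$.

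Finally, for (6), $A$ is a domain containing $R$, hence $R$-torsion-free, and $\{\pi,t\}$ is a regular system of parameters of $R$. By Lemma \ref{cflat} it suffices to show $t$ is $(A/\pi A)$-regular, and since $A/\pi A\hookrightarrow A/P_1\times A/P_2$ by (2) it is enough that $t\notin P_1$ and $t\notin P_2$. As $t\notin\pi R$ (it is part of a minimal generating set of $\m$), we have $t\notin\pi R[X]$, and using $\pi D\cap R=\pi R$ from (II) also $t\notin\pi D$; hence $t\notin P_1\cup P_2$ and $A$ is $R$-flat. It is faithfully flat because $\m A\subseteq\m R[X]\neq R[X]$ forces $\m A\neq A$, i.e. $A\otimes_R(R/\m)\neq0$.
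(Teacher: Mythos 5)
Your proof is correct and follows essentially the same route as the paper's: the same reductions $A[1/\pi]=R[1/\pi][X]$, $D=D[1/\pi]\cap D_{(\pi D)}$, $A=R[X]\cap D_{(\pi D)}$, the same decomposition $\pi A=P_1\cap P_2$ with incomparability extracted from the hypotheses $R[X]\nsubseteq D$ and $D\nsubseteq R[X]$, the same DVR-overring argument for (3), and the same appeal to Lemma \ref{cflat} for (6). The only differences are cosmetic: in (2) you inline the descent that proves Lemma \ref{easy} rather than citing it, in (4) you element-chase instead of reapplying Lemma \ref{easy}, and in (6) you feed Lemma \ref{cflat} the pair $(\pi,t)$ and check $t$-regularity on $A/\pi A$ via (2), where the paper uses $(t,\pi)$ and the primality of $t$ in $A$ from (5).
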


\begin{proof}
(1) Since $D$ is a Krull domain by Lemma \ref{l1}(1) and $R$ is a Noetherian
normal domain, $A$ ($=R[X]\cap D$) is a Krull domain.

Since $D[1/\pi] = R[1/\pi][X]$,
we have $A[1/\pi]=R[1/\pi][X]\cap D[1/\pi]=R[1/\pi][X]$. Moreover, since
$D = D[1/\pi] \cap D_{(\pi D)}$ by Lemma \ref{easy} and 
$R[X]\subseteq D[1/\pi]$, we have
$$
A= R[X]\cap D= R[X] \cap (D[1/\pi] \cap D_{(\pi D)})=R[X]\cap D_{(\pi D)}.
$$

(2) Since $\pi A = \pi R[X] \cap \pi D$, we have 
$\pi A = P_1 \cap P_2$. 

We shall show that $P_1 \nsubseteq P_2$ by contradiction. Suppose, if possible,  that 
$P_1 \subseteq P_2$. Then $\pi A = P_1= \pi R[X] \cap A$, while
$A[1/\pi] = R[X][1/\pi]$ by (1). Since $A\subseteq R[X]$, it thus follows
from Lemma \ref{easy} that $A=R[X]$. This implies that
$R[X] \subseteq D$, which contradicts our hypothesis.

Similarly we have $P_2 \nsubseteq P_1$. 

\smallskip

(3) Since $A$ is a Krull domain by (1) and $\pi A = P_1 \cap P_2$ is the 
irredundant prime decomposition of $\pi A$ in $A$ by (2), it follows that
$A_{P_i}$ is a DVR for $i=1,2$. Therefore $A_{P_1} = R[X]_{(\pi R[X])}$ and
$A_{P_2} = D_{(\pi D)}$, because $A_{P_1}\subseteq R[X]_{(\pi R[X])}$, 
$A_{P_2}\subseteq D_{(\pi D)}$ and $A[1/\pi]=R[1/\pi][X]=D[1/\pi]$.

\smallskip

(4) Let $f \in P_2$. Clearly $A[1/f] \subseteq R[X][1/f]$ and by (2),  $\pi A[1/f] = P_1A[1/f] = 
\pi R[X][1/f] \cap A[1/f]$. Since $A[1/f][1/\pi] = R[X][1/f, 1/\pi]$ by (1), 
it follows from Lemma \ref{easy} that $A[1/f]=R[X][1/f]$.
Similarly, we have $A[1/f] = D[1/f]$ for each $f \in P_1$. 

\smallskip

(5) Since $pR\ne \pi R$, we have 
$pD_{(\pi D)}=D_{(\pi D)}$ by Lemma \ref{l1}(3). It then follows from (1)
that $pA=pR[X]\cap D_{(\pi D)}=pR[X] \cap A$. Thus $p$ is prime in $A$.

\smallskip

(6) By (5), $t$ is a prime element in $A$ and hence $\{t, \pi\}$ is a regular sequence in $A$. Therefore,
by Lemma \ref{cflat}, $A$ is a flat $R$-algebra.  Since $A \subseteq R[X]$, it follows that $A$ is faithfully flat over $R$.
\end{proof}

\begin{lem}\label{l3}
Let $R$, $D$, $A$, $P_1$ and $P_2$ be as in Lemmas \ref{l1} and \ref{l2}. 
Then the following statements hold.
\begin{enumerate}
 \item [\rm(1)] If $P_1+P_2=A$, then 
$A/\pi A \cong A/P_1 \times  A/P_2$,  $A/P_1 \cong (R/\pi R)[X]$ 
and  $A/P_2 \cong D/\pi D$.
 \item [\rm(2)] If $\pi D$ is a maximal ideal of $D$, 
then $D$ is a Noetherian domain.

\item[\rm (3)] If $D$ is a Noetherian domain, then $A [1/f]$ is 
Noetherian for every $f \in P_1 + P_2 $. In particular, 
if $P_1+P_2 =A$, then $A$ is a Noetherian domain.  

\item [\rm (4)] If $D$ is a finitely generated $R$-algebra, 
then $A [1/f]$ is a
finitely generated $R$-algebra for every $f \in P_1 + P_2 $. In particular, 
if $P_1+P_2 =A$ then $A$ is a finitely generated $R$-algebra.  
\end{enumerate}
\end{lem}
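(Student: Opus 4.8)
The plan is to treat the four parts essentially independently, combining the Chinese Remainder Theorem with the localisation identities of Lemma~\ref{l2} and the two ready-made criteria Lemma~\ref{cohen} and Lemma~\ref{noethl}.

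For part~(1), I would assume $P_1+P_2=A$ and fix $a_1\in P_1$, $a_2\in P_2$ with $a_1+a_2=1$. Since $\pi A=P_1\cap P_2$ by Lemma~\ref{l2}(2) and $P_1,P_2$ are comaximal, the Chinese Remainder Theorem gives $A/\pi A\cong A/P_1\times A/P_2$ at once. The inclusions $A\subseteq R[X]$ and $A\subseteq D$ induce injections $A/P_1\hookrightarrow R[X]/\pi R[X]=(R/\pi R)[X]$ and $A/P_2\hookrightarrow D/\pi D$, with kernels exactly $P_1=A\cap\pi R[X]$ and $P_2=A\cap\pi D$; the only work is to prove these are surjective. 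For the first, given $g\in R[X]$ I would note that $a_2\equiv 1\pmod{\pi R[X]}$ (because $a_1\in\pi R[X]$), write $g=d/\pi^m$ with $d\in D$ (legitimate since $g\in R[X]\subseteq D[1/\pi]$) and $a_2=\pi\delta$ with $\delta\in D$, and observe that $g a_2^{N}=d\delta^{N}\pi^{N-m}\in D$ for $N\ge m$, while trivially $g a_2^{N}\in R[X]$. Hence $g a_2^{N}\in A$ and it maps to $\bar g$, giving surjectivity and $A/P_1\cong(R/\pi R)[X]$. The isomorphism $A/P_2\cong D/\pi D$ is symmetric: use $a_1\equiv 1\pmod{\pi D}$ and clear the $\pi$-denominator of an element of $D\subseteq R[1/\pi][X]$ by multiplying by a high power of $a_1\in\pi R[X]$.

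For part~(2) I would simply verify the three hypotheses of Lemma~\ref{cohen} for $t=\pi\in D$: $D[\pi^{-1}]=R[1/\pi][X]$ is Noetherian by the Hilbert Basis Theorem; $\pi D$ is maximal by assumption; and $\hgt(\pi D)=1$ since $D_{(\pi D)}$ is a DVR by hypothesis~(III) of Lemma~\ref{l1}. Lemma~\ref{cohen} then gives that $D$ is Noetherian.

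For parts~(3) and~(4), fix $f\in P_1+P_2$ and write $f=f_1+f_2$ with $f_1\in P_1$, $f_2\in P_2$; discarding the trivial cases where some $f_i=0$ (then Lemma~\ref{l2}(4) applies to $A[1/f]$ directly), I regard $f_1,f_2$ as nonzero, hence regular, elements of the domain $A[1/f]$, where the relation $f_1+f_2=f$ makes them comaximal. Now $A[1/f][1/f_1]=A[1/(ff_1)]=D[1/(ff_1)]$ because $ff_1\in P_1$, and $A[1/f][1/f_2]=A[1/(ff_2)]=R[X][1/(ff_2)]$ because $ff_2\in P_2$, both by Lemma~\ref{l2}(4). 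When $D$ is Noetherian these two localisations are Noetherian, so $A[1/f]$ is Noetherian by Lemma~\ref{noethl}(1); when $D$ is a finitely generated $R$-algebra they are finitely generated over $R$, so $A[1/f]$ is finitely generated over $R$ by Lemma~\ref{noethl}(2). Taking $f=1$ when $P_1+P_2=A$ yields the two ``in particular'' statements. The one genuinely nonroutine step is the surjectivity in part~(1): the denominator-clearing trick showing that $g a_2^{N}$ (resp. $d a_1^{N}$) enters $A=R[X]\cap D$ for large $N$ while remaining congruent mod $\pi$ is the crux, whereas everything else is a direct application of the cited lemmas.
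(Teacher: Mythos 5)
Your proposal is correct and follows essentially the same route as the paper: CRT plus the identifications $P_1=A\cap\pi R[X]$, $P_2=A\cap\pi D$ for (1), Lemma \ref{cohen} for (2), and the comaximality criterion Lemma \ref{noethl} applied to $f_1,f_2$ in $A[1/f]$ via Lemma \ref{l2}(4) for (3) and (4). The only cosmetic difference is that in (1) you re-derive the needed containments ($ga_2^N\in A$, $da_1^N\in A$) by clearing $\pi$-denominators by hand, whereas the paper obtains the same conclusion by invoking Lemma \ref{l2}(4) directly.
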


\begin{proof}
(1) Assume $P_1+P_2 =A$. Then,
since $\pi A= P_1 \cap P_2$ by Lemma \ref{l2}(2), we have
$A/\pi A \cong A/P_1 \times  A/P_2$.

We now show that $A/P_1\cong R[X]/\pi R[X]$, 
namely, $R[X]=A+\pi R[X]$. Let $f\in R[X]$.
Since $P_1 +P_2 =A$, there exist $a \in P_1$ and $b \in P_2$  
such that $a+b =1$. Then $A[b^{-1}] = R[X][b^{-1}]$
by Lemma \ref{l2}(4), so that
$b^nf=(1-a)^nf \in A$ for some $n>0$. Since $a\in P_1\subseteq \pi R[X]$,
from this it follows that $f\in A+\pi R[X]$, as desired. 

Similarly we have $A/P_2 \cong D/\pi D$.

\smallskip

(2) Since $D[1/\pi]$ ($= R[1/\pi][X]$) is Noetherian and $\hgt (\pi D)=1$
(as $D_{(\pi D)}$ is a DVR), the assertion follows from Lemma \ref{cohen}.

\smallskip

(3) Let $f\in P_1+P_2$, and write $f=a+b$ with $a\in P_1$ and
$b\in P_2$. Then $A[1/a]=D[1/a]$ and $A[1/b]=R[X][1/b]$ by
Lemma \ref{l2}(4), so that both $A[1/a]$ and $A[1/b]$ are
Noetherian. Since $(a,b)A[1/f]=A[1/f]$, it then follows from
Lemma \ref{noethl}(1) that $A[1/f]$ is Noetherian.

\smallskip

(4) The proof is similar to the above proof of (3).
\end{proof}

We now present our main example over the 
complete regular local domain 
$R=\bC[[u,v]]$, where $u$ and $v$ are indeterminates over $\bC$:
we construct a Noetherian normal $R$-subalgebra of $R[X]$ which is not 
finitely generated over $R$. This example shows that Theorem \ref{thmB} does not 
extend to complete local rings of dimension two and that the hypothesis on the transcendence degree
of certain fibres is necessary in Theorem \ref{Main}.

\begin{ex}\label{ex1}
Let $R=\bC[[u,v]]$, where $u$ and $v$ are indeterminates over $\bC$, and
let $p_n$ denote the $n$-th prime number for $n>0$.
We set 
$$
L := \bC((v))[v^{1/p_1}, v^{1/p_2}, \dots, 
v^{1/p_{n}}, \dots],
$$
so that $L$ is an infinite algebraic extension of the field $\bC((v))$.

Let $x_0=uX$, $x_1=(vx_0-1)/u$, and
\begin{eqnarray*}
x_{n} &=& \frac{{x_{n-1}}^{p_{n-1}}-v}{u}\\
\end{eqnarray*}
for $n>1$. 
Let $D=R[x_0,\, x_1, \dots, x_n,\,\dots]$ and $A= R[X]\cap D$. 
Then the following hold.
 \begin{enumerate}
\item [\rm(1)] $D[1/u]= R[1/u][X]$.
\item [\rm(2)] $D/uD\cong L$; in particular, $D/uD$ is not finitely generated over $R/uR$.
\item [\rm(3)] $uD$ is a maximal ideal of $D$, $uD \cap R= uR$ and $\hgt (uD)=1$.
\item [\rm(4)] $D$ is a Noetherian UFD which is not finitely 
generated over $R$.
\item [\rm(5)]$A$ is a Noetherian normal domain which is faithfully flat but not 
finitely generated over $R$.
\end{enumerate}
\end{ex}

\begin{proof}
(1) Since $x_0= uX$ and $x_i \in R[1/u][X]$ for each $i > 0$,
we have $D[1/u]= R[1/u][X]$.

\smallskip

(2) Set 
$$
f_1:=uX_1-vX_0+1 {\rm ~~and~~} f_n:=uX_n-{X_{n-1}}^{p_{n-1}}+v {\rm ~~for~} n\ge 2,
$$ 
where $X_0,X_1,\dots,X_n,\dots$ are indeterminates over $R$. Let 
$$
I_n=(f_1,f_2,\dots,f_n)R[X_0,X_1,\dots,X_n] {\rm ~~for~} n\ge 1
$$
and let $I_0$ denote the zero ideal of $R[X_0]$. Set
$$
C_0:=R[X_0] {\rm ~~and~~} C_n:= R[X_0, X_1, \cdots, X_n]/I_n {\rm ~~for~} n\ge 1.
$$
We shall prove, by induction on $n$, that for each $n\ge 1$, $I_n \cap R[X_0,X_1,\dots,X_{n-1}] =I_{n-1}$ 
and $I_n$ is a prime ideal of $R[X_0, X_1, \ldots , X_n]$ with $I_n \cap R=(0)$. 
This would establish that each $C_n$ is an integral domain and that we may identify   
$C_{n-1}$ with its canonical image in $C_n$, i.e.,  we may assume that, for every $n \ge 1$,
\[
R\subseteq C_0\subseteq\cdots\subseteq C_{n-1}\subseteq C_n.
\]
Note that $f_1$ is an irreducible polynomial in $R[X_0,X_1]$ which is a UFD so that 
$I_1$ is a prime ideal of $R[X_0, X_1]$. Further $I_1 \cap C_0=(0)$ and $I_1 \cap R= (0)$. 
Thus the assertion holds for $n=1$.    

Now suppose that the assertion holds for $n$. Let
$J=I_{n+1}\cap R[X_0,\dots,X_n]$. We first show that $J=I_n$. Clearly, $I_n \subseteq J$.
 Note that
\[
R[u^{-1}][X_0,X_1,\dots,X_m]=R[u^{-1}][X_0,f_1,\dots,f_m]
\]
for every $m$, which implies that 
$J[u^{-1}]=I_n[u^{-1}]$.  Now let $h\in J$. Then
$u^rh\in I_n$ for some $r>0$. By induction hypothesis,  $I_n$ is a prime ideal of 
$R[X_0, X_1, \cdots, X_n]$  with $I_n\cap R=(0)$.
In particular, $u\notin I_n$ as $u \in R$, so that $h\in I_n$. Thus we have
$J=I_n$, as claimed. Therefore, we may assume that $C_n\subseteq C_{n+1}$.
Note that, $I_{n+1} \cap R= I_{n+1} \cap R[X_0,\dots,X_n] \cap R=I_n \cap R=(0)$. 

We next show that $I_{n+1}$ is a prime ideal of $R[X_0, X_1, \cdots, X_{n+1}]$, i.e., 
$C_{n+1}$ is an integral domain.
Let $z_n$ denote the image of $X_n$ in $C_n$, set $w_n:={z_{n}}^{p_{n}}-v \in C_n$ and  
$$
g_{n+1}:=uX_{n+1}-w_n \in C_n[X_{n+1}], 
$$ 
i.e., $g_{n+1}$ is the image of $f_{n+1}$ in $C_n[X_{n+1}]$.
Since $C_n\subseteq C_{n+1}$, it then follows that 
$$
C_{n+1}=C_n[z_{n+1}]\cong C_n[X_{n+1}]/(g_{n+1}) {\rm~~and~~} C_{n+1}[1/u] \cong C_n[1/u].
$$ 
Also setting $L_{n}:=\bC((v))[v^{1/2}, v^{1/3}, \dots, v^{1/p_{n}}]$, an algebraic extension of $\bC((v))$, 
we  have
$$C_n/uC_n\cong\bC[[v]][X_0,X_1,\dots,X_n]/(vX_0-1,{X_1}^2-v,\dots,
{X_{n-1}}^{p_{n-1}}-v)\cong L_{n-1}[X_n], $$
and hence $C_n/uC_n$ is an integral domain.
This implies that 
$u, w_n$ is a regular sequence in $C_n$ and hence a regular sequence in $C_n[X_{n+1}]$.
Thus, $u$ is a regular element of $C_{n+1}$, so that the canonical map 
$C_{n+1} \to C_{n+1}[1/u]$ is injective.  But $C_{n+1}[1/u]$ ($\cong C_n[1/u]$) is an integral domain.   
Therefore,
$C_{n+1}$ is an integral domain,  as desired.   

Since $w_n$ is the image of ${X_n}^{p_n}-v$ in $C_n$, we have
$$C_n/(u,w_n)C_n\cong\bC[[v]][X_0,X_1,\dots,X_n]/(vX_0-1,{X_1}^2-v,\dots,
{X_{n-1}}^{p_{n-1}}-v, {X_n}^{p_n}-v) \cong L_{n}.$$ 
Thus, we have a canonical isomorphism 
\begin{equation}\label{cl}
{\psi}_n \colon C_n/(u,w_n)C_n \stackrel{\simeq}{\to} L_n.
\end{equation}
and hence a canonical surjection 
$\theta_n: C_n \to L_n$.

Now, let $C= \bigcup_{n \ge 0}C_n$. As $C_0=R^{[1]}$ and $C_{n+1}$ is algebraic over $C_n$ $\forall$ $n \ge 0$,
we have ${\rm tr.deg}_R \,C =1$.
For each $n \ge 0$, let 
$$
\Phi_n\colon R[X_0,X_1,\dots,X_n]\to D
$$ 
be the $R$-algebra map defined by 
$$\Phi_n(X_i)=x_i
 {\rm ~~for~~} 0\le i\le n.$$ Then, for each $j$, $1 \le j \le n$,  
$\Phi_n(f_j)=ux_j-{x_{j-1}}^{p_{j-1}}+v=0$, so that $\Phi_n$ induces an $R$-algebra map
$$
{\phi}_n\colon C_n\to D
$$ 
such that 
$$
{\phi}_n(z_i)=x_i {\rm ~~ for ~~} 0 \le i \le n.
$$
In particular, for any $m \ge 0$,
we have  
\begin{equation}\label{m}
{\phi}_m(z_m)=x_m.
\end{equation} 
Since ${\rm tr.deg}_R \,C_n =1 = {\rm tr.deg}_R \, D$, and
since both $C_n$ and $D$ are integral domains, it then follows that
${\phi_n}$ is an injective $R$-algebra homomorphism. We thus have an injective $R$-algebra map 
$$
\phi: C \to D {\rm ~~such~~that~~} \phi|_{C_n}={\phi}_n,
$$
which is also surjective as, for each $m \ge 0$, $\phi (z_m)={\phi}_m(z_m)=x_m$ by (\ref{m}). Thus, $C \cong D$.

Note that 
$$ 
{\phi}_n (w_n)= {x_n}^{p_n}-v=ux_{n+1} \in uD,
$$
so that ${\phi}_n (u,w_n)C_n \subseteq uD$. Hence, as $L_n$ is a field, by the isomorphism ${\psi}_n$ in (\ref{cl}), ${\phi}_n$ induces 
an injective map 
$$
\overline{{\phi}}_n\colon L_n\to \dfrac{D}{uD}.
$$ 
Note that, for any $m \ge 0$, 
\begin{equation}\label{surjection} 
\overline{{\phi}_m} (\theta_m({z_m}))=\overline{x_m}, 
\end{equation}
where $\overline{x_m}$ denotes the image of $x_m$ in $D/uD$.
Now, the $\overline{\phi_n}$'s give rise to an injective $R$-algebra map 
$$
\overline{{\phi}}\colon L\to \dfrac{D}{uD},
$$
which is also surjective as, given any $m \ge 0$, 
$\overline{{\phi}} (\theta_m({z_m}))=\overline{x_m}$ by (\ref{surjection}).  
Thus $\overline{\phi}$ is an isomorphism, i.e., $D/uD \cong L$.

As $L$ ($\cong D/uD$) is algebraic over $\bC[[v]]$ ($\cong R/uR$) and hence over $\bC((v))$, and as
$L$ is not a finite extension of $\bC((v))$, it follows that $D/uD$ is not a finitely generated  algebra
over $R/uR$.

\smallskip

(3) $uD$ is a maximal ideal by (2). 

Since $\bC[[v]] \subseteq L$, the canonical map from $R/uR$ ($=\bC[[v]]$) to 
$D/uD$ ($=L$ by (2)) is injective. Hence $uD \cap R =uR$. 

We show that $\hgt (uD)=1$. 
Since $R$ is a Noetherian domain and $uD \cap R =uR$, it follows from Theorem \ref{C} that 
$$
\hgt (uD) + {\rm tr.deg}_{R/uR}\, (D/uD)  \leq 
\hgt (uR) + {\rm tr.deg}_R \, D = 2 ,
$$
so that $\hgt (uD) \le 2.$ Now ${\rm tr.deg}_{R/uR}\, (D/uD)=0$ by (2).
Therefore, if $\hgt (uD) =2$, then $uD$ would satisfy the 
dimension formula relative to $R$ and hence,  by \cite[Theorem 3.6]{O}, it would follow that 
$D/uD$ is a subalgebra of a finitely generated $(R/uR)$-algebra. But then, as $D/uD$ is a field by (2),
it would follow from Proposition \ref{giral} that $D/uD$ itself is a finitely generated $R/uR$-algebra,
which is not the case by (2).
Hence, $\hgt (uD) = 1$. 

\smallskip

(4) From (1) and (3), it follows, by Lemma \ref{cohen}, that $D$ is Noetherian.
$D$ is a UFD by Lemma \ref{l1}(2).
Since $D/uD$ is not finitely generated over $R/uR$ by (2), 
it follows that $D$ is not finitely generated over $R$.

\smallskip

(5) $A$ is faithfully flat over $R$ by Lemma \ref{l2}(6).

$A$ is a Krull domain by Lemma \ref{l2}(1), so that $A$ is normal. 

We show that $A$ is Noetherian.
Let $P_1 = u R[X] \cap A$ and $P_2 = uD\cap A$ (as in Lemma \ref{l2}).
Since $x_0 = uX \in uR[X] \cap A = P_1$ and $vx_0-1 (=ux_1) \in uD \cap A = P_2$, 
we have $P_1 + P_2 =A$. Thus $A$ is Noetherian by Lemma \ref{l3}(3).

Since $D/uD$ is not finitely generated over $R/uR$ by (2) and
$A/P_2 \cong D/uD$ by Lemma \ref{l3}(1), it follows that 
$A/P_2$ is not finitely generated over $R/uR$ and hence 
$A$ is not finitely generated over $R$.
\end{proof}

The following example shows that the condition in Lemma \ref{l3} that 
$D/\pi D$
is a field  cannot be replaced by the condition that $D/\pi D$ 
is Noetherian in order to conclude that $D$ is Noetherian.  
It also illustrates the necessity of the hypothesis that $R$ is complete
in Proposition \ref{lNoeth}.

\begin{ex}\label{non-noeth}
Let $R=k[u,v]_{(u,v)}$, where $k=\bar{\bQ}$ is 
the algebraic closure of $\bQ$ 
and $u, v$ are indeterminates over $k$. Set
$a_0:=1$ and $a_n:=v^n/n!$ for $n >0$. 
Set $x_0 := uX$, where $X$ is an indeterminate over $R$ and 
\begin{eqnarray*}
x_n: &=& \frac{x_{n-1}-a_{n-1}}{u}=
\frac{x_0-a_0-a_1u-\cdots-a_{n-1}u^{n-1}}{u^{n}}
\end{eqnarray*}
for $n \ge 1$. 
Let $D=R[x_0,x_1, \dots, x_n, \dots]$. 
Then the following statements hold.
\begin{enumerate}
\item [\rm(1)] $D[1/u]=R[1/u][X]$.
\item [\rm(2)] $u$ is a prime element in $D$ and $uD \cap R=uR$.
\item [\rm(3)] $D/uD=R/uR$. In particular, $D/uD$ is Noetherian and 
$D/(u,v)D$ is a domain.
\item [\rm(4)] $D_{(uD)}$ is a discrete valuation ring.
\item [\rm(5)] $D$ is a non-Noetherian UFD.
\end{enumerate}
\end{ex}
\begin{proof}
We first note that $D=\bigcup_{n\ge0}R[x_n]$ and for each $n \ge 0$,
$x_n=ux_{n+1} + a_n$ for some $a_n \in R$. The assertions (1)--(3)
follow from this.

\smallskip

(4) Set $\widetilde{R}:= k[v][[u]]_{(u,v)}$. Then  $\widetilde{R}$ is a Noetherian ring
being the $u$-adic completion of $R$. We show that
$D$ is $R$-isomorphic to a subring $D'$ of $\widetilde{R}$.
We set
\[
y:=a_0+a_1u+a_2u^2+\cdots+a_nu^n+\cdots,
\]
so that $y$ is an element of $k[v][[u]]_{(u,v)} (= \widetilde{R})$. 
Since $a_0=1$ and $a_n=v^n/n!$ for each $n >0$, it follows that
\[
y=1+\frac{vu}{1}+\frac{{(vu)}^2}{2!}+\cdots+\frac{{(vu)}^n}{n!}+\cdots.
\] 
Thus $y$ is a transcendental element over $R$. We now define 
a subring $D'$ of $\widetilde{R}$ by 
\[
D':=R[y,\frac{y-a_0}{u}, \frac{y-a_0-a_1u}{u^2}, \dots, 
\frac{y-a_0-a_1u-\cdots-a_{n-1}u^{n-1}}{u^{n}}, \dots]. 
\]
Let $\phi : D \to D'$ be the $R$-linear map defined 
by $\phi(x_0)= y$. Since $y$ is transcendental over $R$, it follows 
that $\phi$ is an isomorphism. 

Note that we have $\bigcap_{n\ge 0}u^nD'\subseteq 
\bigcap_{n\ge 0}u^n\widetilde{R} =(0)$. Since $\phi$ is an isomorphism, from this
it follows that $\bigcap_{n\ge 0}u^nD=0$, which implies that $\hgt (uD)=1$. 
As $D$ is a Krull domain by Lemma \ref{l1}(1), it follows that $D_{(uD)}$ is a DVR.

\smallskip

(5) $D$ is a UFD by Lemma \ref{l1}(2).
Note that $v\in R$ remains a prime element in $D$ by Lemma \ref{l1}(3).
Since
\[
x_0-1=x_0-a_0=u^nx_n+a_1u+\cdots+a_{n-1}u^{n-1}
\]
and $a_i\in vR\subseteq vD$ for $i>0$, we have
$x_0-1 \in \bigcap_{n \ge 0}u^n(D/vD)$, which shows that
the integral domain $D/vD$ is not Noetherian. Thus $D$ is not
Noetherian. 
\end{proof}

Let $R$ be a Noetherian normal domain and $A$ be a Krull domain such that
$R \subseteq A \subseteq R[X]$. If $\dim R = 1$, then
$A$ is a Noetherian domain by \cite[Lemma 3.3]{DO2}. The following example 
shows that the result does not hold when $\dim R= 2$, not even if 
$R$ is a complete regular local ring.

\begin{ex}\label{ex2}
Let $R=\bC[[u,v]]$, where $u$ and $v$ are indeterminates over $\bC$. 
Let $p_n$ denote the $n^{\rm th}$ prime number and set 
$q_n=\Pi_{i=1}^n p_i$ for $n>0$.
Let $S$ be the infinite integral extension of $\bC[[v]]$ generated 
by all the $v^{1/q_n}$'s, i.e.,
$$S= \bigcup_{n \ge 1}\bC[[v^{1/q_n}]].$$
Set $x_0:= uX$, $x_1:=({x_0}^2-v)/u$ and
\begin{eqnarray*}
x_n &=& \frac{{x_{n-1}}^{p_{n}}-x_{n-2}}{u}
\end{eqnarray*}
for $n\ge2$. Let $D=R[x_0,\, x_1, \dots, x_n,\,\dots]$ and $A=D \cap R[X]$.
Then the following hold.
\begin{enumerate}
\item [\rm(1)] $D[1/u]=R[1/u][X]$. 
\item [\rm(2)] $uD$ is a prime ideal of $D$ and $\hgt (uD)=1$.
\item [\rm(3)] $D/uD\cong S$, and hence $D/uD$ is a non-Noetherian ring.
\item [\rm(4)] $D$ is a non-Noetherian UFD.
\item [\rm(5)] $A$ is a non-Noetherian Krull subalgebra of $R[X]$.
\end{enumerate}
\end{ex}
\begin{proof}
Proof of (1) follows easily.
Following the same argument as in Example \ref{ex1}, one can show that
$D/uD \cong C[[v]][X_0, X_1,X_2, \dots]/({X_0}^2-v,{X_1}^{3}-X_0, {X_{2}}^{5}-X_1, \dots)\cong S$
and $\hgt (uD)=1$.
Note that $S= \bigcup_{n \ge 1}\bC[[v^{1/q_n}]]$ is 
a direct limit of discrete valuation rings $\bC[[v^{1/q_n}]]$
and hence a valuation ring. It is an infinite integral extension over $R/uR = \bC [[v]]$.
Further $S$ has following infinite ascending chain of ideals
$$
(v)\subsetneqq (v^{1/2})\subsetneqq\cdots \subsetneqq (v^{1/q_n})\subsetneqq \cdots ,
$$
showing that $D/uD$ is not Noetherian.
Thus (2) and (3) follow from above.

\smallskip

(4) $D$ is a UFD by Lemma \ref{l1}. Since $D/uD$ is not Noetherian,
$D$ is {\it not Noetherian}.

\smallskip

(5) $A$ is Krull domain by Lemma \ref{l2} (1).
Note that $D$ and $A$ are of the ``types'' as in Lemmas \ref{l1} 
and \ref{l2}. Recall that, $P_1 = u R[X] \cap A$ and $P_2 = uD\cap A$.
Now $x_0 = uX \in uR[X] \cap A = P_1$ and 
${x_0}^2-v \in uD \cap A = P_2$ and so
$v \in P_1 + P_2$. Thus, by Lemma \ref{l3} (1),
$(A/P_2)[1/v] = (D/uD)[1/v]$. 
Therefore, $A/P_2$ and $D/uD$ are birational.
Therefore, since $A/P_2$ is a one dimensional domain
and $D/uD$ is not a Noetherian ring, 
$A/P_2$ is not Noetherian by 
Krull-Akizuki theorem (cf. \cite[Theorem 11.7]{M}). 
Thus, $A$ is not a Noetherian ring. 
\end{proof}

\section{Appendix}

The following result on finite generation of algebras has been proved 
for the case $R$ is a field in \cite[Theorem 1.1]{Ot}.
Below, we show that the result can be extended to an excellent 
domain. The proof is essentially the same as in \cite{Ot}. 

\begin{prop}\label{toya}
Let $R$ be an excellent local domain and $A$ a normal domain containing $R$. 
 Suppose that there exists a nonzero element $f$ in $A$ 
satisfying the following hypotheses. 
\begin{enumerate}
 \item [\rm (I)] $A[f^{-1}]$ is a finitely generated $R$-algebra.
\item [\rm (II)] $A/fA$ is a finitely generated $R$-algebra.
\item [\rm (III)] For every minimal prime ideal $P$ of $f$ in $A$, 
$\hgt P =1$ and $P$ satisfies the dimension formula relative to $R$.
\end{enumerate}
Then $A$ is finitely generated over $R$.
\end{prop}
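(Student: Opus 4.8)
The plan is to reduce the statement to an equality of two Krull domains: I would manufacture a finitely generated normal ``model'' $B$ of $A$ inside $A$, and then prove $B=A$ by matching the height-one primes of $A$ and $B$ lying over $f$.

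\textbf{Construction of the model.} First I would choose finitely many elements of $A$: a set $z_1,\dots,z_r\in A$ of $R$-algebra generators of $A[f^{-1}]$ (cleared of denominators, using (I)), lifts $y_1,\dots,y_s\in A$ of a finite set of $R$-algebra generators of $A/fA$ (using (II)), and $f$ itself. Setting $B_0:=R[f,y_1,\dots,y_s,z_1,\dots,z_r]\subseteq A$, one has $B_0[f^{-1}]=A[f^{-1}]$ and $B_0+fA=A$ by construction. Since $R$ is excellent, so is $B_0$; hence its normalisation $B$ in $\Frac(B_0)=\Frac(A)$ is a finite $B_0$-module, thus a finitely generated \emph{normal} $R$-algebra, and $B\subseteq A$ because $A$ is normal with the same fraction field. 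Passing from $B_0$ to $B$ preserves $B[f^{-1}]=A[f^{-1}]$ and $B+fA=A$. As $B$ is finitely generated over the universally catenary ring $R$, the dimension formula holds between $R$ and $B$ at every prime; moreover $A$ and $B$ are Krull.

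\textbf{Reduction and matching of primes over $f$.} By Lemma \ref{easy} applied with $t=f$, it suffices to prove $fA\cap B=fB$, for this yields $B=A$ and hence finite generation. Write $I:=fA\cap B\supseteq fB$. The relation $B+fA=A$ gives a ring isomorphism $A/fA\cong B/I$, so the minimal primes of $fA$ correspond bijectively to those of $I$ via $P\mapsto Q:=P\cap B$, with $A/P\cong B/Q$. For such $P$, hypothesis (III) gives $\hgt P=1$ and the dimension equality $1+\td_{R/\mathfrak p}A/P=\hgt\mathfrak p+\td_R A$, where $\mathfrak p=P\cap R$; the dimension formula for $B$ gives $\hgt Q+\td_{R/\mathfrak p}B/Q=\hgt\mathfrak p+\td_R B$. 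Since $A/P\cong B/Q$ and $\td_R A=\td_R B$ (the two rings share a fraction field), subtracting forces $\hgt Q=1$. Thus $Q$ is a height-one prime of $B$ containing $f$, and $B_Q\subseteq A_P$ are two rank-one valuation rings of the common fraction field $K$, hence equal: $A_P=B_Q$. So every localisation $A_P$ at a minimal prime of $fA$ occurs among the $B_Q$ at minimal primes of $fB$.

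\textbf{The crux: no extra primes.} What remains is to show that \emph{every} minimal prime $Q$ of $fB$ has the form $P\cap B$ above, equivalently $\sqrt{fB}=\sqrt I$, equivalently $A\subseteq B_Q$ for every minimal prime $Q$ of $fB$; this is the technical heart. If some minimal prime $Q$ of $fB$ satisfied $I\not\subseteq Q$, I would pick $b^*\in I\setminus Q$ and write $b^*=fa^*$ with $a^*\in A$, obtaining $v_Q(a^*)=-v_Q(f)<0$; iterating $A=B+f^nA$ then produces elements of $A$ of arbitrarily negative $v_Q$-value, i.e. the DVR $B_Q$ has no centre on $A$. I would exclude this using the dimension formula: the residue field of $B_Q$ is $\Frac(B/Q)$, whose transcendence degree over $\kappa(\mathfrak p)$ is pinned down by the dimension equality for $B$ at $Q$, and a valuation of $K$ with prescribed residual transcendence degree but no centre on the Krull domain $A$ is incompatible with $A/fA\cong B/I$ being finitely generated over $R$ of the dimension dictated by (III). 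This forces $I\subseteq Q$, so $Q=P\cap B$ for a minimal prime $P$ of $fA$; consequently $fA\cap B=fB$, whence $A=B$ is finitely generated over $R$. The main obstacle is exactly this last step—ruling out minimal primes of $fB$ that ``escape to infinity'' over $A$—and it is precisely here that hypothesis (III) and the excellence of $R$ (giving both the finite normalisation producing $B$ and the dimension formula for $B$) are indispensable.
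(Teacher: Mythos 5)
Your construction of the model $B$ and the matching of the minimal primes of $fA$ with \emph{some} height-one primes of $fB$ over $f$ are fine, but the step you yourself flag as the crux --- ruling out ``extra'' minimal primes $Q$ of $fB$ with $A\not\subseteq B_Q$ --- is a genuine gap, and in fact the intermediate claim $fA\cap B=fB$ (hence $B=A$) is \emph{false} for a model $B$ with exactly the properties you establish. Concretely, take $R=k$ a field, $A=k[s,t]$, $f=s$. Then $A[f^{-1}]$ is generated by $s$ and $t=(s^2t)/s^2$ (a legitimate ``clearing of denominators''), and $t+s^2t^2$ is a legitimate lift of the generator $\bar t$ of $A/sA$. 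So $B_0=k[s,\,s^2t,\,t+s^2t^2]$ is a valid output of your construction; one checks $B_0\cong k[S,V,U]/(V^2+V-S^2U)$ via $V\mapsto s^2t$, $U\mapsto t+s^2t^2$, which is a smooth hypersurface, hence already normal, so $B=B_0$. This $B$ satisfies $B[f^{-1}]=A[f^{-1}]$, $B+fA=A$, is finitely generated and normal, and the dimension formula holds over $R$; yet $fB$ has \emph{two} minimal primes, $Q_1=(S,V)$ (which is $fA\cap B$) and the extra prime $Q_2=(S,1+V)$, and $v_{Q_2}(t)<0$ since $t=U/(1+V)$. All hypotheses (I)--(III) of the proposition hold here and $A$ is of course finitely generated, so the ``incompatibility'' you hope to derive from a divisorial valuation of prescribed residual transcendence degree without centre on $A$ simply does not exist: such valuations coexist peacefully with (II) and (III). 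Hence no argument along the lines you sketch can close the gap, and the global reduction to Lemma \ref{easy} with $t=f$ is the wrong frame.

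The paper's proof avoids this by working \emph{locally}: for each maximal ideal $M$ of $A$ it chooses $C$ (hence $B$) also containing generators of $M$, so that $\m A=M$ for $\m=M\cap B$, and then compares the completions $\widehat{B}\to\widehat{A}$ --- surjective because $\widehat B/\m\widehat B=\widehat A/M\widehat A$ and $\widehat A$ is complete, injective because $\widehat B$ is a normal domain (excellence of $R$) and a dimension count via (III) and the Noetherianness of the $f$-adic completion (from (II)) gives $\dim\widehat B\le\dim\widehat A$. This yields $A_M=B_\m$, i.e.\ $A_M$ is a locality over $R$, and Theorem \ref{onoda}(1) finishes. In the counterexample above this is exactly what happens: at $M=(s,t)$ one has $1+s^2t\notin\m$, so $t=U/(1+V)\in B_\m$ and $A_M=B_\m$, even though $A\ne B$ globally --- the extra component $Q_2$ is invisible after localising at $\m$. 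Your write-up is missing precisely this localisation-and-completion mechanism, which is the mathematical content of the proposition.
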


\begin{proof}
Since $A[f^{-1}]$ is a finitely generated $R$-algebra, by Theorem \ref{onoda}(1),
it is enough to show that $A_M$ is a locality (essentially of finite type) over $R$
for every maximal $M$ of $A$.
Let $M$ be a maximal ideal of $A$. Since $A[f^{-1}]$ is finitely generated 
over $R$, we may assume that $f \in M$ for our consideration.

Let $P$ be a minimal prime ideal of $f$ in $A$ and $\p= P \cap R$.
Since $P$ satisfies the dimension formula relative to $R$, we have
\begin{equation}\label{e1}
\hgt P + {\rm tr.deg}_{R/\p} \, A/P = \hgt \p + {\rm tr.deg}_R \, A. 
\end{equation}
Since $R$ is an excellent ring, it is universally catenary. Hence, since, by (II),
$A/P$ is a finitely generated ($R/\p$)-algebra, we have
\begin{equation}\label{e2}
\hgt (M/P) + {\rm tr.deg}_{R/(M\cap R)} \, A/M = \hgt ((M \cap R)/\p) + {\rm tr.deg}_{R/\p} \,A/P
\end{equation}
by \cite[Theorem 15.6]{M}.
From (\ref{e1}) and (\ref{e2}), and the fact that $R$ is catenary,  we have
\begin{equation}\label{e3}
 \hgt P + \hgt (M/P) = \hgt (M \cap R) +  {\rm tr.deg}_R \, A - {\rm tr.deg}_{R/(M\cap R)} \, A/M . 
\end{equation}
By Theorem \ref{C}, we have 
\[
\hgt M \le  \hgt (M \cap R) +  {\rm tr.deg}_R \, A  - {\rm tr.deg}_{R/(M\cap R)} \, A/M.
\]
Hence by (\ref{e3}), we have
\[
\hgt P + \hgt (M/P) \ge \hgt M. 
\]
Thus, for any minimal prime ideal $P$ of $f$ 
in $A$, 
\begin{equation}\label{e4}
\hgt M = \hgt P + \hgt (M/P)= \hgt (M/P) +1, 
\end{equation}
since $\hgt P =1$.

Let $\widehat{A}$ denote the $M$-adic completion of $A_M$. 
We show that $\widehat{A}$ is a Noetherian ring 
such that $\dim \widehat{A} \ge \hgt M$.
Let $\widetilde{A}$ be the $f$-adic completion of $A_M$. 
Then $\widetilde{A}/f\widetilde{A} \cong A_M/fA_M$, which is Noetherian by hypothesis (II).
Hence $\widetilde{A}$ is Noetherian (cf. \cite[Corollary 4, p.260]{ZS}).
Since $\widehat{A}$ coincides with the $M$-adic completion of $\widetilde{A}$,
we  have $\widehat{A}$ is a Noetherian ring and 
$$
\dim \widehat{A} = \dim \widetilde{A}.
$$
On the other hand $f$ is a regular element of $\widetilde{A}$. Hence, by (\ref{e4}), 
\begin{equation}\label{e5}
\dim \widehat{A} = \dim \widetilde{A} \ge \dim \widetilde{A}/f\widetilde{A} +1 = \dim A_M/fA_M +1 = \hgt M. 
\end{equation}

Since $A[f^{-1}]$ and $A/fA$ are finitely generated $R$-algebras, there exists a 
finitely generated $R$-algebra $C$ such that 
$C[f^{-1}] = A[f^{-1}]$ and $C/ (fA \cap C) = A/fA$.
Since $f \in M$, $M$ is a finitely generated ideal of $A$ by hypothesis (II) and we may assume that
the generators of $M$ are contained in $C$, so that $M= (M \cap C)A$.
Let $B$ be the integral closure of $C$ in its field of fractions. Since $R$ is excellent
and $C$ is a finitely generated $R$-algebra, $B$ is also finitely generated over $R$.
Since $A$ is normal and birational to $C$, $B \hookrightarrow A$ and we have

\begin{enumerate}
\item[{\rm(1)}] $B[f^{-1}] = A[f^{-1}]$.
\item[{\rm(2)}] $B / (fA \cap B)= A/fA$.
\item[{\rm(3)}] $B/ \m = A/ M$ and $\m A= M$, where $\m= M \cap B $.
\item[{\rm(4)}] $B$ is a  finitely generated normal $R$-algebra.
\end{enumerate}

Let $\widehat{B}$ be the $\m$-adic completion of $B_\m$. 
Since $R$ is an excellent domain and $B$ is a finitely generated normal $R$-algebra, we have
$\widehat{B}$ is a Noetherian normal domain 
(\cite[Theorem 79, p. 258]{Ma}).
Let $\phi : \widehat{B} \to \widehat{A}$ be the canonical map induced by the inclusion
$B_\m \hookrightarrow A_M$. We show that $\phi$ is an isomorphism.

Since  $ \widehat{B}/ \m\widehat{B} = B/ \m = A/ M = \widehat{A}/M\widehat{A}$ and
$\widehat{A}$ is complete, we have $\phi$ is surjective (cf. \cite[Corollary 2, p.259]{ZS}). 
Now, since $\widehat{B}$ is an integral domain, to show that $\phi$ is injective, 
it is enough to show that $\dim \widehat{B} \le \dim \widehat{A}$.

Since $B$ is a finitely generated $R$-algebra, we have
$$\dim \widehat{B} = \hgt {\m} = \hgt (\m / Q) + \hgt {Q},$$ where 
$Q = P\cap B$ and $P$ is a minimal prime ideal of $f$ in $A$.
By (2) and (3), we have $$\hgt (\m / Q) = \hgt (M/P).$$
Using (\ref{e1}), and the facts that $Q= P\cap B$, 
$B$ is birational to $A$ such that $B/Q= A/P$ and 
$Q$ satisfies the dimension formula relative to $R$, we have $$\hgt Q= \hgt P.$$
Hence, by (\ref{e4}) and (\ref{e5}), we have
\[
\dim \widehat{B} = \hgt {\m} = \hgt (M/P) + \hgt P = \hgt M \le \dim \widehat{A}.
\]
Thus, $\phi$ is an isomorphism and we may identify $\widehat{B}$ with $\widehat{A}$. 

Since $A$ is birational to $B$, $B_\m \subseteq A_M$ and
$\widehat{B} = \widehat{A}$, we have $A_M \hookrightarrow \widehat{A}$.

Now, since $\widehat{B}$ is faithfully flat over $B_{\m}$, we have
$B_\m = \widehat{B} \cap {\rm qt}(B)$, where ${\rm qt}(B)$ denotes the field of fractions of $B$.
Therefore, since $A_M  \subseteq  \widehat{A} \cap {\rm qt}(A)$,
$\widehat{B} = \widehat{A}$ and ${\rm qt}(B) = {\rm qt}(A)$, we have $A_M=B_{\m}$. 
Therefore, $A_M$ is a locality over $R$, as desired. This completes the proof. 
\end{proof}

{\bf Acknowledgements.} 
The second author acknowledges Department of Science and Technology for their SwarnaJayanti Fellowship.

\end{document}